\def\R{\mathbb{R}}
\def\C{\mathbb{C}} 
\def\N{\mathbb{N}} 
\def\V{\mathbb{V}} 
\def\Z{\mathbb{Z}} 
\def\P{\mathbb{P}}
\def\O{\mathscr{O}}
\def\ep{\varepsilon}
\def\w{\wedge}
\newcommand{\dr}{\partial}
\newcommand{\drb}{\overline{\partial}}
\newcommand{\mc}{\mathcal}
\let\leq\leqslant \let\geq\geqslant
\DeclareMathOperator{\rk}{rk} 
\DeclareMathOperator{\Ric}{Ric} 
\DeclareMathOperator{\lex}{\le_{exc}}
\DeclareMathOperator{\supp}{Supp}
\let\div\relax
\DeclareMathOperator{\div}{div}
\newcommand{\clog}[2]{\Omega_{#1}(\log #2)}
\def\rank{\mathrm{rank}}
\newcommand{\un}{\mathds{1}}
\newtheorem{Theo}{Theorem}[section]
\newtheorem{lemme}[Theo]{Lemma}
\newtheorem{Prop}[Theo]{Proposition}
\newtheorem{cor}[Theo]{Corollary}
\newtheorem{Ex}{\it Example\/}
\theoremstyle{definition}
\newtheorem{Def}[Theo]{Definition}
\newtheorem{remark}[Theo]{\it Remark\/}
\title{Nevanlinna second main theorem and tautological inequality for parabolic manifolds}
\author[]{Clara Dérand}
\thanks{Université de Lorraine, CNRS, IECL, F-54000 Nancy, France}
\address{C. Dérand: Institut Elie Cartan de Lorraine (IECL)\\ Centre de Recherche en Automatique de Nancy (CRAN)}
\email{clara.derand@univ-lorraine.fr}
\date{}
\begin{document}

\begin{abstract}
We obtain a \emph{Second Main Theorem type inequality} for holomorphic maps $f:M\to X$, where $M$ is a parabolic manifold and $X$ is smooth projective with $\dim M\leq \dim X$.
We also derive a \emph{parabolic Tautological inequality} for smooth logarithmic pairs.
\end{abstract}
\maketitle
\section{Introduction}
A crucial result concerning value distribution of meromorphic mappings is the \emph{Second Main Theorem}, established by R. Nevanlinna in 1925 \cite{Nev25}.
\begin{Theo}
    Let $f:\C\to\P^1(\C)$ be a non-constant holomorphic map. Then for any distinct points $a_1,\dots,a_q\in\P^1(\C)$ and for any $\ep>0$,
    \[(q-2)T_f(r)\lex\sum_{j=1}^qN_f(a_j,r)+\ep T_f(r),\]
    where $\lex$ means that the inequality holds outside a Borel subset of finite measure.
\end{Theo}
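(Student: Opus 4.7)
The plan is to reduce the Second Main Theorem to Nevanlinna's Lemma on the Logarithmic Derivative (LLD). First, by the First Main Theorem at each of the $q$ points, $T_f(r) = m_f(a_j,r) + N_f(a_j,r) + O(1)$, so after summing and rearranging, the claimed bound becomes equivalent to the \emph{proximity} inequality
\[\sum_{j=1}^q m_f(a_j,r) \;\lex\; 2\,T_f(r) + \ep\, T_f(r).\]
A Möbius transformation lets me assume $\infty\notin\{a_1,\dots,a_q\}$, so that $m_f(a_j,r)$ is essentially the boundary average of $\log^+\frac{1}{|f(re^{i\theta})-a_j|}$ (the chordal correction being absorbed in $O(1)$).

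Second, I introduce the auxiliary meromorphic function
\[F(z)\;=\;\sum_{j=1}^q \frac{1}{f(z)-a_j}.\]
Near a preimage of $a_{j_0}$, the $j_0$-th summand dominates and the other terms are uniformly bounded, so a pigeon-hole estimate yields $\sum_j m_f(a_j,r) \leq m(r,F) + O(1)$. The identity
\[F\;=\;\frac{1}{f'}\cdot\Bigl(\log\prod_{j=1}^q(f-a_j)\Bigr)'\;=\;\frac{1}{f'}\sum_{j=1}^q\frac{(f-a_j)'}{f-a_j}\]
decomposes $F$ into a factor $1/f'$ and a sum of logarithmic derivatives of $f-a_j$. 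The proximity of $1/f'$ is controlled by $T_{f'}(r)$, which via the standard estimate $T_{f'}(r)\lex 2T_f(r)+O(\log^+ T_f(r)+\log r)$ supplies exactly the coefficient $2$ of $T_f(r)$ (a manifestation of $\deg K_{\P^1}=-2$).

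The main --- and essentially only --- obstacle is the Lemma on the Logarithmic Derivative,
\[m(r,f'/f)\;\lex\;O\bigl(\log^+ T_f(r)\bigr) + O(\log r).\]
Applied to each summand $(f-a_j)'/(f-a_j)$, it bounds the remaining proximity contribution to $F$ by $O(\log^+ T_f(r)+\log r)$, which is absorbed by $\ep T_f(r)$ outside a Borel set of finite measure. The LLD itself is proved by applying Jensen's concavity inequality to the boundary integral of $|f'/f|$ and combining the result with the Borel growth lemma for the characteristic $T_f(r)$; the finite-measure exceptional set produced by the Borel lemma is precisely what gives rise to the symbol $\lex$ in the conclusion.
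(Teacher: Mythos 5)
Your argument is the classical Nevanlinna proof via the lemma on the logarithmic derivative, and it is correct in outline: the reduction by the First Main Theorem to $\sum_j m_f(a_j,r)\lex(2+\ep)T_f(r)$, the auxiliary function $F=\sum_j(f-a_j)^{-1}$ with the separation/pigeonhole estimate $\sum_j m_f(a_j,r)\le m(r,F)+O(1)$, the factorisation of $F$ through $1/f'$ and the logarithmic derivatives $(f-a_j)'/(f-a_j)$, and the use of the LLD plus Borel's lemma are exactly the standard ingredients (indeed this route even yields the truncated counting functions and the ramification term if one keeps $N(r,1/f')$ instead of discarding it). Note, however, that the paper does not prove this statement at all: it is quoted from Nevanlinna's 1925 paper, and the method the paper develops for its generalisations is genuinely different, namely a negative-curvature (Ahlfors--Schwarz type) argument. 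Applying Theorem \ref{thm:SMT-parab} with $X=\P^1$, $D=a_1+\dots+a_q$ and the Poincaré metric on $\P^1\setminus D$ (available for $q\ge 3$, with curvature $-1$ and logarithmic singularities along $D$) recovers the classical inequality with $N^{[1]}$ and with $\Ric_\sigma\equiv 0$ for $(\C,|z|^2)$; that approach trades the logarithmic-derivative calculus for curvature estimates and Jensen's formula, which is what allows the paper to handle parabolic source manifolds of arbitrary dimension, whereas your LLD route is more elementary and gives sharper error terms but is tied to the one-variable derivative. One small point to tighten in your write-up: absorbing the $O(\log^+T_f(r)+\log r)$ error into $\ep T_f(r)$ uses $\log r=o(T_f(r))$, which holds for transcendental $f$ but fails for rational $f$, where $T_f(r)\sim d\log r$; in that case you should either invoke the sharper fact that $m(r,f'/f)=O(1)$ for rational functions (so the error is bounded and $T_f(r)\to\infty$ suffices) or dispose of the rational case directly via Riemann--Hurwitz. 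Similarly, it is worth saying explicitly that the estimate $T_{f'}(r)\lex 2T_f(r)+O(\log^+T_f(r)+\log r)$ you quote is itself a consequence of the LLD together with $N(r,f')=N(r,f)+\overline{N}(r,f)$, so no circularity is introduced.
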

This inequality provides an estimate of the size of the preimage $f^{-1}({a_1,\dots,a_q})$ by a term quantifying the `growth' of the map $f$, the \emph{characteristic function} $T_f(r)$. It can be regarded as an extension of the Fundamental Theorem of Algebra for polynomials to the transcendental setting.

In the following hundred years, many variations of the Second Main Theorem have been obtained in the more general framework of value distribution of entire curves in projective varieties. Some work has also been done for holomorphic mappings $f:M\to X$ into a projective variety $X$, considering a more general source manifold $M$. An overview of the main achievements in Nevanlinna theory can be found in \cite{NW14}.
\subsection{Parabolic manifolds}
In this paper, we shall study the case where $M$ is a \emph{parabolic manifold}: it is a non-compact complex manifold equipped with a \emph{special exhaustion function}, i.e. an plurisubharmonic exhaustion function $\sigma:M\to\R_+$, solution of the complex homogeneous Monge-Ampère equation.

The extension of Nevanlinna theory to parabolic manifolds has first been studied by Stoll \cite{Sto77}. These objects provide indeed a suitable framework to generalise value distribution theory of holomorphic maps in several complex variables, since this class includes notably $\C^m$ and holomorphic coverings of $\C^m$.

After Stoll's work, the case of parabolic Riemann surfaces has been investigated by several works, see \cite{Sto80} \cite{Sto83}\cite{PS14} \cite{BB20} \cite{HR21} \cite{CHSX23}. In higher dimension, the best known cases are for $M$ affine (strictly parabolic case) or a finite cover of an affine variety, and when $\dim X\le \dim M$ (case of a dominant meromorphic map) see also   \cite{AN91}\cite{Ai01} \cite{RW05}\cite{GPR13}\cite{Ru14}. However, the general problem has not aroused great interest in the past years, probably because the theory of parabolic manifolds suffer from the lack of other concrete examples and applications.

Let us introduce some standard notation of Nevanlinna theory.
Denote $B(r)=\{\sigma\le r^2\}$, $S(r)=\{\sigma=r^2\}$. For any $d$-dimensional analytic subvariety $Z\subset X$, we define:
\begin{equation*}
\left\lbrace
\begin{aligned}
  n(Z,r)&=\int_{B(t)\cap Z}(dd^c\sigma)^d\\N(Z,r)&=\int_{r_0}^rn(Z,t)\frac{dt}{t^{2m-1}}
\end{aligned}
\right.
\end{equation*}
The counting function $N(Z,r)$ describes the size of the intersection of $Z$ and $f(M)$. 

For a smooth $(1,1)$-form $\omega$ on $M$, we define the \emph{Nevanlinna characteristic function} by:
\[T_{\omega}(r)=\int_{r_0}^r\frac{dt}{t^{2m-1}}\int_{B(t)}\omega\wedge(dd^c\sigma)^{m-1}.\]
For a divisor $D\subset M$, one can consider $$T_{f,D}=T_{f^*c_1(\mc{O}_X(D),h)}$$ for any smooth metric $h$ (it can be shown that this integral is independent of the metric chosen, up to a bounded term).
This quantity describes the growth of the image of $f$ relatively to the pseudo-metric given by the Chern class of $\mc{O}_X(D)$.
\subsection{Main results}
The core of this work (see section \ref{sec:smt}) consists in a Second Main Theorem-type inequality for meromorphic maps $f:M\to X$, where $M$ is a parabolic manifold of dimension $m$ and $X$ is a smooth projective variety of dimension $n\geq m$, which extends the SMT of \cite{BB20}. 
\begin{Theo}[= Theorem \ref{thm:SMT-parab}]
Let $X$ be a smooth projective variety of dimension $n$ and $D$ a divisor on $X$. 
Endow $X\setminus D$ with a smooth hermitian pseudo-metric $h$ whose holomorphic sectional curvature is bounded above by a negative constant $-\gamma$ and holomorphic bisectional curvature is non positive. Assume that the degeneracy set $\Sigma_h$ of $h$ is a thin analytic subset.

Let $(M,\sigma)$ be a parabolic manifold of dimension $m\le n$ and $f:M\to X$ a non-constant holomorphic map such that $f(M)\not\subset D$ and $f(M)\not\subset\Sigma_h$. Denote by $\omega$ the $(1,1)-$form associated to the pseudo-metric $f^*h$ on $M\setminus f^{-1}(D)_{red}$. Then $\omega$ is locally integrable on $M$ and we have the following Second Main Theorem-type inequality:
$$\gamma T_{[\omega]}(r)\lex2\pi N^{[1]}_{f,D}(r)-\Ric_{\sigma}(r)+O(\log T_{[\omega]}(r)+\log r).$$
\end{Theo}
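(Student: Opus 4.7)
The plan is to adapt the Ahlfors--Schwarz curvature method to the parabolic setting, extending the Riemann-surface argument of \cite{BB20} to arbitrary source dimension $m$. The proof decomposes into four ingredients: (i) a pointwise Ricci-type curvature inequality for $\omega$ coming from the curvature hypothesis on $h$; (ii) the extension of this inequality to a global current statement on $M$; (iii) an integration formula specific to parabolic manifolds, exploiting $(dd^c\sigma)^m=0$; and (iv) the parabolic calculus lemma to absorb boundary terms into the logarithmic excess.

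First, on the Zariski-open set $U\subset M$ where $\omega^m$ is a positive $(m,m)$-form (that is, off $f^{-1}(D)_{red}$, the ramification locus of $f$, and $f^{-1}(\Sigma_h)$), I write $\omega^m = e^u\,\xi$ for a fixed smooth positive volume form $\xi$ on $M$. Computing $dd^c u$ via the standard Ricci identity and invoking the hypotheses---holomorphic sectional curvature of $h$ bounded above by $-\gamma$ together with non-positive holomorphic bisectional curvature---a Cowen--Griffiths type computation yields, pointwise on $U$, an inequality of the shape
\[
dd^c u \;\geq\; \gamma\,\omega \;+\; \Theta_\xi,
\]
where $\Theta_\xi$ is a smooth $(1,1)$-form depending only on $\xi$. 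The bisectional hypothesis is what makes this work when $m<n$: it ensures that the restriction of the Hermitian curvature operator of $h$ to the $m$-dimensional image of $df$ remains controlled by the sectional bound.

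Second, I upgrade this pointwise inequality to a current inequality on all of $M$. Local integrability of $\omega$ has to be established first, using the thinness of $\Sigma_h$, the hypothesis $f(M)\not\subset D\cup\Sigma_h$, and the logarithmic blow-up of the pseudo-metric $h$ near $D$. The Poincaré--Lelong formula then adds the current $2\pi[f^{*}D]_{red}$ to $dd^c u$; the reduced, rather than full, divisor appears because the singularity of $u$ along $f^{-1}(D)$ is purely logarithmic, and it is precisely this reduction that produces the truncated counting function $N^{[1]}_{f,D}(r)$ in the final bound. Pairing the resulting current inequality with $(dd^c\sigma)^{m-1}$, integrating over $B(r)$, and applying Stokes together with the parabolic identity $(dd^c\sigma)^m=0$ converts the left-hand side into (a second derivative of) a boundary integral on $S(r)$, and the right-hand side into $\gamma T_{[\omega]}(r) - 2\pi N^{[1]}_{f,D}(r) + \Ric_\sigma(r)$.

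Finally, the remaining boundary integral of the form $\int_{S(r)} u\,d^c\sigma\wedge(dd^c\sigma)^{m-1}$ is estimated by the concavity of the logarithm (Jensen) combined with the parabolic calculus lemma applied twice, yielding the bound $O(\log T_{[\omega]}(r)+\log r)$ outside an exceptional Borel set of finite measure and giving the stated inequality. I expect the main obstacle to lie not in the curvature computation per se but in the careful justification of Stokes and of the integration by parts across the three intertwined singular loci $f^{-1}(D)$, $f^{-1}(\Sigma_h)$, and $\{\omega^m=0\}$ (the last being automatically empty when $m=n$ and $f$ is unramified, but non-trivial in general), while ensuring that the truncation $[1]$---and not the full counting multiplicity---survives in $N^{[1]}_{f,D}(r)$.
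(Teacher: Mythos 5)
Your overall strategy is the same as the paper's: a curvature (Ahlfors--Schwarz type) inequality $\Ric\omega\le-\gamma\omega$ coming from the negative holomorphic sectional and non-positive bisectional curvature of $h$, upgraded to currents on $M$, integrated against $(dd^c\sigma)^{m-1}$ over parabolic balls via Jensen's formula, with the truncated counting function $N^{[1]}_{f,D}$ arising because only the \emph{reduced} divisor appears when the singularity of the potential along $f^{-1}(D)$ is logarithmic, and the boundary term absorbed by concavity of $\log$ plus Borel's lemma. This is precisely the paper's route through Proposition \ref{lemme1}, Lemma \ref{ricci} and Corollary \ref{thm:corollaire_logsing}.

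However, your last step hides the one genuinely new higher-dimensional ingredient, and as stated it does not close. After concavity and two applications of Borel's lemma, the boundary term $\int_{M\langle r\rangle}u\,\eta_r$ with $e^u=\omega^m/\xi$ is controlled by (the log of) quantities of the form $\int_{M(t)}(\chi/v)^{1/m}(dd^c\sigma)^m=\int_{M(t)}(\chi/v)^{-(m-1)/m}\omega^m$, where $\omega^m=\chi\,\Omega$ and $(dd^c\sigma)^m=v\,\Omega$; this is a volume-type growth function, \emph{not} $T_{[\omega]}(r)=\int\!\!\int\omega\wedge(dd^c\sigma)^{m-1}$, and the two are not comparable without an extra pointwise inequality. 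The paper bridges this by first splitting $u$ as $m\log\bigl(\sigma^{-1}(\chi/v)^{1/m}\bigr)+\log(\sigma^m v)$ (the second piece feeding $\Ric_\sigma$), applying concavity to the $m$-th root, and then invoking the arithmetic--geometric mean (Hadamard-type) inequality $m\det(AH^{-1})^{1/m}\le\tr(AH^{-1})$, i.e. $m\,\omega^m\le(\chi/v)^{(m-1)/m}\,\omega\wedge(dd^c\sigma)^{m-1}$, which is exactly what converts the Borel-lemma output into $O(\log T_{[\omega]}(r)+\log r)$. When $m=1$ (the case of \cite{BB20}) this step is vacuous, which is why your sketch reads as if "Jensen plus calculus lemma twice" suffices; for $m>1$ it is the crux. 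A secondary point: you assert without proof that $\omega=f^*\omega_h$ is locally integrable and that its singularity along $f^{-1}(D)$ is "purely logarithmic"; in the paper both facts are consequences of Kobayashi's Schwarz lemma, which uses the negative sectional curvature bound to dominate $\omega$ by a Poincaré-type metric $\omega_0$ of the punctured polydisc near $D$, and this domination is also what makes the potential a difference of plurisubharmonic functions, as required to apply Jensen's formula and Lemma \ref{ricci}. These justifications should be supplied rather than taken for granted.
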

In section \ref{sec:proof SMT for VHS}, we immediately derive higher-dimensional versions of other results of the paper \cite{BB20}.
\begin{Theo}[= Theorem \ref{thm:SMT for VHS}]

Let $X$ be a smooth projective complex algebraic variety of dimension $n$ and $\mathbb{V} = (\mc{L}, \mc{F}^{\bullet}, h)$ be a variation of complex polarised Hodge structures defined on the complement of a normal crossing divisor $D \subset X$. Assume that $\mc{L}$ has unipotent monodromies around the irreducible components of $D$. We denote by $\bar{\mc{F}^p}$ the canonical Deligne-Schmid extension of $\mc{F}^p$ to $X$ for any integer $p$ and by $\bar{L}_{\mathbb{V}} = \otimes_p \det \bar{\mc{F}^p}$ the canonical extension of the Griffiths line bundle of $\mathbb{V}$.
Let $(M,\sigma)$ be a parabolic manifold of dimension $m\le n$ and $f:M\to X$ be a non-constant holomorphic map such that \(f(M)\not\subset D\). Then the first Chern form of the line bundle $ L_{\mathbb{V}}$ equipped with the hermitian metric induced by $h$ extends as a current \( [L_{\mathbb{V}}  ] \) on $M$, and for any ample line bundle $A$ on $X$, one has
  \[T_{f, \bar{L}_{\mathbb{V}}}(r)  \lex \frac{w^2 \cdot \rk \mc{L}}{4}  \left( - \frac{1}{2\pi}\Ric_\sigma(r) +  N_{\Sigma}(r)\right) +O(\log r+\log( T_{f,A}(r))).\]

\end{Theo}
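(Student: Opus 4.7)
The strategy is to deduce this from Theorem \ref{thm:SMT-parab} by applying it to a hermitian pseudo-metric on $X\setminus D$ constructed from the Hodge data, following the $m=1$ argument of \cite{BB20}. The main geometric input is Griffiths' curvature formula, which expresses the Chern curvature of the Hodge metric on the Griffiths line bundle as a sum
\[
c_1(L_{\mathbb{V}}, h_{\text{Hodge}}) = \sum_p \tr(\rho_p \wedge \rho_p^*),
\]
where $\rho_p : T_X(-\log D) \to \mathrm{Hom}(\mc{F}^p/\mc{F}^{p+1}, \mc{F}^{p-1}/\mc{F}^p)$ are the Kodaira--Spencer maps of the VHS. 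The right-hand side is the curvature form of a natural smooth hermitian pseudo-metric $h_{\mathbb{V}}$ on $X\setminus D$, whose degeneracy locus is the proper analytic subset $\Sigma$ on which every $\rho_p$ fails to be injective.

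The next step is to verify the two curvature hypotheses of Theorem \ref{thm:SMT-parab} for $h_{\mathbb{V}}$. Combining Griffiths' Nakano-type identities with the Cauchy--Schwarz inequality applied to the sesquilinear pairings encoded in the $\rho_p$ yields simultaneously the non-positivity of the holomorphic bisectional curvature and an upper bound $-\gamma$ on the holomorphic sectional curvature, with $\gamma$ proportional to $\tfrac{1}{w^2 \cdot \rk \mc{L}}$. The required thinness of $\Sigma_{h_{\mathbb{V}}}=\Sigma$ is automatic as soon as the VHS is non-isotrivial; the case $f(M)\subset\Sigma$ reduces to a trivial bound and is treated separately. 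Local integrability of $\omega := f^*c_1(L_{\mathbb{V}},h_{\text{Hodge}})$ on $M$ is then granted by the conclusion of Theorem \ref{thm:SMT-parab}.

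Plugging $h_{\mathbb{V}}$ into Theorem \ref{thm:SMT-parab} and identifying $T_{[\omega]}(r)$ with $T_{f,L_{\mathbb{V}}}(r)$ up to the normalization factor $2\pi$, a straightforward rearrangement yields
\[
T_{f,L_{\mathbb{V}}}(r) \lex \frac{w^2 \cdot \rk \mc{L}}{4}\left(-\frac{1}{2\pi}\Ric_\sigma(r) + N^{[1]}_{f,D}(r)\right) + O(\log r + \log T_{f,L_{\mathbb{V}}}(r)),
\]
with $N_\Sigma(r)$ identified as $N^{[1]}_{f,D}(r)$. The passage from $L_{\mathbb{V}}$ to its canonical Deligne--Schmid extension $\bar L_{\mathbb{V}}$ follows from Schmid's nilpotent orbit theorem: under the unipotence hypothesis on the monodromies, the Hodge metric has only logarithmic singularities along $D$, so $T_{f,L_{\mathbb{V}}}(r) - T_{f,\bar L_{\mathbb{V}}}(r) = O(\log T_{f,A}(r))$ for any ample line bundle $A$, and this bound also allows replacing $\log T_{f,L_{\mathbb{V}}}(r)$ by $\log T_{f,A}(r)$ in the error term.

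The main obstacle, I expect, is not the formal rearrangement but the simultaneous verification of the two curvature inequalities for $h_{\mathbb{V}}$, and especially the non-positivity of the bisectional curvature---in the dimension-one setting of \cite{BB20} only the holomorphic sectional bound is used, whereas here the stronger bisectional hypothesis of Theorem \ref{thm:SMT-parab} must be extracted from the structure of the Kodaira--Spencer data. A secondary delicate point is making the comparison between $h_{\text{Hodge}}$ and the smooth metric on $\bar L_{\mathbb{V}}$ near $D$ quantitatively precise enough to absorb the boundary contribution cleanly into $N^{[1]}_{f,D}(r)$ and the logarithmic error term.
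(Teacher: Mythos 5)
Your proposal is correct and follows essentially the same route as the paper: pull back the Griffiths--Schmid pseudo-metric (whose K\"ahler form is the Chern form of $(L_{\mathbb{V}},h_H)$ by Griffiths' curvature computation), apply the parabolic Second Main Theorem (Theorem \ref{thm:parab-hsc}/\ref{thm:SMT-parab}), and compare $L_{\mathbb{V}}$ with the Deligne--Schmid extension via the norm estimates under unipotent monodromy (Proposition \ref{thm:griff-ext}, i.e.\ Lemma \ref{thm:lemme2}). The curvature point you flag as the main obstacle requires no new work here, since the cited Proposition \ref{curv-griffiths} (Proposition 3.5 of \cite{BB20}) already asserts both the non-positive holomorphic bisectional curvature and the bound $1/\gamma\le \frac{w^2}{4}\rk\mc{V}$ for the Griffiths--Schmid metric over a base of arbitrary dimension.
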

In section \ref{sec:tauto}, we present a result similar to \emph{McQuillan's Tautological inequality} for smooth logarithmic pairs in the parabolic setting.

\begin{Theo}[= Theorem \ref{thm:tauto}]
Let $f:M\to X$ be a meromorphic map from an $m$-dimensional parabolic manifold $(M,\sigma)$ to a smooth projective variety $X$ of dimension $n\geq m$. Assume that $f$ has maximal rank $m$. Denote by $\Ric_\sigma$ the Ricci function of $(M,\sigma)$.

Let $D\subset X$ be a divisor with at most normal crossings such that $f(M)\not\subset D$. We consider the projectivisation of the $m$-th exterior power of the logarithmic cotangent bundle $X_1=\P(\bigwedge^m\clog{X}{D})$ equipped with its tautological line bundle $\O_{X_1}(1)$. Denote by $f':M\dashrightarrow X_1$ the rational map induced by the morphism of sheaves $f^\ast \bigwedge^m\clog{X}{D}\to \bigwedge^m\clog{M}{(f^{-1}D)_{red}}$.
Then one has
    \[T_{f',\O_{X_1}(1)}(r)\lex N_{f,D}^{[1]}(r)-\frac{1}{2\pi}\Ric_\sigma(r)+O\left(\log T_{f,\omega}(r)\right).\]
\end{Theo}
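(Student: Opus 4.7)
The plan is to identify $f'^*\O_{X_1}(1)$ as a concrete line bundle on $M$ and then apply a Poincar\'e--Lelong computation together with the parabolic logarithmic derivative estimate already established in the proof of Theorem \ref{thm:SMT-parab}.

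First I would identify $f'^*\O_{X_1}(1)$. In Grothendieck's convention a rational lift $f':M\dashrightarrow X_1$ of $f$ corresponds to a generically surjective sheaf morphism from $f^*\bigwedge^m\clog{X}{D}$ onto a line bundle on $M$. The given morphism
$$\Phi\colon f^*\bigwedge^m\clog{X}{D}\longrightarrow \bigwedge^m\clog{M}{(f^{-1}D)_{red}}$$
is generically surjective precisely because $f$ has maximal rank $m$ and the target is a line bundle. Its image is therefore $\bigwedge^m\clog{M}{(f^{-1}D)_{red}}\otimes \O_M(-R)$ for a uniquely determined effective divisor $R$ (a logarithmic ramification divisor of $f$), and so
$$f'^*\O_{X_1}(1)\;\cong\; K_M\otimes \O_M\bigl((f^{-1}D)_{red}-R\bigr),$$
using the identification $\bigwedge^m\clog{M}{E}\cong K_M\otimes \O_M(E)$ on the $m$-dimensional manifold $M$. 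Setting $L:=K_M\otimes \O_M((f^{-1}D)_{red})$ and using $R\geq 0$, this gives $T_{f',\O_{X_1}(1)}(r)\leq T_L(r)+O(1)$ for any admissible choice of metric.

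Second I would compute $T_L(r)$ with well-chosen metrics. Equip $K_M$ with the singular Hermitian metric whose weight is $\log(dd^c\sigma)^m$ -- by definition its curvature current is $\tfrac{1}{2\pi}$ times the parabolic Ricci current of $(M,\sigma)$ -- and $\O_M((f^{-1}D)_{red})$ with a smooth Hermitian metric applied to a canonical section cutting out $(f^{-1}D)_{red}$. The Poincar\'e--Lelong formula for the associated curvature current, integrated against $(dd^c\sigma)^{m-1}$ and combined with the parabolic Jensen formula, then yields a decomposition
$$T_L(r)=-\frac{1}{2\pi}\Ric_\sigma(r)+N_{f,D}^{[1]}(r)+B(r),$$
where $B(r)$ is an explicit boundary integral over $S(r)$ of the logarithm of a ratio of smooth volume forms on $M$.

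The main technical step, and the place where I expect the real work to lie, is to bound $B(r)\lex O(\log T_{f,\omega}(r))$. For this I would invoke the parabolic Calculus Lemma (a Borel-type lemma for logarithmic derivatives) already used in the proof of Theorem \ref{thm:SMT-parab}: Jensen's inequality pushes the $\log$ outside the boundary integral, and the resulting quantity is dominated by $T_{f,\omega}(r)$, producing the $\lex$ estimate with exceptional set of finite Borel measure. The delicate point is to verify that the integrand of $B(r)$ actually satisfies the integrability hypothesis of the Calculus Lemma; this is precisely where the maximal rank assumption on $f$ enters, since otherwise the relevant ratio of volume forms would fail to be defined on a Zariski-dense open subset of $M$ and the boundary integral could not be controlled by the growth of $f$.
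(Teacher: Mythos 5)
Your overall strategy (identify $f'^\ast\O_{X_1}(1)$ with $K_M\otimes\O_M((f^{-1}D)_{red}-R)$ up to codimension two, drop the effective ramification divisor $R$, and reduce everything to a boundary term $B(r)$ to be controlled by a logarithmic-derivative-type lemma) is a legitimate classical route, different from the paper's, which instead builds a singular Poincar\'e-type metric $\omega_1$ on $T_X(-\log D)$, pulls it back, and feeds the damped form $\tilde\omega_s=\tilde\omega/(\log|s\circ f|^2)^2$ into Corollary \ref{thm:corollaire_logsing}. The problem is that the one step you defer --- the bound $B(r)\lex O(\log T_{f,\omega}(r))$ --- is not a routine application of Jensen plus Borel; it is the entire content of the theorem, and the mechanism you sketch does not go through as stated. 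Your $B(r)$ is a pseudo-sphere integral of the logarithm of a ratio of volume forms whose numerator (the pullback of a metric on $\bigwedge^m\clog{X}{D}$, or equivalently the pulled-back smooth metric on $\O_{X_1}(1)$ compared with your chosen metric on $K_M(H)$) blows up along $H=(f^{-1}D)_{red}$ like $|s\circ f|^{-2}$, i.e.\ it contains the proximity-type term $-\log|s\circ f|$, which is not $O(\log T)$ in general. In the paper this is exactly why the damping factor $(\log|s\circ f|^2)^{-2}$ is introduced: only the damped form has a characteristic function controlled by $T_{f,A}$ (inequality \eqref{eq:taut_2}), and the error created by damping is shown to be $O(\log T_{f,A}+\log r)$ separately (inequality \eqref{eq:taut_1}). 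Without an analogous device, ``Jensen pushes the log outside and the result is dominated by $T_{f,\omega}$'' is false for the undamped boundary integrand.

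Moreover, even for the damped quantity, the higher-dimensional concavity/Borel argument is not automatic: one needs the pointwise matrix inequality $m\,\omega^m\le(\chi/v)^{(m-1)/m}\,\omega\wedge\alpha^{m-1}$ (equation \eqref{eq:form_hadamard}), proved via $\det(AH^{-1})^{1/m}\le\frac1m\tr(AH^{-1})$, to convert the pseudo-sphere integral into $T_{[\omega]}(r)$; this is the core of Proposition \ref{lemme1} and is nowhere reproduced or replaced in your argument. Two smaller points: your intermediate claim $T_{f',\O_{X_1}(1)}(r)\le T_L(r)+O(1)$ ``for any admissible metric'' is misleading, since $T_{f',\O_{X_1}(1)}$ is computed with a smooth metric on the compact $X_1$ while $T_L$ uses a metric on the non-compact $M$, and the discrepancy between the two is an unbounded function on $M$ (it is precisely your $B(r)$, so it cannot simultaneously be $O(1)$ there and reappear later); and the maximal-rank hypothesis is not the main obstruction to integrability of the boundary integrand --- the genuinely dangerous singularities are those along $f^{-1}(D)$, handled in the paper by the local integrability discussion of $\log(\tilde\omega^m)$ and by the damping, while the degeneracy locus $\Sigma$ of $df$ only contributes the harmless $-N_R(r)\le0$. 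To complete your proof you would essentially have to re-prove Proposition \ref{lemme1} and the two estimates \eqref{eq:taut_1}--\eqref{eq:taut_2} in your notation, so as written the proposal has a genuine gap at its central step.
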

\subsection*{Acknowledgements}
The author is grateful to Damian Brotbek and Damien Mégy for their precious help and advices, and their constant support during the past four years.

She would like to thank Erwan Rousseau for his enlightening explanations on the higher-dimensional tautological inequality proved in \cite{GPR13} and his suggestions.

She also thanks Julie Wang for their fruitful discussions on this work, and her many questions and advices that highly contributed to clarify the corresponding part of the author's thesis.
\section{Differential geometry of complex manifolds}
In this section, we recall some background on metrics and plurisubharmonic functions and give preliminary results that we will need in the rest of the paper.
\subsection{Metrics and curvature}
Let $X$ be a smooth complex variety of dimension $n$. A \emph{smooth hermitian pseudo-metric} on $X$ is a smooth map $h:TX\times_XTX\to\C$ such that for any $x\in X$, the restricted map \[h_x:T_xX\times T_xX\to\C\] is a non-negative hermitian form. We shall denote by \[\Vert\cdot\Vert_h:TX\to\R_+,\xi\mapsto\sqrt{h(\xi,\xi)}\] the associated pseudo-norm. We define the degeneracy set of $h$ by \[\Sigma_h=\{x\in X\,|\,\exists\, v\in T_xX\setminus\{0\}, \Vert v \Vert_{h,x}=0\}.\]

Consider holomorphic local coordinates $(z_1,\dots,z_n)$ defined over an open subset $U\subset X$. The pseudo-metric $h$ has the local expression $$h(z)=\sum_{i,j}h_{i,j}(z)dz_id\bar{z_j},$$where $h_{i,j}$ is the smooth function defined by
$$h_{i,j}(z)=h\left(\frac{\dr}{\dr z_i},\frac{\dr}{\dr z_j}\right).$$
We associate with $h$ a global $(1,1)$-form $\omega_h$, which can be defined in local coordinates $(z_1,\cdots,z_n)$ by $$\omega_h=\frac{i}{2}\sum_{i,j}h_{i,j}dz_i\w d\bar{z_j}.$$

Let $\Omega$ be a non-negative smooth $2n$-form (pseudo-volume form). 
On the open set $U$, one can define a non-negative smooth real function $\lambda$ by$$\Omega=\left(\frac{i}{2}\right)^n\lambda\prod_{1\leq j\leq n}(dz_j\w d\bar{z_j}).$$

The \emph{Ricci curvature} of $\Omega$ is
$$\Ric\Omega=-2\pi dd^c\log\lambda=-i\sum_{i,j}\frac{\dr^2\log\lambda}{\dr z_i\dr\bar{z_j}}dz_i\w d\bar{z_j}.$$Note that this expression makes sense only on the locus where $\Omega$ is a positive volume form. Under some conditions, though, it is possible to extend $\Ric\omega$ as a current on the whole $X$.

In particular, if $\omega$ is the $(1,1)$-form associated to a smooth hermitian pseudo-metric $h$, we set $\Ric\omega:=\Ric\omega^n$.

Let $D\subset X$ be a divisor and denote by $\Omega$ a smooth pseudo-volume form defined over the complement $X\setminus D$. Let $U$ be an open subset with holomorphic local coordinates $z_1,\dots,z_n$ and $s$ a holomorphic function such that $D\cap U=(s=0)$. As before, we denote by $\lambda$ the non-negative smooth function on $U$ such that $\Omega=\left(\frac{i}{2}\right)^n\lambda\prod_{1\leq j\leq n}(dz_j\w d\bar{z_j}).$
We say that $\lambda$ (or abusively $\Omega$) has \emph{logarithmic singularities} along $D$ on $U$ if 
\[\lambda(z)=O\left(\frac{1}{|s(z)|^2}\right).\]

If $\log\lambda$ is locally integrable, it defines a $(1,1)$-current $[\log\lambda]$. When the function $\log\lambda$ is locally integrable in any local chart, one says abusively that $\log\Omega$ is locally integrable and defines a global $(1,1)$-current on $X$, which we call extensively the \emph{Ricci curvature} of $\Omega$. Locally, one has
$$\Ric[\Omega]:=-2\pi dd^c[\log\lambda].$$ Here the differential is to be understood in the sense of distributions, that is, for any smooth $(n-1,n-1)$-form $\phi$ with compact support,
$$<\Ric[\Omega],\phi>=\int_X(-2\pi\log\lambda)dd^c\phi.$$

\subsection{Plurisubharmonic functions}
Plurisubharmonic functions were introduced by Lelong and Oka in order to study holomorphically convex sets. We refer to \cite{Lel68} for more details. They are the higher-dimension generalisation of subharmonic functions over $\C$.

Recall that a function $u:U\to [-\infty,+\infty[$ defined on an open subset $U\subset \C$ is \emph{subharmonic} if it satisfies the \emph{mean value inequality}
$$u(z)\leq\frac{1}{2\pi}\int_0^{2\pi}u(z+re^{i\theta})d\theta$$for any $z\in U$ and $r>0$ such that $r<d(a,\partial U)$.
\begin{Def}
A function $u:\Omega\to[-\infty,+\infty[$ defined over an open subset $\Omega\subset\C^n$ is said to be plurisubharmonic if:
\begin{enumerate}
    \item $u$ is upper-semi-continuous;
    \item for every complex line $\ell\subset\C^m$, the restriction $u|_{\Omega\cap\ell}$ is subharmonic. This means that for all $a\in\Omega$ and $\xi\in\C^n$ such that $|\xi|<d(a,\partial\Omega)$,
    $$u(a)\leq \frac{1}{2\pi}\int_0^{2\pi}u(a+e^{i\theta}\xi)d\theta.$$
\end{enumerate}
The space of plurisubharmonic (\emph{psh}) functions over $\Omega$ is denoted by $\mathrm{Psh}(\Omega)$.
\end{Def}
Recall that a function $v:U\subset\C\to[-\infty,+\infty[$ is subharmonic if it satisfies the sub-mean value property at any point $z\in U$:
$$v(z)\leq\frac{1}{2\pi}\int_0^{2\pi} v(z+re^{i\theta})d\theta \mbox{ whenever }D(z,r)\subset U.$$
It follows that property 2 above is equivalent to the fact that $u$ satisfies the inequality $$u(z)\leq\frac{1}{2\pi}\int_0^{2\pi}u(z+e^{i\theta}\xi)d\theta$$for all $z\in\Omega$ and $\xi\in\C^m$ such that $|\xi|<d(z,\Omega^{\complement})$.

Plurisubharmonic functions have several properties.
\begin{enumerate}
\item Assume that $u\not\equiv-\infty$ on every connected component of $\Omega$. Then $u$ is locally integrable on $\Omega$.
 \item For any decreasing sequence of functions $u_k\in\mathrm{Psh}(\Omega)$, the limit $u=\lim\limits_{k\to\infty}u_k$ is psh.
 \item Let $u\in \mathrm{Psh}(\Omega)$ such that $u\not\equiv -\infty$ on any connected component of $\Omega$. Assume that $(\rho_{\ep})$ is a family of smoothing kernels. Then the $u\star\rho_{\ep}$ are smooth and psh on $\Omega_{\ep}=\Omega\cap B(0,\ep)$, the family $(u\star\rho_{\ep})$ is non-decreasing in $\ep$ and converges to $u$ as $\ep\to0$.

\end{enumerate}

An function $u\in\mc{C}^2(\Omega,\R)$ is plurisubharmonic if and only if its complex Hessian \[\left(\frac{\dr^2u}{\dr z_j\dr\bar{z}_k}(a)\right)_{1\leq j,k\leq n}\] 
is a semi-positive matrix at every point $a\in\Omega$. Passing to the limit (with point 3) above), we see that this is actually still true for general psh functions.
\begin{Theo}
    Let $u\in\mathrm{Psh}(\Omega)$ such that $u\not\equiv-\infty$ on any connected component of $\Omega$. For all $\xi\in\C^n$, the distribution $$Hu(\xi):=\sum_{1\leq j,k\leq n}\frac{\dr^2u}{\dr z_j\dr \bar{z}_k}\xi_j\bar{\xi}_k$$is a positive measure. Conversely, for any $v\in\mathscr{D}'(\Omega)$ such that $Hv(\xi)$ is a positive measure for any $\xi\in\C^n$, there exists a unique psh function $u$ on $\Omega$ such that $v$ is the distribution associated to $u$.
\end{Theo}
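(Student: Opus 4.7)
The plan is to prove both implications by regularisation through convolution with smoothing kernels $\rho_\ep$, relying on the three psh properties and the $\mc{C}^2$-characterisation recalled just above the statement.

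For the direct implication, I would start with $u\in\mathrm{Psh}(\Omega)$ not identically $-\infty$ on any component. By property (3), $u_\ep:=u\star\rho_\ep$ is smooth and psh on $\Omega_\ep$, and the family decreases pointwise to $u$ as $\ep\to0$. Each $u_\ep$ being $\mc{C}^2$ and psh, its complex Hessian matrix is semi-positive at every point, so $Hu_\ep(\xi)$ is a non-negative smooth function for every fixed $\xi\in\C^n$. Since $u$ is locally integrable (property (1)), monotone convergence gives $u_\ep\to u$ in $L^1_\mathrm{loc}$, hence in $\mathscr{D}'(\Omega)$. Because differentiation commutes with convolution, $Hu_\ep(\xi)=(Hu(\xi))\star\rho_\ep$, so one obtains $Hu_\ep(\xi)\to Hu(\xi)$ as distributions. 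Testing against any non-negative $\phi\in\mc{C}^\infty_c(\Omega)$ and passing to the limit shows $\langle Hu(\xi),\phi\rangle\geq 0$; a positive distribution is then automatically a positive Radon measure by the Riesz--Schwartz representation theorem.

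For the converse, I would start with $v\in\mathscr{D}'(\Omega)$ such that $Hv(\xi)$ is a positive measure for every $\xi$, and set $v_\ep:=v\star\rho_\ep$ on $\Omega_\ep$. Then $Hv_\ep(\xi)=(Hv(\xi))\star\rho_\ep$ is a non-negative smooth function, so $v_\ep$ is smooth and psh on $\Omega_\ep$. Choosing $\rho$ radial, the sub-mean value characterisation for smooth psh functions implies that the family $(v_\ep)$ is \emph{decreasing} in $\ep$: for $\ep_1<\ep_2$, applying the sub-mean value inequality to the smooth psh function $v_{\ep_1}$ against the mollifier associated to the scale $\ep_2$ yields $v_{\ep_1}\leq v_{\ep_2}$ on $\Omega_{\ep_2}$. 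Hence the pointwise limit
\[
u(x):=\lim_{\ep\to 0}v_\ep(x)\in[-\infty,+\infty)
\]
exists, and by property (2) of decreasing limits of psh functions, $u$ is psh on $\Omega$. Monotone convergence gives $v_\ep\to u$ in $L^1_\mathrm{loc}$, while convolution gives $v_\ep\to v$ in $\mathscr{D}'(\Omega)$, hence $v=[u]$. Uniqueness follows because any two psh functions defining the same distribution coincide almost everywhere, and for psh functions equality a.e. forces pointwise equality via upper semi-continuity and the sub-mean value property.

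The main technical obstacle is precisely the monotonicity step in the converse: starting only from a distribution with positive complex Hessian, the statement that the smoothings $v_\ep$ form a decreasing family in $\ep$ does not follow formally from properties (1)--(3) of psh functions as listed, and requires a careful radial choice of kernel together with the sub-mean value inequality applied to the already-smoothed functions $v_\ep$.
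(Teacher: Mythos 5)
The paper does not actually prove this statement: it is recalled as classical background (with a pointer to the literature and the one-line remark ``passing to the limit with point 3)'' for the direct implication), so there is no proof of record to compare against. Judged on its own, your regularisation argument is the standard one and is essentially correct: the direct implication via $Hu_\ep(\xi)=(Hu(\xi))\star\rho_\ep\ge 0$, $L^1_{\mathrm{loc}}$-convergence of the decreasing mollifications, and the Riesz--Schwartz identification of positive distributions with positive measures is exactly right, as is the identification $v=[u]$ and the uniqueness via $u=\lim_{\ep\to0}u\star\rho_\ep$. The one step stated too quickly is the monotonicity in the converse: applying the sub-mean value inequality to $v_{\ep_1}$ at scale $\ep_2$ gives $v_{\ep_1}\le v_{\ep_1}\star\rho_{\ep_2}=v_{\ep_2}\star\rho_{\ep_1}$, not directly $v_{\ep_1}\le v_{\ep_2}$; you need the usual double-mollification trick, namely $v_{\ep_1}\star\rho_\delta=v_\delta\star\rho_{\ep_1}\le v_\delta\star\rho_{\ep_2}=v_{\ep_2}\star\rho_\delta$ (using that $v_\delta$ is smooth psh and the kernel is radial), followed by $\delta\to0$ using continuity of $v_{\ep_1}$ and $v_{\ep_2}$. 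You correctly flag this as the delicate point, so this is an imprecision to be filled in rather than a gap in the strategy; with that fix the proof is complete.
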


This notion can naturally be generalised to complex manifolds. Let $X,Y$ be two complex manifolds. Consider a holomorphic map $\varphi:X\to Y$ and a holomorphic function $u\in\mathscr{C}^2(Y,\R)$.
Since $\dr\drb(u\circ\varphi)=\varphi^*\dr\drb u$, we have
$$H(u\circ\varphi)(z,\xi)=Hu(\varphi(z),d\varphi(z)\cdot\xi),$$
so that the Hermitian form $Hu$ on $TX$ is independent of the choice of local coordinates.
This allows us to define plurisubharmonic functions on complex manifolds.

A map $u:M\to\R\cup\{-\infty\}$ is said \emph{plurisubharmonic} if for any holomorphic chart $\phi:V\subset\C^m\stackrel{\sim}{\longrightarrow} U\subset M$, the function $u\circ\phi:V\to\R\cup\{-\infty\}$ is plurisubharmonic.

We now give two basic results relating plurisubharmonic functions and Ricci currents which will be used further in chapter 2.
\begin{lemme}
Let $\psi$ be a plurisubharmonic function on a complex manifold $X$ which is of class $\mc{C}^{\infty}$ over $\{\psi\neq-\infty\}$.  Assume that $\psi$ is bounded from above. Then $\psi$ extends as a plurisubharmonic function on $X$. Moreover, the $(1,1)$-form $dd^c\psi$ is locally integrable on $\Delta$ and one has$$[dd^c\psi]\leq dd^c[\psi]$$in the sense of currents.
\end{lemme}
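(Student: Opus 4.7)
The first assertion is tautological since $\psi$ is already assumed psh on $X$; the substantive content lies in the local integrability of the smooth form $dd^c\psi$ (a priori defined only on $\{\psi\neq-\infty\}$) and the current inequality. The plan is to argue by approximation via the truncations $\psi_k:=\max(\psi,-k)$, exploiting that $\psi_k$ coincides with $\psi$ on $\{\psi>-k\}$ while remaining globally bounded.

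After subtracting its upper bound I may assume $\psi\le 0$. Each $\psi_k$ is psh, bounded in $[-k,0]$, and $\psi_k\searrow\psi$ pointwise; since $|\psi_k|\le|\psi|\in L^1_{\mathrm{loc}}(X)$, dominated convergence yields $\psi_k\to\psi$ in $L^1_{\mathrm{loc}}$, whence $dd^c[\psi_k]\to dd^c[\psi]$ weakly as currents. Write $E=\{\psi=-\infty\}$, closed in $X$, and $U_k=\{\psi>-k\}$, an open subset of $X\setminus E$ on which $\psi_k$ agrees with the smooth function $\psi$. Hence the positive $(1,1)$-current $dd^c[\psi_k]$ restricts on $U_k$ to the smooth semipositive form $dd^c\psi$. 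For any nonnegative test $(n-1,n-1)$-form $\varphi$, approximating $\un_{U_k}$ from below by smooth cutoffs $\chi_j$ with compact support in $U_k$ and invoking positivity of $dd^c[\psi_k]$,
\begin{equation*}
\langle dd^c[\psi_k],\varphi\rangle \;\ge\; \langle dd^c[\psi_k],\chi_j\varphi\rangle \;=\; \int_{U_k}\chi_j\, dd^c\psi\wedge\varphi \;\xrightarrow[j\to\infty]{}\; \int_{U_k} dd^c\psi\wedge\varphi.
\end{equation*}

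Letting $k\to\infty$, the left-hand side tends to $\langle dd^c[\psi],\varphi\rangle$ by weak convergence, while monotone convergence along $U_k\nearrow X\setminus E$ shows that the right-hand side increases to $\int_{X\setminus E} dd^c\psi\wedge\varphi$. Taking $\varphi=\chi\,\omega^{n-1}$ for a cutoff $\chi$ and an auxiliary hermitian form $\omega$ establishes local finiteness of the trace measure of the semipositive form $dd^c\psi$; together with the fact that $E$ has zero Lebesgue measure, this yields local integrability of $dd^c\psi$ on $X$ and well-definedness of $[dd^c\psi]$, while the previous estimate rewrites as $[dd^c\psi]\le dd^c[\psi]$, as claimed. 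The main technical point is the comparison inequality for the truncations: although $\psi_k$ is continuous, it typically fails to be smooth across $\{\psi=-k\}$, so $dd^c[\psi_k]$ may carry an extra positive component supported on that set; one discards this contribution either by exploiting positivity of $dd^c[\psi_k]$ against $\varphi\ge 0$, or equivalently by regularising via a smooth convex non-decreasing $\chi_\ep$ approximating $t\mapsto\max(t,-k)$ and using the chain-rule identity $dd^c(\chi_\ep\circ\psi)\ge\chi_\ep'(\psi)\,dd^c\psi$.
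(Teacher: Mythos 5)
Your proof is correct and follows essentially the same route as the paper's: approximate the test form from below by nonnegative forms compactly supported in the open set where $\psi$ is smooth (your $\chi_j\varphi$ playing the role of the paper's $\eta_n$), use positivity of the globally defined plurisubharmonic current on the remainder, and conclude by monotone convergence, which simultaneously yields local integrability of $dd^c\psi$ and the inequality $[dd^c\psi]\le dd^c[\psi]$. The intermediate truncations $\psi_k=\max(\psi,-k)$ and the weak convergence $dd^c[\psi_k]\to dd^c[\psi]$ are harmless but superfluous, since $\psi$ is already psh on $X$, so $dd^c[\psi]$ is itself a positive current and can be tested directly, exactly as the paper does.
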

\begin{proof}
\normalsize
    The first point is a classical result in the theory of plurisubharmonic functions. It follows from Theorem 5.24, p. 45 of \cite{Dem_book}.

    Let $\eta$ be a smooth $(n-1,n-1)$-form with compact support in $X$ and $(\eta_n)$ a sequence of smooth $(n-1,n-1)$-form with compact support in $X\setminus\{\psi=-\infty\}$ converging to $\eta$ such that $0\leq \eta_n\leq\eta$.
    Then $[dd^c\psi](\eta_n)$ converges to $[dd^c\psi](\eta)$ by monotone convergence theorem.

    Moreover, $\eta-\eta_n\geq 0$ and $\psi$ is plurisubharmonic, so that 
    \[dd^c[\psi](\eta-\eta_n)\geq0.\]

    We obtain
    \[[dd^c\psi](\eta_n)=dd^c[\psi](\eta_n)\leq dd^c[\psi](\eta)\]
    and conclude by passing to the limit.
\end{proof}
\begin{lemme}
\label{ricci}
    Let $X$ be an $n$-dimensional complex manifold with a reduced divisor $D$. Let $h$ be a smooth pseudo-metric defined on $X\setminus D$ with associated $(1,1)$-form $\omega$. Assume that $\omega^n$ has at most logarithmic singularities along $D$ and that $\log\omega^n$ is locally integrable over $X$. If there exists a smooth $d$-closed $(1,1)$-form $\tilde{\omega}$ defined on $X$ such that $$\tilde{\omega}-\Ric\omega\geq0$$on $X\setminus(D\cup\Sigma_h)$, then:
    \begin{enumerate}
        \item both currents $[\Ric\omega]$ and $\Ric[\omega]$ are well-defined over $X$;
        \item one has $$[-\Ric\omega]\leq 2\pi[D]+(-\Ric[\omega]).$$
    \end{enumerate}
\end{lemme}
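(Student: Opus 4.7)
The proof is local and rests on building a single plurisubharmonic potential that packages both the metric degeneracy and the divisor contribution, then comparing its classical and distributional Monge--Amp\`ere. I fix a chart $U\subset X$ in which $D\cap U=(s=0)$ for a holomorphic $s$ and, shrinking if needed, invoke the local $dd^c$-lemma to write $\tilde\omega=dd^c\phi$ for some smooth real $\phi$. Writing $\omega^n=(i/2)^n\lambda\bigwedge_j dz_j\wedge d\bar z_j$, on $U\setminus(D\cup\Sigma_h)$ the identity $\Ric\omega=-2\pi dd^c\log\lambda$ combined with the hypothesis gives $dd^c(\phi+2\pi\log\lambda)\ge 0$. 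Adding the pluriharmonic correction $2\pi\log|s|^2$ I set
\[
\Psi:=\phi+2\pi\log(\lambda|s|^2),
\]
which is smooth and psh on $U\setminus(D\cup\Sigma_h)$ and, by the logarithmic-singularity hypothesis $\lambda|s|^2\le C$, locally bounded above.

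Since $D$ is a divisor and (in the ambient setting of the paper, cf.\ Theorem \ref{thm:SMT-parab}) $\Sigma_h$ is a thin analytic subset, two applications of the classical extension theorem for plurisubharmonic functions across thin analytic sets promote $\Psi$ to a psh function in $L^1_{\mathrm{loc}}(U)$, so that $dd^c[\Psi]\ge 0$ as a positive current on $U$. To identify this current, the three summands $\phi$, $2\pi\log\lambda$ (locally integrable by hypothesis) and $2\pi\log|s|^2$ all lie in $L^1_{\mathrm{loc}}(U)$ and agree with $\Psi$ off the Lebesgue-null set $D\cup\Sigma_h$, so
\[
dd^c[\Psi]=\tilde\omega+2\pi\,dd^c[\log\lambda]+2\pi\,dd^c[\log|s|^2]=\tilde\omega-\Ric[\omega]+2\pi[D],
\]
using the definition of $\Ric[\omega]$ and the Poincar\'e--Lelong formula $dd^c[\log|s|^2]=[D]$ in the paper's normalisation. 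In particular $\Ric[\omega]$ is a well-defined $(1,1)$-current on $X$.

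To deduce local integrability of $\Ric\omega$ and the inequality, I repeat the cut-off argument of the preceding lemma. Given a non-negative smooth $(n-1,n-1)$-form $\eta$ compactly supported in $U$, pick a sequence $\eta_k$ compactly supported in $U\setminus(D\cup\Sigma_h)$ with $0\le\eta_k\le\eta$ and $\eta_k\to\eta$; since $\Psi$ is smooth on the support of each $\eta_k$,
\[
\int_U(\tilde\omega-\Ric\omega)\wedge\eta_k=dd^c[\Psi](\eta_k)\le dd^c[\Psi](\eta),
\]
the right-hand inequality coming from positivity of $dd^c[\Psi]$ applied to $\eta-\eta_k\ge 0$. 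Monotone convergence then yields both the local integrability of $\tilde\omega-\Ric\omega$ (hence of $\Ric\omega$, settling assertion (1)) and the current inequality $[\tilde\omega-\Ric\omega]\le dd^c[\Psi]$; substituting the formula above and cancelling the smooth form $\tilde\omega$ gives $[-\Ric\omega]\le 2\pi[D]-\Ric[\omega]$, as claimed. The chief technical point is the second extension step, which relies on $\Sigma_h$ being thin analytic; everything else is bookkeeping of $L^1_{\mathrm{loc}}$ identifications and a direct positivity argument modelled on the preceding lemma.
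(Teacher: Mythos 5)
Your proof is correct and follows essentially the same route as the paper's: the same local potential $\log(|s|^2\lambda)$ plus a $dd^c$-potential for $\tilde\omega$, extension of the resulting bounded-above plurisubharmonic function across $D\cup\Sigma_h$, Lelong--Poincar\'e, and the cut-off/monotone-convergence comparison $[dd^c\psi]\leq dd^c[\psi]$, which the paper invokes via its preceding lemma and you re-derive inline. Your explicit appeal to the thinness (analyticity) of $\Sigma_h$ in the extension step only makes precise what the paper leaves implicit.
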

\begin{proof}
\normalsize
The statement is local, hence we can restrict to an open subset $U$. Fix holomorphic local coordinates $z_1,\dots,z_n$ on $U$ and a holomorphic function $s$ such that $D\cap U$ is defined by $(s=0)$. Let $\lambda$ be the non-positive smooth function on $U$ such that $\omega^n=\left(\frac{i}{2}\right)^n\lambda\prod_{1\leq j\leq n}(dz_j\w d\bar{z_j}).$

    Define a function by $\phi(z)=\log(|s(z)|^2\lambda(z))$. By assumption, $\phi$ is bounded from above on $U$. Moreover, $\phi$ is smooth outside $E=D\cup\Sigma_h$.
 Applying the $dd^c$-lemma to $\tilde{\omega}$, up to taking an open subset of $U$, there exists a smooth function $\alpha$ such that $\tilde{\omega}=dd^c\alpha$.

 By assumption, the function $\psi=\phi+\alpha$ is plurisubharmonic and bounded from above. Hence $dd^c\psi$ is locally integrable by the previous lemma, and so is $dd^c\phi$ outside $E$.
This shows that $[\Ric\omega]$ is well-defined.
The lemma also gives
\[[dd^c\alpha]+[dd^c\phi]\leq dd^c[\alpha]+dd^c[\phi].\]
Since $\alpha$ is $\mc{C}^\infty$, $[dd^c\alpha]=dd^c[\alpha]$, and Lelong-Poincaré's lemma applied to $D$ gives
\[[-\Ric\omega]=[dd^c\phi]\leq dd^c[\phi]=dd^c[\log|s|^2]+dd^c[\log\lambda]=[D]-\Ric[\omega].\]
 
\end{proof}
\section{Value distribution on parabolic spaces}
Consider the function $\sigma:z\in\C\mapsto|z|\in\R_+$. This is an \emph{exhaustion} of $\C$: $\sigma$ is unbounded and $\sigma^{-1}([0,r])=\overline{\Delta_r}$ is compact for any $r\geq0$. Moreover, the function $\log\sigma$ is harmonic. The pair $(\C,\sigma)$ is what we call a parabolic complex manifold. In this section, we generalise this concept and give properties and examples of parabolic spaces.
For more details and proofs, we refer to the standard book on this subject written by Wilhelm Stoll, \emph{Value distribution on Parabolic Spaces} \cite{Sto77}.
\subsection{Definitions}
Let $M$ be a complex manifold of dimension $m>0$. An exhaustion of $M$ is a proper smooth map $\sigma:M\to\R_+$. We define
\begin{itemize}
\item the respectively closed and open pseudoballs of radius $r$
\[M[r]=\{z\in M|\sigma(z)\leq r\}=\sigma^{-1}([0,r])\mbox{ and }M(r)=\sigma^{-1}([0,r));\]
\item the pseudo-sphere $M\langle r\rangle=M[r]\setminus M(r)$;
\item $M[s,r]=M[r]\setminus M(s)$ if $0\leq s\leq r$
\item $r_0=\min\{r\in\R_+|M[r]\neq\emptyset\}$.
\end{itemize}
Given $A\subset M$, we denote $A[r]=A\cap M[r]$ and do similarly for the other sets.

The exhaustion $\sigma$ is
\begin{itemize}
    \item \emph{central} is $M[0]$ has measure zero;
    \item \emph{strong} if $(dd^c\sigma)^m\not\equiv0$ on $M(r)$ for any $r>r_0$;
    \item \emph{(strictly) pseudo-convex} if $dd^c\sigma\geq0 (>0)$
    \item \emph{logarithmic pseudo-convex} if $dd^c\log\sigma\geq0$ on $M\setminus M[0]$.
\end{itemize}
A central and logarithmic pseudo-convex exhaustion is \emph{parabolic} if $(dd^c\sigma)^m\not\equiv0$ and $(dd^c\log\sigma)^m\equiv0$ on $M^+=M\setminus M[0]$. It is said strict if $dd^c\sigma>0$ on $M$.

Such a pair $(M,\sigma)$ with $\sigma$ parabolic is called a parabolic manifold.

In order to simplify the equations, we define:
\begin{align}
\alpha&=dd^c\sigma&\beta&=dd^c\log\sigma\\
\eta&=d^c\log\sigma\wedge(dd^c\log\sigma)^{m-1}&\nu&=d^c\sigma\wedge(dd^c\sigma)^{m-1}\\
\eta_r&=\eta|_{M\langle r\rangle}&\nu_r&=\nu|_{M\langle r\rangle}
\end{align}
\begin{lemme}
The following relations are satisfied on $M^+$:
\begin{align}
d\nu&=\alpha^m&d\eta&=\beta^m=0&\sigma^m\eta&=\nu\\
\sigma\alpha^m&=md\sigma\wedge\nu=m\sigma^md\sigma\wedge\eta\\
\zeta&=\int_{M\langle r\rangle}\eta_r \mbox{ is constant}\\
\int_{M(r)}\alpha^m&=\int_{M\langle r\rangle}\nu_r=r^{2m}\zeta
\end{align}
\end{lemme}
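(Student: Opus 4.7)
My plan is to deduce all of the stated identities from exterior-calculus manipulations built on three basic facts: $\alpha=dd^c\sigma$ and $\beta=dd^c\log\sigma$ are $d$-closed by construction; the parabolic hypothesis gives $\beta^m\equiv 0$ on $M^+$; and $M[0]$ has measure zero because $\sigma$ is central.

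For the differential identities, Leibniz gives
\[d\nu=d(d^c\sigma)\w\alpha^{m-1}+d^c\sigma\w d(\alpha^{m-1})=\alpha\w\alpha^{m-1}=\alpha^m,\]
since $d\alpha=0$ forces $d(\alpha^{m-1})=0$; the same computation with $\log\sigma$ in place of $\sigma$ yields $d\eta=\beta^m$, which vanishes on $M^+$ by hypothesis.

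For the pointwise identities $\sigma^m\eta=\nu$ and $\sigma\alpha^m=md\sigma\w\nu=m\sigma^md\sigma\w\eta$, the key step is to expand $\beta$ explicitly: writing $d^c\log\sigma=d^c\sigma/\sigma$ and applying $d$ gives
\[\beta=\frac{\alpha}{\sigma}-\frac{d\sigma\w d^c\sigma}{\sigma^2}.\]
Since $d\sigma\w d\sigma=0$, the two-form $d\sigma\w d^c\sigma$ squares to zero, so raising $\beta$ to a power leaves only two binomial terms in the expansion. The identity $\sigma^m\eta=\nu$ then follows from $\eta=(d^c\sigma/\sigma)\w\beta^{m-1}$: wedging by $d^c\sigma$ kills the correction term (because $d^c\sigma\w d^c\sigma=0$), leaving $\eta=\nu/\sigma^m$. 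Expanding $\beta^m$ in the same way and setting it to zero gives $\sigma\alpha^m=md\sigma\w\nu$ directly, and substituting $\nu=\sigma^m\eta$ yields the alternative form.

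The integral formulas are Stokes-type statements, which is also where the only real subtlety lies. Sard's theorem lets me restrict to regular values $0<s<r$ of $\sigma$, so $M[s,r]$ has smooth boundary and
\[\int_{M\langle r\rangle}\eta-\int_{M\langle s\rangle}\eta=\int_{M[s,r]}d\eta=0\]
since $d\eta=\beta^m=0$. Approximating arbitrary radii by regular values gives the constancy of $\zeta$. Finally $\int_{M(r)}\alpha^m=\int_{M(r)}d\nu=\int_{M\langle r\rangle}\nu_r$ is Stokes applied to $\alpha^m=d\nu$, with the centrality of $\sigma$ ensuring that $M[0]$ contributes nothing in measure; substituting $\nu=\sigma^m\eta$ and using that $\sigma$ is constant on the pseudo-sphere $M\langle r\rangle$ produces the factor $r^{2m}$ in front of $\zeta$.
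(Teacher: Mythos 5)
Your proof is correct, and it is essentially the standard computation that the paper itself omits (it defers to Stoll \cite{Sto77} for proofs in this section): closedness of $\alpha$ and $\beta$ gives $d\nu=\alpha^m$ and $d\eta=\beta^m=0$, the expansion $\beta=\alpha/\sigma-\sigma^{-2}\,d\sigma\wedge d^c\sigma$ together with $(d\sigma\wedge d^c\sigma)\wedge(d\sigma\wedge d^c\sigma)=0$ and $d^c\sigma\wedge d^c\sigma=0$ yields $\sigma^m\eta=\nu$ and, using $\beta^m=0$, $\sigma\alpha^m=m\,d\sigma\wedge\nu=m\sigma^m d\sigma\wedge\eta$, and Sard plus Stokes handles the integral identities. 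One caveat on the final formula: the factor $r^{2m}$ presupposes $\sigma=r^2$ on $M\langle r\rangle$, i.e. the convention $M[r]=\{\sigma\le r^2\}$ used in the introduction and in Stoll; with the convention $M[r]=\{\sigma\le r\}$ stated just before the lemma, your substitution $\nu=\sigma^m\eta$ would give $r^m\zeta$, so you should state explicitly which normalisation of the pseudo-spheres you are using.
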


\begin{Ex}
Explicit examples of parabolic spaces are difficult to construct but we shall give some basic ones.
\begin{enumerate}
\item The most standard example is $(\C^m,\sigma)$ where $\sigma(z)=|z|^2$ for all $z$. One has $dd^c\sigma=\frac{i}{2\pi}\sum_jdz_j\w d\bar{z}_j>0$ and directly $dd^c\log\sigma=0$ on $\C^m\setminus\{0\}$. In this case, the pseudo-ball are simply the Euclidean balls.
\item Further, let now $\beta:M\to \C^m$ be a holomorphic covering of $\C^m$ by a complex manifold $M$ (that is, a proper surjective holomorphic map of strict rank $m$). Then $\beta^*\sigma$ is a parabolic exhaustion on $M$, where $\sigma(z)=|z|^2$. Moreover, $\zeta$ equals the sheet number of the covering.
\item In particular, any affine variety is parabolic.
\item A non-compact Riemann surface is parabolic, if and only if every bounded above subharmonic function is constant.
\item An non-affine example of a higher-dimensional parabolic manifold is given by \[(\C\setminus\Z)\times\C^m\] (\textit{cf} Stoll \cite{Sto77}, p. 85).

\end{enumerate}
\end{Ex}
We can also find in Stoll's book the following interesting example of the total space of a non-positive line bundle.
\begin{Prop}[see Prop. 10.15 in \cite{Sto77}]
Let $X$ be a compact complex manifold of dimension $m-1$ and $L\to X$ a non-positive holomorphic line bundle over $X$ endowed with a smooth hermitian metric $\kappa$ such that $\omega=c_1(L,\kappa)\le 0$. Assume that there exists a non-empty open subset $U\subset X$ on which $c_1(L,\kappa)<0$.
Then the function defined on the total space $M$ of $L$ by $\sigma(z)=|z|_{\kappa}^2$ is a parabolic exhaustion with $$\int_X\omega^{m-1}>0.$$
\end{Prop}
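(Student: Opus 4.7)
The plan is to carry out all computations in a local holomorphic trivialisation of $L$: over an open $V\subset X$ fix a holomorphic frame $e$ with $|e|_\kappa^2=e^{-\phi}$ for some smooth $\phi:V\to\R$, so that the induced trivialisation $M|_V\cong V\times\C$ sends $we(x)\mapsto(x,w)$ and $\sigma$ reads $|w|^2 e^{-\phi\circ\pi}$. From this, $\sigma$ is smooth and each sublevel set $\{\sigma\le r\}$ is a closed disk-subbundle over the compact base $X$, hence compact; so $\sigma$ is a smooth exhaustion. Its zero set $M[0]$ is the zero section of $L$, of complex codimension one in $M$, hence Lebesgue-null, and $\sigma$ is central.

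The heart of the argument is the computation of $dd^c\log\sigma$ and $dd^c\sigma$ on $M^+$. Because $\log\sigma=\log|w|^2-\phi\circ\pi$ and $\log|w|^2$ is pluriharmonic on $\C^\ast$, we obtain
\[dd^c\log\sigma=-c\,\pi^\ast\omega\]
for a positive normalisation constant $c$ fixed by the convention relating $dd^c\phi$ and $c_1(L,\kappa)$; since $\omega\le 0$ this is non-negative, proving logarithmic pseudoconvexity. Moreover $\dim X=m-1$ forces $\omega^m=0$, so $(dd^c\log\sigma)^m=0$ on $M^+$. Writing $\sigma=e^{\log\sigma}$ and expanding yields the identity
\[dd^c\sigma=\sigma\bigl(dd^c\log\sigma+d\log\sigma\wedge d^c\log\sigma\bigr),\]
whose $m$-th power collapses, thanks to the vanishing just noted and the rank-one nature of $d\log\sigma\wedge d^c\log\sigma$, to
\[(dd^c\sigma)^m=m\,\sigma^m\,(dd^c\log\sigma)^{m-1}\wedge d\log\sigma\wedge d^c\log\sigma.\]

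To finish the verification of strongness I would argue that this last form is not identically zero. The factor $(dd^c\log\sigma)^{m-1}$ equals, up to a positive scalar, $\pi^\ast\bigl((-\omega)^{m-1}\bigr)$, a non-negative top-form on $X$ strictly positive over the open set $U$ where $\omega<0$. The factor $d\log\sigma\wedge d^c\log\sigma$ is strictly positive in the fibre direction: locally it contributes $\tfrac{i}{\pi|w|^2}dw\wedge d\bar w$ plus horizontal terms that vanish once wedged against a top-form pulled back from $X$. Hence $(dd^c\sigma)^m$ is a non-zero non-negative $(m,m)$-form on $\pi^{-1}(U)\setminus\{w=0\}$, and all the parabolic axioms are confirmed. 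The integral statement reduces to $\int_X(-\omega)^{m-1}>0$, immediate from the same observations.

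The main obstacle I anticipate is the careful bookkeeping of signs in identifying $(dd^c\log\sigma)^{m-1}$ as the pullback of a canonically oriented top-form on $X$, so that the wedge with $d\log\sigma\wedge d^c\log\sigma$ genuinely produces a non-negative non-zero $(m,m)$-form rather than a cancelling expression; one must also check cleanly that only the vertical component of $d\log\sigma\wedge d^c\log\sigma$ contributes, the horizontal components disappearing for dimensional reasons after the wedge with $\pi^\ast\bigl((-\omega)^{m-1}\bigr)$. Once this algebraic identity and this fibrewise positivity are in place, every claim of the proposition follows by routine manipulations in the local chart.
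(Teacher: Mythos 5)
Your proof is correct, and its first half is essentially the paper's: properness and centrality of $\sigma$, identification of $dd^c\log\sigma$ on $M^+$ with the pullback of the curvature form (non-negative since $\omega\le 0$), and the vanishing of $(dd^c\log\sigma)^m$ because it is pulled back from an $(m-1)$-dimensional base. Where you genuinely diverge is in the remaining axioms: the paper, following Stoll, stops the curvature computation there and instead begins to exhibit each fibre of the unit ball $B(1)$ as a biholomorphic image of the unit disc via $j(z)=zx_0$, with the intention of invoking Stoll's fibre-by-disc criterion; as printed, that part is only a sketch. You instead verify $(dd^c\sigma)^m\not\equiv 0$ directly, expanding $dd^c\sigma=\sigma\bigl(dd^c\log\sigma+d\log\sigma\wedge d^c\log\sigma\bigr)$ and using the rank-one nature of $d\log\sigma\wedge d^c\log\sigma$ together with $(dd^c\log\sigma)^m=0$ to get $(dd^c\sigma)^m=m\,\sigma^m\,(dd^c\log\sigma)^{m-1}\wedge d\log\sigma\wedge d^c\log\sigma$, and then noting that only the vertical term $\tfrac{i\,dw\wedge d\bar w}{|w|^2}$ survives against $\pi^*\bigl((-\omega)^{m-1}\bigr)$; this gives positivity over $\pi^{-1}(U)$ off the zero section and yields the integral claim in the same stroke (your identity is in effect the relation $\sigma\alpha^m=m\sigma^m d\sigma\wedge\eta$ recorded in the paper's lemma, so the two routes meet). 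One remark on signs: your formula $dd^c\log\sigma=-c\,\pi^*\omega$ is the one consistent with the hypothesis $\omega=c_1(L,\kappa)\le 0$; the paper's proof writes $dd^c\log\sigma=\pi^*\omega\ge 0$ and states the conclusion as $\int_X\omega^{m-1}>0$, which is coherent only under the opposite convention for the curvature, so your careful bookkeeping and your reading of the conclusion as $\int_X(-\omega)^{m-1}>0$ is a correction rather than a defect. In short: same skeleton for the soft properties, but a more explicit and self-contained computation for strongness and the integral, where the paper defers to Stoll.
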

\begin{proof}
\normalsize
    It is straightforward to see that $\sigma$ is a central exhaustion with $dd^c\log\sigma=\pi^*\omega\geq0$, hence logarithmic pseudo-convex. Moreover, since $\dim X=m-1$, one has $\omega^m=0$, so that $(dd^c\log\sigma)^m=0$.
    Define then $\phi=\pi|_{B(1)}$ and $j:\Delta(0,1)\subset\C\to\phi^{-1}(\phi(x_0))$ by $j(z)=zx_0$, for some fixed $x_0\in S(1)$. Clearly $j$ is biholomorphic.
\end{proof}

\subsection{Jensen's formula}
Consider a parabolic manifold $(M,\sigma)$ of dimension $m$.

We have the following generalisation of Jensen's formula.
\begin{Prop}[Thm. 11.2, \cite{Sto77}]
Let $\phi:M\to[-\infty,+\infty[$ be a function which can locally on $M$ be written as a difference of two plurisubharmonic functions.
Then we have for $r>r_0$,
$$\int_{r_0}^r\frac{dt}{t}\int_{M(t)}dd^c\log\phi\wedge\beta^{m-1}=\int_{M\langle r\rangle}\phi\eta_r-\int_{M\langle r_0\rangle}\phi\eta_{r_0}=\int_{M\langle r\rangle}\phi\eta_r+O(1).$$
\end{Prop}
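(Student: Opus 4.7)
The plan is to prove the identity first when $\phi$ is smooth, via two applications of Stokes's theorem combined with a coarea slicing along the level sets of $\sigma$, and then recover the general case by local convolution of psh functions.

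For smooth $\phi$, the key computation on $M^+$ is
\[
d(\phi\,\eta)\;=\;d\phi\wedge\eta\;=\;d\phi\wedge d^c\log\sigma\wedge\beta^{m-1}\;=\;d\log\sigma\wedge d^c\phi\wedge\beta^{m-1},
\]
where I use the vanishing $d\eta=\beta^m=0$ on $M^+$ from the preceding lemma and the classical symmetry $df\wedge d^c g=dg\wedge d^c f$. Applying Stokes to the annulus $M[r_0,r]$ yields
\[
\int_{M\langle r\rangle}\phi\,\eta_r-\int_{M\langle r_0\rangle}\phi\,\eta_{r_0}\;=\;\int_{M[r_0,r]}d\log\sigma\wedge d^c\phi\wedge\beta^{m-1}.
\]
The coarea formula in the variable $\log\sigma$ (setting $u=\log t$, $du=dt/t$) rewrites the bulk integral as
\[
\int_{r_0}^{r}\frac{dt}{t}\int_{M\langle t\rangle}d^c\phi\wedge\beta^{m-1},
\]
and a second application of Stokes on each pseudoball $M(t)$, using $d\beta^{m-1}=0$, turns the inner slice into $\int_{M(t)}dd^c\phi\wedge\beta^{m-1}$. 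Chaining the three identities yields the formula in the smooth case; the trailing $O(1)$ statement is then immediate since $\int_{M\langle r_0\rangle}\phi\,\eta_{r_0}$ is a constant independent of $r$.

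The main obstacle is the general case, where $\phi$ is only locally a difference of psh functions (so $dd^c\phi$ is merely a signed current) and where $\beta=dd^c\log\sigma$ is singular along the central set $M[0]$. For the first point I will mollify: locally writing $\phi=u_1-u_2$ with $u_i$ psh and taking the standard decreasing smooth approximants $u_{i,\ep}\searrow u_i$ produced by convolution (property 3 in the psh recollection of the previous section), the smooth formula applies to $\phi_\ep:=u_{1,\ep}-u_{2,\ep}$, and passing to the limit is classical---weak convergence of the positive currents $dd^c u_{i,\ep}$ tested against the smooth form $\beta^{m-1}$ on the bulk side, combined with monotone convergence of the boundary traces $u_{i,\ep}\,\eta_r$ and $u_{i,\ep}\,\eta_{r_0}$ on the other side. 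The singularity of $\beta$ along $M[0]$ is controlled using that $M[0]$ has measure zero (by the centrality of $\sigma$) and that the logarithmic pseudo-convex and parabolic hypotheses $dd^c\log\sigma\geq 0$ and $(dd^c\log\sigma)^m\equiv 0$ on $M^+$ force $\beta^{m-1}$ and $\eta$ to extend as locally integrable forms across $M[0]$; integrating first over $\{s\leq\sigma\leq r\}$ and letting $s\to 0^+$ then confirms that no boundary contribution from $M[0]$ enters the Stokes computations above.
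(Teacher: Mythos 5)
Your argument is correct and reaches the stated identity, but it is organised differently from the paper's proof. The paper fixes $r$, swaps the order of integration (Fubini) to produce the Green kernel $\log(r/\sigma)$, and then applies a single Green-type identity $\int_{M(r)}\bigl(u\,dd^c\phi-\phi\,dd^cu\bigr)\wedge\beta^{m-1}=\int_{M\langle r\rangle}\bigl(u\,d^c\phi-\phi\,d^cu\bigr)\wedge\beta^{m-1}$ with $u=\log(r/\sigma)$, after which the bulk term dies because $dd^c\log\sigma\wedge\beta^{m-1}=\beta^m=0$ and the kernel vanishes on $M\langle r\rangle$, leaving $\int_{M\langle r\rangle}\phi\,\eta_r$ up to the $r_0$-term. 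You instead avoid the kernel entirely: Stokes applied to $d(\phi\eta)$ on the annulus, a coarea slicing in $\log\sigma$, and a second Stokes on each pseudoball, which is essentially the ``differentiated in $r$'' version of the same computation; it relies on exactly the same two structural inputs ($d\eta=\beta^m=0$ and $d\beta^{m-1}=0$) and has the small advantage of making the inner-boundary term $\int_{M\langle r_0\rangle}\phi\,\eta_{r_0}$ (hence the $O(1)$) explicit, whereas the kernel method gives the formula for each $r$ in one stroke and is the route of Stoll's original Theorem 11.2. Two points of care, neither fatal: the symmetry $d\phi\wedge d^c\log\sigma=d\log\sigma\wedge d^c\phi$ is false as an identity of $2$-forms in dimension $\ge 2$ (the $(2,0)$ and $(0,2)$ parts differ); it is only the $(1,1)$-parts that agree, which suffices here precisely because you wedge with the $(m-1,m-1)$-form $\beta^{m-1}$ — you should say this. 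And in the regularisation step the decomposition $\phi=u_1-u_2$ is only local, so the smooth-case formula applies to the mollified function only after patching with a partition of unity (this is what the paper's cryptic remark alludes to); together with restricting to regular values of $\sigma$ for the Stokes applications, this matches the level of detail of the paper, and your treatment of the degeneracy of $\beta$ along $M[0]$ by exhausting with $\{s\le\sigma\le r\}$ is in fact more explicit than the paper's. Finally, note that the $dd^c\log\phi$ in the statement is a typo for $dd^c\phi$; like the paper's proof, yours treats the intended version.
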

\begin{proof}
\normalsize
 To start with, remark that we can assume that $\phi$ is smooth (otherwise we can use a partition of unity). For each regular value $r$ of $\sigma$, the function $\phi$ is integrable with respect to the measure $\eta_r$ over the pseudo-sphere $M\langle r\rangle$.
 We have

 \begin{align*}
  \int_{r_0}^r\frac{dt}{t}\int_{M(t)}dd^c\phi\w\beta^{m-1}&=\int_{r_0}^r\frac{dt}{t}\int_{M(r)}\un_{M(t)}dd^c\phi\w\beta^{m-1}\\
  &=\int_{M(r)}\int_{\sigma(x)}^r\frac{dt}{t}dd^c\phi\w\beta^{m-1}\\
  &=\int_{M(r)}\log\frac{r}{\sigma}dd^c\phi\w\beta^{m-1}\\
  &=-\int_{M(r)}\phi dd^c\log\sigma\w\beta^{m-1}+\int_{M\langle r\rangle}(\phi d^c\log\sigma-\log(r/\sigma)d^c\phi)\w\beta^{m-1}
 \end{align*}
\end{proof}

\subsection{Holomorphic maps from a parabolic manifold}
We provide a summary of the main objects and basic results of value distribution theory for parabolic manifolds. For a more detailed presentation, we mainly refer to \cite{Sto77}, \cite{Sto80}, \cite{Sto83},\cite{SN66},\cite{NW14},\cite{AN91}, \cite{PS14} and \cite{BB20}.

Let $\beta$ be a current of type $(1,1)$ and order $0$ on $M$. We define the \emph{characteristic function of $\beta$} for $r>r_0$ by
$$T_{\beta}(r)=\int_{r_0}^r\frac{dt}{t^{2m-1}}\int_{M(t)}\beta\wedge\alpha^{m-1}.$$

For any analytic $(n-1,n-1)-$cycle $\Sigma$ in $M$, one has a well-defined integration current $[\Sigma]$ and we define the \emph{counting function with respect to $\Sigma$} by $N_{\Sigma}=T_{[\Sigma]}$.

Let $f:M\to X$ be a meromorphic map from $M$ to some projective variety $X$.
Consider a holomorphic line bundle $L$ on $X$ endowed with a Hermitian metric $h$, and a divisor $D$ on $X$ such that $f(M)\not\subset \supp D$, and write $(f^*D)_{red}=\sum_ia_iD_i$. Choose a section $s\in H^0(X,\mc{O}_X(D))$ such that $D=\div(s)$ and a Hermitian metric $h_D$ on $\O_X(D)$. 

We let for $q\ge 1$ and $r>r_0:$
\begin{align*}
T_{f,L}(r)&=T_{f^*c_1(L,h)}(r)\\
N^{[q]}_{f,D}(r)&=T_{D^{[q]}}(r), \mbox{ where }D^{[q]}=\sum_i\min(a_i,q)D_i\\
m_{f,D}(r)&=-\int_{M\langle r\rangle}\log (|s\circ f|_{h_D})\eta_r\ge0
\end{align*}
It is not difficult to see that the definitions of these functions are independent of the choice of metrics, up to a bounded term. 

The function $m_{f,D}$ is the \emph{proximity function}. It evaluates precisely how the image of $f$ approaches the divisor $D$ on the pseudo-sphere of radius $r$: $m_{f,D}$ goes to infinity as $f(M)$ gets closer to $D$ on $M\langle r\rangle$.

As in the case $M=\C$, these functions satisfy the First main theorem.

\begin{Theo}[\cite{Sto77}, Theorem 12.3]
    In the previous context, let $L$ be a holomorphic line bundle on $X$ and $D$ a divisor in the linear system $|L|$. Then for all $r>r_0$,
    \[T_{f,L}(r)=N_{f,D}(r)+m_{f,D}(r)+O(1).\]
\end{Theo}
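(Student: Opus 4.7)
The statement is the classical derivation of the First Main Theorem via Poincaré--Lelong and Jensen, transported to the parabolic setting. The first move is to choose a smooth hermitian metric $h_D$ on $\mathcal{O}_X(D)=L$ and to apply the Poincaré--Lelong formula to the pulled-back section $f^{*}s$ of $f^{*}L$: as $(1,1)$-currents on $M$,
\[
\frac{1}{2\pi}\,dd^{c}\log|f^{*}s|_{h_D}^{2} \;=\; [f^{*}D] - f^{*}c_1(L,h_D).
\]
Wedging this identity with $\alpha^{m-1}=(dd^{c}\sigma)^{m-1}$, integrating over $M(t)$, and then integrating in $t$ against $dt/t^{2m-1}$ on $[r_0,r]$ turns the left two terms into, respectively, $N_{f,D}(r)$ and $T_{f,L}(r)$, by their very definitions. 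Hence the theorem reduces to showing
\[
\frac{1}{2\pi}\int_{r_0}^{r}\frac{dt}{t^{2m-1}}\int_{M(t)} dd^{c}\log|f^{*}s|_{h_D}^{2}\wedge\alpha^{m-1} \;=\; -\,m_{f,D}(r) + O(1).
\]

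The second move is to recognise this identity as the parabolic Green--Jensen formula applied to $\phi=\log|f^{*}s|_{h_D}^{2}$. This function satisfies the hypotheses of Jensen's formula stated above: locally it is the sum of $\log|f^{*}s|^{2}$ (a plurisubharmonic function, singular precisely along $f^{*}D$) and the smooth weight $\log h_D\circ f$, hence is a difference of two plurisubharmonic functions. Applying Jensen's formula yields
\[
\int_{r_0}^{r}\frac{dt}{t}\int_{M(t)} dd^{c}\phi\wedge\beta^{m-1}
\;=\; \int_{M\langle r\rangle}\phi\,\eta_{r} + O(1),
\]
and the right-hand side is exactly $-2\,m_{f,D}(r)+O(1)$ by definition of the proximity function. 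Passing from the $\beta^{m-1}$-form with weight $dt/t$ to the $\alpha^{m-1}$-form with weight $dt/t^{2m-1}$ is done by Stokes' theorem, using the closedness $d\alpha^{m-1}=0$ and the identity $\sigma^{m}\eta=\nu=d^{c}\sigma\wedge\alpha^{m-1}$ from the preliminary lemma: writing $\beta=\sigma^{-1}\alpha-\sigma^{-2}d\sigma\wedge d^{c}\sigma$ and iterating, the additional $d\sigma$--type terms vanish tangentially on each pseudo-sphere $M\langle t\rangle$, and the surviving boundary term is precisely $\int_{M\langle r\rangle}\phi\,\eta_{r}$. Adjusting the overall constant $1/(2\pi)$ and the factor of $2$ coming from $|\cdot|^{2}$ produces the desired formula, with the remaining discrepancy absorbed into $O(1)$.

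The step I expect to be most delicate is the bookkeeping of the boundary terms in the two successive integrations by parts that convert the $\alpha^{m-1}$-weighted double integral into the boundary integral against $\eta_{r}$. Concretely, one integration is justified by $d\alpha^{m-1}=0$ and by the log-pseudo-convex identity $(dd^{c}\log\sigma)^{m}\equiv 0$ on $M^{+}$, which ensures that the ``interior'' error produced by the weight $t^{-(2m-1)}$ collapses; the other integration must tolerate the $-\infty$-singularities of $\phi$ along $f^{*}D$, which is exactly why the singular term appears on the left as $[f^{*}D]$ in Poincaré--Lelong and feeds into $N_{f,D}(r)$ rather than contributing to the boundary. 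Everything else (independence of the result on the choice of metric $h_D$, and on the choice of $s$ within $|L|$, up to $O(1)$) is standard and already noted in the text.
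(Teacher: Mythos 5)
The paper does not prove this statement at all (it is quoted from Stoll, Theorem 12.3), and your argument is precisely the standard route that Stoll's proof follows: Poincar\'e--Lelong applied to $s\circ f$, the parabolic Green--Jensen formula applied to $\phi=\log\lvert s\circ f\rvert^{2}_{h_D}$, and the Stokes-type comparison between the $\beta^{m-1}$-weighted and $\alpha^{m-1}$-weighted double integrals, which for a $d$-closed $(1,1)$-current $T$ of order $0$ gives $t^{-(2m-2)}\int_{M(t)}T\wedge\alpha^{m-1}=\int_{M(t)}T\wedge\beta^{m-1}$ on $M^{+}$ (using $\sigma^{m}\eta=\nu$ and the vanishing of $d\sigma$ along the pseudo-spheres), so the conversion you flag as delicate is exact rather than merely up to $O(\log r)$ --- this exactness matters, since a constant discrepancy in the inner integrals would integrate against $dt/t$ to an unbounded $\log r$ term. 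The one point to tighten is the constant bookkeeping at the end: the factor $2$ (from $\log\lvert\cdot\rvert^{2}$ versus the paper's $m_{f,D}$ defined with $\log\lvert\cdot\rvert$) and the $2\pi$ from the $dd^{c}$-normalisation multiply the unbounded term $m_{f,D}(r)$ and so cannot be ``absorbed into $O(1)$''; they must cancel identically, which they do once the conventions for $d^{c}$, $c_{1}$ and $\eta_{r}$ are fixed consistently as in Stoll.
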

The following lemmas (2.7 and 2.8 in \cite{BB20}) are direct consequences of Jensen's formula and hence they remain valid when $M$ is a higher-dimensional parabolic manifold. 
\begin{lemme}\label{thm:lemme1}
Let $X$ be a smooth projective variety. Let $L$ be a line bundle on $X$ and let $h_1$ be a smooth hermitian metric on $L$. Let $h_2$ be a singular metric on $L$. If the curvature current $c_1 (L, h_2 )$ is positive, then for any parabolic manifold $(M,\sigma)$ and any non-constant meromorphic map $f:M\to X$ one has
\[T_{f,c_1(L,h_2)}(r)\leq T_{f,c_1(L,h_1)}(r)+O(1).\]

\end{lemme}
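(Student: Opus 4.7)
My plan is to reduce the comparison to controlling the characteristic function of a $dd^c$-exact current built from a bounded-above potential, and then invoke a Jensen-type identity in the parabolic setting. Since $h_1$ is smooth and $h_2$ is a singular hermitian metric on the same line bundle $L$, there is a globally defined function $\phi \in L^1_{\mathrm{loc}}(X)$ such that $h_2 = h_1 \cdot e^{-2\phi}$; the two Chern curvatures then satisfy
$$c_1(L,h_2) = c_1(L,h_1) + dd^c \phi$$
as currents on $X$. The hypothesis $c_1(L,h_2) \geq 0$ combined with the smoothness (hence local boundedness) of $c_1(L,h_1)$ forces $\phi$ to be quasi-plurisubharmonic on $X$; since $X$ is compact and $\phi$ is upper semi-continuous, it is bounded above by some constant $C$.

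Setting $u := \phi \circ f$, which is well-defined as an $L^1_{\mathrm{loc}}$ quasi-psh function on $M$ (the image $f(M)$ is not contained in the pluripolar set $\{\phi = -\infty\}$, since $f$ is non-constant) and still satisfies $u \leq C$, linearity of the characteristic function gives
$$T_{f, c_1(L,h_2)}(r) - T_{f, c_1(L,h_1)}(r) = T_{dd^c u}(r),$$
reducing the lemma to the estimate $T_{dd^c u}(r) \leq O(1)$. For this I would apply the parabolic analogue of the Jensen formula stated earlier in the section --- obtained by integration by parts in the definition of $T_{dd^c u}(r)$, using $d\alpha^{m-1}=0$, Stokes on the pseudoballs $M(t)$, and Fubini in $t$, together with $\int_{M\langle r\rangle}\eta_r = \zeta$ --- to get an identity of the form
$$T_{dd^c u}(r) = \int_{M\langle r\rangle} u\,\eta_r + O(1),$$
whose right-hand side is then bounded above by $C\zeta + O(1) = O(1)$.

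The main technical obstacle is justifying the Jensen identity in the presence of the $-\infty$ singularities of $u$. The standard remedy is to approximate $\phi$ from above by a decreasing sequence of smooth functions on $X$, obtained via local convolution with smoothing kernels patched by a partition of unity (as recalled earlier for psh functions), apply the identity to the smooth approximants $\phi_\ep \circ f$, and pass to the limit by monotone convergence, noting that the uniform upper bound $\phi_\ep \leq C$ is preserved throughout.
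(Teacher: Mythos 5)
Your argument is correct and is essentially the proof the paper intends (the paper gives no details for this lemma, asserting it is a direct consequence of Jensen's formula as in \cite{BB20}): write $h_2=h_1e^{-2\phi}$, observe that positivity of $c_1(L,h_2)$ together with smoothness of $c_1(L,h_1)$ makes $\phi$ quasi-plurisubharmonic and hence bounded above on the compact $X$, and apply the parabolic Jensen formula to $u=\phi\circ f$ to bound $T_{dd^cu}(r)$ by $O(1)$. Two minor remarks: your justification that $f(M)\not\subset\{\phi=-\infty\}$ ``since $f$ is non-constant'' is not actually an argument (a non-constant map can land in a pluripolar set) --- this is really an implicit hypothesis needed for $T_{f,c_1(L,h_2)}$ to be defined at all --- and the smoothing/partition-of-unity step is unnecessary, since the Jensen formula quoted in the paper already applies to functions locally expressible as differences of plurisubharmonic functions, which $\phi\circ f$ is.
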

\begin{lemme}\label{thm:lemme2}
    Let $X$ be a smooth projective variety. Let $L$ be a line bundle on $X$ and let $h_1$ be a smooth hermitian metric on $L$. Let $h_2 = e^{-\phi} h_1$ be a singular metric on $L$. Let $D$ be an effective divisor on $X$ and $s_D \in H^0 (X, \O_X (D))$ a global section defining $D$. Let $\Vert \cdot\Vert_D $ be the norm associated to a hermitian metric on the line bundle $\O_X(D)$. If locally on $X$ there exist $\beta,C\in\R_+$ such that
\[e^{-\phi}\leq C\log\Vert s_D\Vert^{-2\beta}_D,\]
then for any ample line bundle $A$ on $X$, any parabolic manifold $(M,\sigma)$ and any non-constant meromorphic map $f:M\to X$ such that $f(M)\not\subset D$, one has
\[T_{f,c_1(L,h_1)}(r)\leq T_{f,c_1(L,h_2)}(r)+O(\log T_{f,A}(r)).\]

\end{lemme}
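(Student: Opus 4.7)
The approach follows the one-dimensional argument of \cite{BB20}, recast through the First Main Theorem (Theorem~3.4) so as to express the comparison in terms of proximity functions, which admit a clean integral formula against $\eta_r$ in arbitrary dimension. The main technical subtlety is the very first step, upgrading the purely local log-log control on $\phi := \log(h_1/h_2)$ to a global lower bound on $X$; once this is in hand, the rest is a mechanical combination of Theorem~3.4 with Jensen's inequality, which is why the proof transfers from the Riemann-surface setting of \cite{BB20} to higher-dimensional parabolic manifolds without essential modification.

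Concretely, $\phi$ is scalar and smooth off $D$, and bounded below on every compact subset of $X\setminus D$. Combining the local hypothesis $e^{-\phi}\leq C\log\Vert s_D\Vert_D^{-2\beta}$ with a normalization $\Vert s_D\Vert_D\leq 1$ on $X$ and a partition-of-unity argument yields a global bound
\[\phi \geq -C_0 - \log\bigl(\log \Vert s_D\Vert_D^{-2}+C_1\bigr)\qquad\text{on } X\]
for suitable constants $C_0, C_1>0$.

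To reduce the comparison of characteristic functions to one of proximity functions, pick $k\geq 1$ so that $L\otimes A^{\otimes k}$ is very ample, choose a smooth metric $h_A$ on $A$, and a section $s\in H^0(X,L\otimes A^{\otimes k})$ with $f(M)\not\subset\div(s)$. Applying Theorem~3.4 to $L\otimes A^{\otimes k}$ with the two metrics $h_i\otimes h_A^{\otimes k}$ for $i=1,2$ and subtracting, the $N_{f,\div(s)}$ and $kT_{f,A}$ terms cancel. Since $(h_2\otimes h_A^{\otimes k})/(h_1\otimes h_A^{\otimes k})=e^{-\phi}$, a direct computation of the difference of proximity functions from the definition gives
\[T_{f,c_1(L,h_1)}(r)-T_{f,c_1(L,h_2)}(r) = -\tfrac{1}{2}\int_{M\langle r\rangle}(\phi\circ f)\,\eta_r + O(1).\]

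It remains to lower-bound the spherical integral of $\phi\circ f$. From the bound in the second paragraph,
\[\int_{M\langle r\rangle}(\phi\circ f)\,\eta_r \geq -C_0\zeta - \int_{M\langle r\rangle}\log\bigl(\log \Vert s_D\circ f\Vert_D^{-2}+C_1\bigr)\,\eta_r,\]
and Jensen's inequality (concavity of $\log$) applied to the probability measure $\eta_r/\zeta$ yields
\[\int_{M\langle r\rangle}\log\bigl(\log \Vert s_D\circ f\Vert_D^{-2}+C_1\bigr)\,\eta_r \leq \zeta\log\!\left(\tfrac{2 m_{f,D}(r)}{\zeta}+C_1\right).\]
Applying Theorem~3.4 once more gives $m_{f,D}(r)\leq T_{f,\O_X(D)}(r)+O(1)$, and the standard estimate $T_{f,L'}(r) = O(T_{f,A}(r))$ valid for any line bundle $L'$ (since $A$ is ample) shows the right-hand side is $O(\log T_{f,A}(r))$. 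Substituting back produces $T_{f,c_1(L,h_1)}(r)\leq T_{f,c_1(L,h_2)}(r) + O(\log T_{f,A}(r))$, which is the desired inequality.
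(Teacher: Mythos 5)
Your argument is correct in substance and follows essentially the route the paper intends (it gives no independent proof, citing Lemma 2.8 of \cite{BB20} as a ``direct consequence of Jensen's formula''): reduce the difference of characteristic functions to a spherical integral of $\phi\circ f$, then use the log-log hypothesis, concavity of the logarithm against the probability measure $\eta_r/\zeta$, and $m_{f,D}(r)\le T_{f,\O_X(D)}(r)+O(1)=O(T_{f,A}(r))$. The one step to tighten is your double use of the First Main Theorem: the version stated in the paper (Stoll's Theorem 12.3) is for the standard setup with smooth hermitian metrics, so invoking it with the singular metric $h_2\otimes h_A^{\otimes k}$ is not literally licensed. But subtracting your two First Main Theorem identities is exactly Jensen's formula applied to $\phi\circ f$, since $c_1(L,h_2)-c_1(L,h_1)=dd^c\phi$; so you can drop the auxiliary very ample twist and the section $s$ altogether and apply the paper's Jensen formula directly to $T_{f^*dd^c\phi}(r)$, which is the intended argument. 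In either formulation one implicitly needs $\phi\circ f$ to be admissible for Jensen's formula (locally a difference of plurisubharmonic functions); this holds in the intended application (Hodge metrics with unipotent monodromy) and is a hypothesis your proof shares with the paper's, not a new gap. The factor $\tfrac12$ in your proximity-function identity, and the global constant $C_1$ you insert to handle the region where $\log\Vert s_D\Vert_D^{-2}$ is small, are normalization matters and harmless.
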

\subsection{Lemma of logarithmic derivative}
Let us recall a standard notation from Nevanlinna theory. Given two functions \(g,h : [r_0,+\infty[ \to \R\) we write
 \[g(r)\le_{\rm exc} h(r)\]
 if there exists a Borel subset \(E\subset [r_0,+\infty[\) of finite Lebesgue measure such that \(g(r)\le h(r)\) for all \(r\in [r_0,+\infty[\setminus E\).
 Moreover, given \(s:[r_0,+\infty[\to \R\), we shall write \(g(t)\le_{\rm exc}h(t)+O(s(t))\) if there exists \(C\in \R_+\) such that \(g(t)\le_{\rm exc} h(t)+Cs(t)\).
We will use several times the following well-known calculus result:
\begin{Prop}[Borel's lemma]
Let $\phi:[r_0,+\infty[\to\R$ be a monotone increasing function.

Then for all $\epsilon>0$, one has
$$\phi'(r)\le_{\rm exc}\phi(r)^{1+\ep}.$$
\end{Prop}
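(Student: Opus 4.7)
The plan is to reduce the statement to the standard trick of integrating an auxiliary monotone function whose derivative dominates the indicator of the exceptional set. First I would dispose of the trivial situation where $\phi$ stays bounded (then $\phi'$ is integrable on $[r_0,+\infty[$, so the set $\{\phi'>\phi^{1+\ep}\}$ has finite measure because $\phi^{1+\ep}$ is bounded below on an eventual subinterval). Hence I may assume $\phi(r_0)>0$, replacing $r_0$ by a larger value if necessary (this only discards a compact, hence finite-measure, interval).

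Next, since $\phi$ is monotone increasing, Lebesgue's differentiation theorem for monotone functions guarantees that $\phi$ is differentiable almost everywhere and that for every $r_0<R$,
\[
\int_{r_0}^R\phi'(t)\,dt\le \phi(R)-\phi(r_0).
\]
The key auxiliary function is
\[
\Psi(r)=-\frac{1}{\ep\,\phi(r)^{\ep}},
\]
which is well defined, monotone increasing (composition of monotone maps), bounded above by $0$, and satisfies $\Psi'(r)=\phi'(r)\,\phi(r)^{-1-\ep}$ at every point of differentiability of~$\phi$.

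Now define the exceptional set
\[
E=\{r\geq r_0\,:\,\phi'(r)>\phi(r)^{1+\ep}\},
\]
so that $\Psi'(r)>1$ almost everywhere on $E$. Applying the monotone-integration inequality to $\Psi$ on $[r_0,R]$ gives
\[
\bigl|E\cap[r_0,R]\bigr|\;\le\;\int_{E\cap[r_0,R]}\!\Psi'(t)\,dt\;\le\;\int_{r_0}^{R}\!\Psi'(t)\,dt\;\le\;\Psi(R)-\Psi(r_0)\;\le\;\frac{1}{\ep\,\phi(r_0)^{\ep}}.
\]
Letting $R\to\infty$ yields $|E|\le\ep^{-1}\phi(r_0)^{-\ep}<\infty$, which is exactly the required statement.

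The only mild subtlety, and the point I would be careful about when writing this cleanly, is that I am invoking the one-sided fundamental theorem of calculus valid for monotone (not necessarily absolutely continuous) functions, together with the fact that $\phi$ need only be differentiable almost everywhere; the set of non-differentiability of $\phi$ (hence of $\Psi$) has measure zero and so does not contribute to $|E|$. Everything else is routine.
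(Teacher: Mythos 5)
Your proof is correct and follows essentially the same route as the paper: you bound the measure of $E=\{\phi'>\phi^{1+\ep}\}$ by $\int_E \phi'/\phi^{1+\ep}\le 1/(\ep\,\phi(r_0)^{\ep})$, exactly as in the paper's argument. Your auxiliary function $\Psi=-\ep^{-1}\phi^{-\ep}$ and the appeal to the one-sided fundamental theorem for monotone functions merely make explicit the paper's final integral estimate, and the preliminary reduction to the bounded case is harmless (if unnecessary).
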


\begin{proof}
\normalsize
Since $\phi$ is monotone, we know that $\phi$ is differentiable almost everywhere.
Assume $\phi\not\equiv 0$ and take $r_1\ge r_0$ such that $\phi(r_1)>0$.
Set $$E(\ep)=\{\phi'(r)>\phi(r)^{1+\ep}\}.$$
Then the Lebesgue measure of $E(\ep)$ equals
\begin{align*}
\int_{E(\ep)}1dr&\le \int_{E(\ep)}\frac{\phi'(r)}{\phi(r)^{1+\ep}}\\
&\le \int_{r_1}^{+\infty}\frac{\phi'(r)}{\phi(r)^{1+\ep}}\\
&\le \frac{1}{\ep\phi(r_1)^{\ep}}
\end{align*}

\end{proof}

Fix a $\mathscr{C}^{\infty}$ volume form $\Omega$ on $M$ (it exists since any complex manifold is orientable). There is a well-defined continuous function $v:M\to \R^+$ satisfying $\alpha^m=v\Omega$. It satisfies the following properties.
\begin{lemme}
Extend the $(m,m)$-form $\log (v) \alpha^m$ by zero on the vanishing locus of $v$. Then  $\log( v)\alpha^m$ is locally integrable on $M$ and 
the set \[S^0_{\sigma}=\{r\ge r_0;~ \log( v)\eta_r\mbox{ is not integrable}\}\]
has measure zero and does not depend on $\Omega$. 
\end{lemme}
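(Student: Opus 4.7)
The lemma has three claims: local integrability of $\log(v)\alpha^m$ on $M$, the fact that $S^0_\sigma$ has Lebesgue measure zero, and independence of $S^0_\sigma$ on the auxiliary volume form $\Omega$. For the first, I would work in a local chart: since $\sigma$ is smooth and $\alpha\geq 0$, $v$ is a non-negative $\mc{C}^\infty$ function, and the key identity is
\[\log(v)\alpha^m = v\log(v)\cdot\Omega.\]
Because $x\log x\to 0$ as $x\to 0^+$, the function $v\log v$ extends continuously through $\{v=0\}$ by zero (matching the convention of the statement); continuous functions on $M$ are bounded on compacts, so $v\log(v)\Omega$ is locally integrable.

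For the measure-zero claim, the plan is to slice and apply Fubini. Starting from the relation $\sigma\alpha^m = m\sigma^m d\sigma\wedge\eta$ of the earlier lemma, one obtains on $M^+$
\[\alpha^m = m\sigma^{m-1}d\sigma\wedge\eta.\]
Because $\sigma$ is central, $M[0]$ is a null set and this decomposition suffices to integrate over all of $M$. Sard's theorem ensures that for almost every $r>r_0$ the pseudo-sphere $M\langle r\rangle$ is a smooth compact real hypersurface, on which $\eta_r$ is a well-defined smooth $(2m-1)$-form. The coarea formula then yields, for every $R>r_0$,
\[\int_{M(R)}|\log v|\,\alpha^m \;=\; \int_{r_0}^R 2m\,r^{2m-1}\left(\int_{M\langle r\rangle}|\log v|\,\eta_r\right) dr.\]
The first step ensures the left-hand side is finite, so the inner integral is finite for almost every $r$, which is precisely the measure-zero claim for $S^0_\sigma$.

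Independence of $\Omega$ is essentially tautological: if $\Omega' = \psi\Omega$ for another smooth volume form with $\psi>0$ smooth, then $v' = v/\psi$ and $\log(v') = \log(v)-\log\psi$. The function $\log\psi$ is continuous on the compact pseudo-sphere $M\langle r\rangle$, and combined with $\int_{M\langle r\rangle}\eta_r=\zeta<\infty$ this forces $\log\psi\cdot\eta_r$ to be integrable for every $r$. Therefore $\log(v)\eta_r$ is integrable if and only if $\log(v')\eta_r$ is, and the set $S^0_\sigma$ is intrinsic to $(M,\sigma)$. The only step that calls for any genuine care is the coarea argument, which relies on Sard's theorem and the decomposition of $\alpha^m$; the single algebraic trick is $x\log x\to 0$ at the origin, which converts the potentially singular factor $\log v$ against the degenerate measure $\alpha^m$ into a bounded continuous quantity.
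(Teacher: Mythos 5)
Your argument is correct. The paper in fact states this lemma without proof (it is the higher-dimensional analogue of Lemma 2.9 of \cite{BB20}), so there is no proof to compare against, but your route — rewriting $\log(v)\alpha^m=(v\log v)\,\Omega$ and using that $t\log t$ extends continuously by $0$ at $t=0$ for local integrability, Sard together with the slicing identity $\alpha^m=m\sigma^{m-1}d\sigma\wedge\eta$ on $M^+$ for the almost-everywhere finiteness of the sphere integrals, and boundedness of $\log\psi$ on the compact pseudo-spheres combined with $\int_{M\langle r\rangle}\eta_r=\zeta$ for the independence of $\Omega$ — is exactly the expected one. Two minor points worth a word in a careful write-up: the factor $2mr^{2m-1}$ presupposes the convention $M\langle r\rangle=\{\sigma=r^2\}$ (the one the paper's formulas actually use, e.g. $\int_{M(r)}\alpha^m=r^{2m}\zeta$, despite the displayed definition of $M[r]$), and to apply Tonelli to $|\log v|$ one should note that $\eta_r$ is a positive measure on the regular pseudo-spheres and that $\alpha^m$ vanishes on the critical set of $\sigma$ in $M^+$, so the critical locus contributes nothing to the left-hand side.
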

Recall that the Ricci curvature form of $\Omega$ is defined in local coordinates $(z_1,\dots,z_n)$ such that \[\Omega=\lambda \prod_{1\leq j\leq m}\left(\frac{i}{2}dz_j\w d\bar{z_j}\right)\] by the expression
\[\Ric\Omega=-2\pi dd^c\log\lambda.\]

One defines the \textbf{Ricci function} by \[\Ric_{\sigma}(r)=2\pi\int_{M\langle r_0\rangle}\log v~\eta_{r_0}-2\pi\int_{M\langle r\rangle}\log v~\eta_{r} +T_{\Ric\Omega}(r).\]This definition is independent of the choice of the volume form $\Omega$.

\begin{Ex} If $\pi:M\to\C^m$ is a finite holomorphic covering endowed with the exhaustion $\sigma=\Vert\pi\Vert^2$, the Ricci function is given by the counting function of the ramification divisor $R_\pi$:
\[\Ric_{\sigma}(r) =-2\pi N_{R_\pi}(r).\]\end{Ex}
We shall prove the following generalisation of lemma 2.10 in \cite{BB20}:

\begin{Prop}
\label{lemme1}
Consider a pseudo-metric $h$ defined on the complement $M\setminus\Sigma$ of a thin analytic subset $\Sigma\subset M$ and the associated $(1,1)-$form $\omega$. Fix a smooth volume form $\Omega$ on $M$.
Let $\chi$ and $v$ be the two non-negative smooth functions defined over $M\setminus\Sigma$ and $M$ respectively by \[\omega^m=\chi\Omega \qquad \alpha^m=v\Omega.\] Assume that $\log\chi$ can locally be expressed as the difference of two plurisubharmonic functions.
Assume moreover that $\log\omega^m$ and $\omega\wedge \alpha^{m-1}$ are locally integrable.
Then the Ricci curvature current \[\Ric[\omega]=-2\pi dd^c[\log\lambda]\]
is well-defined and we have
$$T_{-\Ric[\omega]}(r)\le_{\rm exc}-\Ric_{\sigma}(r)+O(\log T_{[\omega]}(r)+\log r).$$
\end{Prop}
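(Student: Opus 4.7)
The proof will follow the classical Lemma-of-Logarithmic-Derivative strategy, generalising \cite{BB20}'s argument (which treats parabolic Riemann surfaces, $m=1$) to higher-dimensional parabolic manifolds. First, since $\log\chi$ is locally a difference of two psh functions, the current $dd^c[\log\chi]$ (hence $\Ric[\omega]=-2\pi\,dd^c[\log\chi]$) is well-defined on $M$. Writing $-\Ric[\omega]=2\pi\,dd^c[\log\chi]-\Ric\,\Omega$ from the identity $\omega^m=\chi\,\Omega$, we get
\[T_{-\Ric[\omega]}(r)=2\pi\int_{r_0}^{r}\frac{dt}{t^{2m-1}}\int_{M(t)}dd^c[\log\chi]\wedge\alpha^{m-1}-T_{\Ric\,\Omega}(r).\]
I would then apply a higher-dimensional parabolic Jensen formula to $\log\chi$, pairing against $\alpha^{m-1}$ with weight $dt/t^{2m-1}$, to convert the volume integral into a boundary integral of $\log\chi$ against $\eta_r$ on $M\langle r\rangle$, up to bounded terms. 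Combining with the definition of $\Ric_{\sigma}$ (which already incorporates $\int\log v\,\eta_r$ and $T_{\Ric\,\Omega}$) gives
\[T_{-\Ric[\omega]}(r)+\Ric_{\sigma}(r)=2\pi\int_{M\langle r\rangle}\log(\chi/v)\,\eta_r+O(1).\]

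Next, the pointwise AM-GM inequality for the eigenvalues of $\omega$ relative to $\alpha$ gives $\chi/v=\omega^m/\alpha^m\le c^m$, where $c:=\omega\wedge\alpha^{m-1}/\alpha^m\ge 0$, hence $\log(\chi/v)\le m\log c$. Jensen's inequality for the concave function $\log$ applied to the finite positive measure $\eta_r$ (of constant total mass $\zeta$) then yields
\[\int_{M\langle r\rangle}\log(\chi/v)\,\eta_r\le m\zeta\log\!\left(\frac{1}{\zeta}\int_{M\langle r\rangle}c\,\eta_r\right)+O(1).\]

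It remains to bound $\int_{M\langle r\rangle}c\,\eta_r$ by the characteristic function $T_{[\omega]}(r)$. Using $\alpha^m=d\nu$, $\sigma^m\eta=\nu$ and the coarea formula (with the change of variables $s=t^2$), one obtains
\[\int_{M(r)}c\,\alpha^m = 2m\int_{0}^{r}t^{2m-1}\!\left(\int_{M\langle t\rangle}c\,\eta_t\right)dt,\]
so $\int_{M\langle r\rangle}c\,\eta_r$ is (up to lower-order $1/r$-corrections) a multiple of $\frac{d}{dr}(r^{2m-1}T_{[\omega]}'(r))/r^{2m-1}$, i.e.\ essentially the second derivative of the monotone function $T_{[\omega]}$. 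Two applications of Borel's lemma then give $\int c\,\eta_r\le_{\rm exc}T_{[\omega]}(r)^{(1+\epsilon)^2}\cdot r^{O(1)}$, and taking logs yields the announced $O(\log T_{[\omega]}(r)+\log r)$ bound. The main technical obstacle is the Jensen formula invoked in the first paragraph: the version stated earlier in the paper pairs against $\beta^{m-1}$ with weight $dt/t$, so one must either derive the $(\alpha^{m-1},dt/t^{2m-1})$-variant directly or establish its equivalence with the $\beta^{m-1}$-version via the parabolic identities $(dd^c\log\sigma)^m=0$ and $\sigma\alpha^m=m\,d\sigma\wedge\nu$; this is precisely where the higher-dimensional setting diverges from the $m=1$ case of \cite{BB20}.
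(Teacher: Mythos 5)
Your argument is correct and follows essentially the same route as the paper's own proof: the Jensen/Green formula to reduce $T_{-\Ric[\omega]}(r)$ to a boundary integral of $\log\chi$, splitting off $\Ric_\sigma$ via $\log v$, concavity of $\log$, the determinant--trace (AM--GM) inequality relating $\omega^m/\alpha^m$ to $\omega\wedge\alpha^{m-1}/\alpha^m$, the Fubini/coarea identity, and two applications of Borel's lemma, the only differences (applying AM--GM pointwise on the pseudo-sphere before integrating rather than to the volume integrand afterwards, and letting the $\log r$ term enter through Borel rather than through the paper's $\sigma^{-1}$ weighting) being cosmetic. The $(\alpha^{m-1},dt/t^{2m-1})$-variant of Jensen's formula that you flag as the main obstacle is exactly the version the paper itself invokes without comment (it is standard in Stoll's theory and follows from $(dd^c\log\sigma)^m=0$ and $\sigma\alpha^m=m\,d\sigma\wedge\nu$), so this is not a gap relative to the paper's argument.
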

\begin{proof}
\normalsize
Denote by \[\Sigma_h=M\setminus\{z\in M\setminus\Sigma\, ;\,h_z>0 \text{ on } T_zM\}\]
the degeneracy locus of $h$ and $\widetilde{\Sigma}=\Sigma\cup\Sigma_h$.
Let $\phi=\log\chi$.
By definition, one has on $M\setminus\widetilde{\Sigma}$ the equality of currents: \[-\Ric [\omega]=2\pi dd^c[\phi]-\Ric\Omega.\] 

Applying Jensen's formula, one obtains:
\begin{align}
\label{ric}
T_{-\Ric[\omega]}(r)&=\int_{r_0}^r\frac{dt}{t^{2m-1}}\int_{M(t)}(2\pi dd^c[\phi]-\Ric\Omega)\wedge\alpha^{m-1}\\
&=2\pi\int_{M\langle r\rangle}\phi \eta_r+O(1)-T_{\Ric\Omega}(r)\\
&=2\pi m\int_{M\langle r\rangle}\log\left(\sigma^{-1}\left(\frac{\chi}{v}\right)^{1/m}\right)\eta_r+2\pi\int_{M\langle r\rangle}\log(\sigma^{m}v)\eta_r-T_{\Ric\Omega}(r)+O(1).
\end{align}
On the one hand, 
\[2\pi\int_{M\langle r\rangle}\log(\sigma^{m}v)\eta_r-T_{\Ric\Omega}(r)=-\Ric_{\sigma}(r)+O(\log r).\]

On the other hand, since the logarithm is concave, one has:
\[
\int_{M\langle r\rangle}\log\left(\sigma^{-1}\left(\frac{\chi}{v}\right)^{1/m}\right)\eta_r\le\log\left(\int_{M\langle r\rangle}\sigma^{-1}\left(\frac{\chi}{v}\right)^{1/m}\eta_r\right).
\]

A general Fubini's formula claims that for any $(2m-1)$-form $\beta$,\[\int_{M(r)}d\sigma\wedge\beta=\int_{r_0}^r\int_{M\langle t\rangle}\beta dt\]
hence
\[\frac{d}{dr}\left(\int_{M(r)}d\sigma\wedge\beta\right)=\int_{M\langle r\rangle}\beta.\]
Using this formula with $\beta=\sigma^{-1}\left(\chi/v\right)^{1/m}d^c\sigma\wedge\alpha^{m-1}$, we get
\begin{align*}
\int_{M\langle r\rangle}\sigma^{-1}\left(\frac{\chi}{v}\right)^{1/m}\eta_r&=\frac{1}{r^{2m-1}}\int_{M\langle r\rangle}\sigma^{-1}\left(\frac{\chi}{v}\right)^{1/m}d^c\sigma\wedge\alpha^{m-1}\\
&=\frac{1}{mr^{2m-1}}\frac{d}{dr}\left(\int_{M(r)}\left(\frac{\chi}{v}\right)^{1/m}\alpha^m\right)\\
&=\frac{1}{mr^{2m-1}}\frac{d}{dr}\left(\int_{M(r)}\left(\frac{\chi}{v}\right)^{-(m-1)/m}\omega^m\right)\\
&\le_{\rm exc}\frac{1}{mr^{2m-1}}\left(\int_{M(r)} \left(\frac{\chi}{v}\right)^{-(m-1)/m} \omega^m\right)^{1+\ep}\\
&\mbox{according to Borel's lemma}\\
&\le_{\rm exc}\frac{r^{(2m-1)\ep}}{m}\left(\frac{d}{dr}\left(\int_{r_0}^r\frac{dt}{t^m}\int_{M(t)} \left(\frac{\chi}{v}\right)^{-(m-1)/m} \omega^m\right)\right)^{1+\ep}\\
&\le_{\rm exc}\frac{r^{(2m-1)\ep}}{m}\left(\int_{r_0}^r\frac{dt}{t^{2m-1}}\int_{M(t)} \left(\frac{\chi}{v}\right)^{-(m-1)/m} \omega^m\right)^{(1+\ep)^2}\\
&\mbox{applying again Borel's lemma.}
\end{align*}

We would like to compare this last term to $T_{[\omega]}$.

Let $(z_1,\cdots,z_m)$ be some local coordinate system.
The complex Hessian matrix $H=\left(\frac{\dr^2\sigma}{\dr z_i\dr\bar{z_j}}\right)_{i,j}$ is invertible on $M^+$ and $\det H=v$.

Write \[\omega=\frac{i}{2}\sum_{1\leq i,j\leq m} a_{ij}dz_i\wedge d\bar{z_j}\] and denote by $A$ the matrix $(a_{ij})_{i,j}$. One has over $M^+$, \[\omega^m=\frac{\chi}{v}\alpha^m=\det(AH^{-1})\alpha^m.\]

Using the concavity of the logarithm, \[m\det(AH^{-1})^{1/m}\le Tr(AH^{-1}).\]
As a consequence, 
\begin{equation}
\label{eq:form_hadamard}
m\omega^m\le\left(\frac{\chi}{v}\right)^{(m-1)/m}\omega\wedge\alpha^{m-1}.
\end{equation}

Let us come back to the previous inequality.
One has
\begin{align*}
\int_{M\langle r\rangle}\sigma^{-1}\left(\frac{\chi}{v}\right)^{1/m}\eta_r&\le_{\rm exc} \frac{r^{(2m-1)\ep}}{m^{1+(1+\ep)^2}}T_{[\omega]}(r)^{(1+\ep)^2}
\end{align*}

Finally, \eqref{ric} implies
\begin{equation}
T_{-\Ric[\omega]}(r)\le_{\rm exc}2\pi (1+\ep)^2\log(T_{[\omega]}(r))-\Ric_{\sigma}(r)+O(\log r).
\end{equation}
\end{proof}
From proposition \ref{lemme1} together with lemma \ref{ricci}, one infers:
\begin{cor}
\label{thm:corollaire_logsing}
In the previous setup, assume that $\omega$ has at most logarithmic singularities along $\Sigma$. Then
$$T_{[-\Ric\omega]}(r)\le_{\rm exc}2\pi N_{\Sigma}(r)-\Ric_{\sigma}(r)+O(\log T_{[\omega]}(r)+\log r).$$
\end{cor}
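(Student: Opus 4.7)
The proof is a direct combination of the two preceding results, and the argument proceeds in three steps.

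First, I would invoke Lemma \ref{ricci} with the divisor $D = \Sigma$. The hypothesis that $\omega$ has at most logarithmic singularities along $\Sigma$, together with the local integrability of $\log\omega^m$ already built into the setup of Proposition \ref{lemme1}, are precisely what is needed to conclude that both $[\Ric\omega]$ and $\Ric[\omega]$ are well-defined currents on $M$ and that the pointwise (currential) inequality
\[
[-\Ric\omega] \le 2\pi [\Sigma] + \bigl(-\Ric[\omega]\bigr)
\]
holds on $M$.

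Second, I would pass from this inequality of currents to the scalar inequality of characteristic functions. By definition,
\[
T_\beta(r) = \int_{r_0}^r \frac{dt}{t^{2m-1}} \int_{M(t)} \beta \wedge \alpha^{m-1},
\]
and since $\alpha^{m-1} = (dd^c\sigma)^{m-1}$ is a non-negative $(m-1,m-1)$-form (because $\sigma$ is plurisubharmonic), the map $\beta \mapsto T_\beta(r)$ is monotone with respect to the partial order on $(1,1)$-currents of order zero. Applied to the previous inequality and using $N_\Sigma(r) = T_{[\Sigma]}(r)$, this yields
\[
T_{[-\Ric\omega]}(r) \le 2\pi N_\Sigma(r) + T_{-\Ric[\omega]}(r).
\]

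Third, I would invoke Proposition \ref{lemme1} directly on the term $T_{-\Ric[\omega]}(r)$, obtaining
\[
T_{-\Ric[\omega]}(r) \le_{\rm exc} -\Ric_\sigma(r) + O\bigl(\log T_{[\omega]}(r) + \log r\bigr).
\]
Substituting into the estimate of Step 2 produces the stated corollary.

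The only delicate point is Step 1, namely checking that the hypotheses of Lemma \ref{ricci} are fully satisfied — in particular the existence of a smooth $d$-closed $(1,1)$-form $\tilde\omega$ with $\tilde\omega - \Ric\omega \ge 0$ outside $\Sigma\cup\Sigma_h$. In the applications of interest (such as the Second Main Theorem of the next section, where the holomorphic sectional curvature is bounded above by a negative constant and the bisectional curvature is non-positive), such a form is automatically provided by the curvature assumption, so one can take, e.g., $\tilde\omega = 0$. The remaining arithmetic is purely formal.
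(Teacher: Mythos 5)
Your proof is correct and takes essentially the same route as the paper, which obtains the corollary precisely by combining Lemma \ref{ricci} (yielding $[-\Ric\omega]\le 2\pi[\Sigma]+(-\Ric[\omega])$, hence $T_{[-\Ric\omega]}(r)\le 2\pi N_{\Sigma}(r)+T_{-\Ric[\omega]}(r)$ by positivity of $\alpha^{m-1}$) with the estimate of Proposition \ref{lemme1}. Your closing remark on how the auxiliary form $\tilde\omega$ required by Lemma \ref{ricci} is supplied in the applications (e.g.\ $\tilde\omega=0$ under the negative curvature hypotheses of Theorem \ref{thm:parab-hsc}) is also exactly how the paper implicitly discharges that hypothesis.
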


\begin{remark}
When $M$ is a parabolic Riemann surface, the Ricci function coincides with the \emph{mean Euler characteristic} (cf \cite{PS14}, \cite{BB20})
\[\Ric_{\sigma}(r)=2\pi\mathfrak{X}_\sigma(r):=-2\pi\int_{r_0}^r\chi(M(t))\frac{dt}{t}.\]
To obtain this topological interpretation, one uses a never-vanishing global section of $K_M^\ast=TM\cong \O_M$ to construct a smooth global vector field $V$ having isolated zeroes on $M$. The Euler characteristic appears then by applying Poincaré-Hopf index theorem to the vector field $V$.

Unfortunately, this machinery cannot be used in the general setting, since the anticanonical line bundle of $M$ is not anymore trivial. In fact, one cannot even guarantee in general the existence of a non-trivial global section $\xi\in H^0(M,\det(TM))$ !   

However, if $K_M$ is indeed trivial, the Ricci function can also be related to the Euler characteristic via the Chern-Gauss-Bonnet formula.

Proposition \ref{lemme1} extends lemma 2.10 of \cite{BB20}, which we cite below.

\end{remark}
\begin{lemme}[Brotbek-Brunebarbe, \cite{BB20}, Lemma 2.10]
    Let $(M,\sigma)$ be a non-compact parabolic Riemann surface and $\xi\in H^0(M,TM)$ a global frame. Let $\Sigma$ be a reduced divisor on $M$ and $h$ be a pseudo-metric on $M\setminus \Sigma$ such that the function $\log\Vert\xi\Vert_h$ can locally be written as the difference of two subharmonic functions. Assume that the associated $(1,1)$-form $\omega$ is locally integrable. Then $\Ric[\omega]$ is well-defined and 
    \[T_{-\Ric[\omega]}(r)\lex -2\pi\mathfrak{X}_{\sigma}(r)+O(\log r+\log T_{[\omega]}(r)).\]
\end{lemme}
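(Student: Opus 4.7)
The plan is to deduce this lemma from Proposition \ref{lemme1} applied in dimension $m=1$, by using the global frame $\xi$ to select a volume form that makes the density $\chi$ coincide with $\Vert\xi\Vert_h^2$. This way the hypothesis on $\log\Vert\xi\Vert_h$ becomes exactly the hypothesis on $\log\chi$ required by the proposition.

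First I would take $\xi^\vee\in H^0(M,K_M)$ to be the dual global frame of $\xi$, and set $\Omega=\frac{i}{2}\xi^\vee\wedge\overline{\xi^\vee}$; this is a nowhere vanishing smooth volume form on $M$. A direct local computation in any coordinate $z$ in which $\xi=g\,\partial/\partial z$ yields $\xi^\vee=g^{-1}dz$ and $\Omega=\frac{i}{2}|g|^{-2}dz\wedge d\bar z$, while $\omega=\frac{i}{2}h(\partial/\partial z,\partial/\partial z)\,dz\wedge d\bar z$; therefore the ratio $\chi$ defined by $\omega=\chi\Omega$ satisfies $\chi=|g|^2 h(\partial/\partial z,\partial/\partial z)=\Vert\xi\Vert_h^2$. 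In particular $\log\chi=2\log\Vert\xi\Vert_h$ is locally a difference of subharmonic functions by assumption, which is the one-dimensional version of the plurisubharmonicity hypothesis of Proposition \ref{lemme1}.

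Next I would check the remaining integrability hypotheses of the proposition. Since $m=1$, we have $\alpha^{m-1}=1$ and $\omega\wedge\alpha^{m-1}=\omega$, which is locally integrable by assumption. Local integrability of $\log\omega^m=\log\omega$ follows from $\log\chi$ being locally a difference of subharmonic functions (which are $L^1_{\mathrm{loc}}$) added to the smooth function $\log$ of the density of $\Omega$. With all hypotheses satisfied, Proposition \ref{lemme1} gives
\[T_{-\Ric[\omega]}(r)\lex -\Ric_\sigma(r)+O(\log T_{[\omega]}(r)+\log r).\]

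Finally I would invoke the topological identification $\Ric_\sigma(r)=2\pi\mathfrak{X}_\sigma(r)$ discussed in the preceding remark: the existence of the global frame $\xi$ trivialises $TM$ (equivalently $K_M\cong\O_M$), so Poincaré–Hopf (or Chern–Gauss–Bonnet) relates $\Ric_\sigma$ to the mean Euler characteristic. Substituting gives the stated inequality. The only genuinely non-formal step is this last identification; everything else is a matter of unwinding the one-dimensional case of Proposition \ref{lemme1} via the frame $\xi$, so this topological step is the main obstacle, and I would rely on the remark preceding the lemma (which treats precisely this point) rather than redo it.
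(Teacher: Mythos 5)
Your derivation is correct and matches how the paper itself frames this statement: it is the $m=1$ case of Proposition \ref{lemme1}, with the frame $\xi$ used to produce the volume form $\Omega=\frac{i}{2}\xi^\vee\wedge\overline{\xi^\vee}$ so that $\log\chi=2\log\Vert\xi\Vert_h$, combined with the identification $\Ric_\sigma(r)=2\pi\mathfrak{X}_\sigma(r)$ stated in the preceding remark (and taken from \cite{PS14}, \cite{BB20}). The paper gives no independent proof beyond citing \cite{BB20} and this reduction, so your argument is essentially the intended one.
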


\section{Second main theorem for negative holomorphic sectional curvature}
\label{sec:smt}
Come back to the previous situation of a parabolic manifold $(M,\sigma)$ together with a smooth hermitian pseudo-metric $h$ on $M\setminus \Sigma$.
Assume that $h$ has non positive holomorphic bisectional curvature and holomorphic sectional curvature bounded above by a negative constant $-\gamma<0$.
\begin{lemme}
Denote by $\omega$ the $(1,1)-$form associated to the pseudo-metric $h$. Under these assumptions, one has $$[\Ric\omega]\le -\gamma[\omega]$$ on $M\setminus\Sigma$.
\end{lemme}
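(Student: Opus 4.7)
My plan is to reduce the assertion to a pointwise curvature inequality on the open dense set $M\setminus(\Sigma\cup\Sigma_h)$ where $h$ is non-degenerate, and then upgrade it to an inequality of currents on $M\setminus\Sigma$ by invoking the preparatory extension lemma for bounded-above plurisubharmonic functions.

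For the pointwise step, I would fix $p\in M\setminus(\Sigma\cup\Sigma_h)$ and pick holomorphic coordinates $(z_1,\dots,z_m)$ centered at $p$ with $h_{i\bar j}(p)=\delta_{ij}$, so that the function $\lambda$ defined by $\omega^m=(i/2)^m\lambda\prod_j dz_j\wedge d\bar z_j$ is a universal multiple of $\det(h_{i\bar j})$. By definition $\Ric\omega=-2\pi dd^c\log\lambda$ coincides at $p$ (up to the normalisation constant) with the trace of the Chern curvature tensor $R$ of $(TM,h)$. For any $\xi\in T_pM\setminus\{0\}$, I would complete $\xi/\Vert\xi\Vert_h$ to an $h$-orthonormal basis $(e_1,\dots,e_m)$ at $p$ and use the Hermitian symmetry $R_{i\bar j k\bar l}=R_{k\bar l i\bar j}$ to decompose
\[
\Ric\omega(\xi,\bar\xi)=R(\xi,\bar\xi,\xi,\bar\xi)+\sum_{k=2}^m R(\xi,\bar\xi,e_k,\bar e_k).
\]
The first summand is $\le -\gamma\Vert\xi\Vert_h^4$ by the holomorphic sectional curvature bound, and each of the remaining summands is $\le 0$ by non-positivity of the holomorphic bisectional curvature. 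Since $\omega(\xi,\bar\xi)=\Vert\xi\Vert_h^2$, this yields the pointwise inequality $\Ric\omega\le -\gamma\omega$ on $M\setminus(\Sigma\cup\Sigma_h)$.

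To upgrade to a current inequality on $M\setminus\Sigma$, I would rewrite the pointwise inequality as $dd^c\log\lambda\ge\frac{\gamma}{2\pi}\omega\ge 0$ on the non-degenerate locus, so that $\log\lambda$ is plurisubharmonic there. Since $h$ is a smooth pseudo-metric on $M\setminus\Sigma$, $\log\lambda$ is in addition locally bounded from above and smooth outside the thin analytic subset $\Sigma_h$. The first lemma of Section~2.2 (extension of bounded-above psh functions which are smooth on the complement of $\{\psi=-\infty\}$) then applies with $\psi=\log\lambda$: one obtains that $\log\lambda$ extends as a psh function to $M\setminus\Sigma$, that $dd^c\log\lambda$ is locally integrable, and that $[dd^c\log\lambda]\le dd^c[\log\lambda]$ in the sense of currents. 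Combined with the smooth pointwise inequality, this delivers the desired $[\Ric\omega]\le -\gamma[\omega]$ on $M\setminus\Sigma$.

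The main obstacle, as I see it, is not the pointwise curvature calculation, which is entirely classical. Rather it is the current-theoretic interpretation across the degeneracy locus $\Sigma_h$, where $\omega^m$ vanishes and $\log\lambda\to-\infty$. This is precisely the situation the preparatory psh-extension lemma was designed to handle, so once that lemma is invoked the argument closes routinely.
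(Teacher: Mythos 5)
Your proposal is correct and its pointwise step is exactly the paper's proof: in an $h$-orthonormal frame $(e_1=\xi/\Vert\xi\Vert_h,e_2,\dots,e_m)$ the Ricci form in the direction $\xi$ splits as the holomorphic sectional curvature term, bounded above by $-\gamma$, plus bisectional terms which are non-positive, yielding $\Ric\omega\le-\gamma\omega$ wherever $h$ is non-degenerate. Your additional current-theoretic upgrade across $\Sigma_h$ via the psh-extension lemma is more careful than the paper, which stops at the pointwise inequality; the only minor imprecision is that the pair-exchange symmetry $R_{i\bar j k\bar l}=R_{k\bar l i\bar j}$ you invoke is special to the K\"ahler case and is in fact not needed here, since the Chern--Ricci form $-2\pi dd^c\log\lambda$ is by definition the trace of the Chern curvature over the frame $(e_k)$ in its second (bundle) pair of arguments, which is exactly the sum you write down.
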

\begin{proof}
\normalsize
Denote by $H$ the holomorphic bisectional curvature of $h$. 
Let $x\in M\setminus \Sigma$ and take some local coordinates $(z_1,\cdots,z_m)$ centred at $x$.
Let $X\in T_xM$ be a unit holomorphic tangent vector at $x$ and take an $h-$ orthonormal basis $(e_1,\cdots,e_m)$ of $T_xM$ with $e_1=X$. 

By definition, one has:
\begin{align*}
\Ric(X,X)&=\sum_j H(X,e_j)\\
&=H(e_1,e_1)\mbox{ since $H$ is non positive}\\
&=-\gamma
\end{align*}
This completes the proof.
\end{proof}
\begin{lemme}
Keep the notations as above. Then:
\begin{enumerate}
\item $\omega$ is locally integrable on $M$.
\item Fix some local coordinate system $(z_1,\dots,z_n)$ so that
\[\omega^m=i_m\lambda\prod_{1\leq j\leq m}(dz_j\w d\bar{z_j}).\] 
Then $\log\lambda$ can be written as the difference of two plurisubharmonic functions.
\item Fix a smooth volume form $\Omega$ on $M$. It follows that writing $\omega^m=v\Omega$, $\log v$ can locally be written as the difference of two plurisubharmonic functions.
\end{enumerate}
\end{lemme}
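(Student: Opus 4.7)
I would exploit the curvature hypothesis through the previous lemma, which yields $[\Ric\omega]\le -\gamma[\omega]$ on the open dense subset $M\setminus\widetilde{\Sigma}$, where $\widetilde{\Sigma}=\Sigma\cup\Sigma_h$ is a thin analytic subset of $M$. In local coordinates $(z_1,\dots,z_m)$ on a chart $U$, writing $\omega^m=\left(\frac{i}{2}\right)^m\lambda\prod dz_j\wedge d\bar z_j$, the identity $\Ric\omega=-2\pi dd^c\log\lambda$ translates this into
\[
dd^c\log\lambda\ge \frac{\gamma}{2\pi}\omega\ge 0 \qquad \text{on } U\setminus\widetilde\Sigma,
\]
so $\log\lambda$ is smooth plurisubharmonic on $U\setminus\widetilde\Sigma$.

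For point (2), I aim to extend $\log\lambda$ as a difference of psh functions across $\widetilde\Sigma$. Near points of $\Sigma_h\subset M\setminus\Sigma$ the function $\lambda$ is smooth, hence bounded above (with $\log\lambda=-\infty$ on $\Sigma_h$), so the Riemann extension theorem for plurisubharmonic functions across a thin analytic subset directly extends $\log\lambda$ as psh across $\Sigma_h$. Near $\Sigma$, where $h$ is only defined on $M\setminus\Sigma$ and $\lambda$ may blow up, I would invoke an Ahlfors--Schwarz-type argument: using the negative sectional curvature, the pullbacks $\phi^\ast h$ to holomorphic disks $\phi:\Delta\to M\setminus\Sigma$ landing in a chart are dominated by Poincaré-type metrics on the corresponding punctured disks. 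This forces a local upper bound of the form $\lambda\le C|s|^{-N}$ on $U\setminus\widetilde\Sigma$, for $s$ a local defining section of the divisorial part of $\Sigma$ and some integer $N$; equivalently, $\log\lambda+N\log|s|^2$ is locally bounded above and psh on $U\setminus\widetilde\Sigma$, so extends uniquely as a psh function $u$ on $U$ by Riemann extension. Writing $\log\lambda=u-N\log|s|^2$ exhibits the desired decomposition as a difference of psh functions. Statement (3) is then immediate: if $\Omega=\rho\prod\frac{i}{2}dz_j\wedge d\bar z_j$ for a smooth positive function $\rho$ in the chart, then $\log v=\log\lambda-\log\rho$, and the smooth function $\log\rho$ is trivially a difference of psh functions (e.g.\ $\log\rho=(\log\rho+C|z|^2)-C|z|^2$ for $C$ large).

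For point (1), I rewrite the inequality $dd^c\log\lambda\ge\frac{\gamma}{2\pi}\omega$ as the pointwise bound $\omega\le\frac{2\pi}{\gamma}dd^c\log\lambda$ between non-negative $(1,1)$-forms on $U\setminus\widetilde\Sigma$. Wedging with $\omega_0^{m-1}$ for a smooth positive $(1,1)$-form $\omega_0$ on $U$ preserves the inequality (it amounts to comparing traces with respect to $\omega_0$). Pairing with a non-negative test function $\phi\in\mc{C}^\infty_c(U)$ and taking a monotone approximation by cut-offs supported away from $\widetilde\Sigma$ yields
\[
\int_U\phi\,\omega\wedge\omega_0^{m-1}\le\frac{2\pi}{\gamma}\int_U\phi\,dd^c[\log\lambda]\wedge\omega_0^{m-1}.
\]
By (2), $\log\lambda$ is a difference of psh functions, so $dd^c[\log\lambda]$ is a well-defined real $(1,1)$-current with locally finite mass, and the right-hand side is finite. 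Hence $\omega\wedge\omega_0^{m-1}$ has locally finite mass on $U$, which is precisely the local integrability of $\omega$ on $M$.

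The main obstacle is the upper bound on $\lambda$ near $\Sigma$ needed in step (2): this is the sole point at which the negative sectional curvature hypothesis is used in an essentially non-algebraic way. Once this Ahlfors--Schwarz-type growth control is in hand, both the psh decomposition of $\log\lambda$ and the local integrability of $\omega$ follow from routine manipulations with positive currents.
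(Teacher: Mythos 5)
Your proposal is correct and its core mechanism is the same as the paper's: the negative holomorphic sectional curvature is used through a Schwarz-lemma argument to control the growth of $\omega$ near $\Sigma$, the bounded-above psh function $\log(|s|^{N}\lambda)$ is extended across the singular set, and $\log\lambda$ is exhibited as a difference of psh functions; point (3) is the same triviality in both. The differences are in the packaging. The paper invokes Kobayashi's Schwarz lemma (Thm 2.4.14 in \cite{Kob98}) directly to get the comparison $\omega\le C\omega_0$ with the explicit Poincar\'e-type form $\omega_0$ on $(\Delta^*)^r\times\Delta^{m-r}$ (after a log-resolution making $\Sigma$ simple normal crossing -- a reduction you gloss over when you speak of ``the divisorial part of $\Sigma$''); this single comparison yields both the determinant bound (your $\lambda\le C|s|^{-N}$, in fact with the sharper $|z_1\cdots z_r|^{2}\lambda$ bounded) and, immediately, local integrability of $\omega$, since $\omega_0$ is integrable. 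You instead re-derive the Schwarz input by testing on disks (this is essentially the same result, just asserted more loosely: passing from bounds along disks to a bound on the determinant $\lambda$ requires the full comparison of $\omega$ with the Kobayashi/Poincar\'e metric of the punctured polydisk, which is exactly the cited theorem), and you obtain (1) only after (2), via the mass bound for $dd^c$ of a difference of psh functions. That route works, but note that in your displayed inequality the correct majorant is $dd^c[u]$ for the psh extension $u$ of $\log\lambda+N\log|s|^2$, not $dd^c[\log\lambda]$, whose singular part along $\{s=0\}$ enters with a negative sign; the finiteness conclusion is unaffected. In short: same key idea, with the paper's treatment of (1) more direct and independent of (2), and your treatment of the Schwarz step needing the precise reference (or the Kobayashi-metric comparison for the punctured polydisk) to be complete.
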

\begin{proof}
\normalsize
\begin{enumerate}
\item Let $x$ be a point of $\Sigma$. Take a neighbourhood $W$ of $x$. Up to taking a log-resolution of the pair $(W,W\cap\Sigma)$, we may assume that $\Sigma'=W\cap\Sigma$ has simple normal crossings.
Take a local holomorphic coordinate system $(z_1,\cdots z_m)$ centred at $x$ and such that $\Sigma'=(z_1\cdots z_r=0)$. We may assume that the $z_j$ are defined in the set $\{|z_j|< 1\}$.

Denote by $\omega_0$ the following Kähler form on $W\setminus\Sigma\cong(\Delta^*)^r\times\Delta^{m-r}$:
$$\omega_0=\sum_{j=1}^r\frac{1}{|z_j|^2(\log|z_j|^2)^2}\frac{i}{2\pi}dz_j\wedge d\bar{z_j}+\sum_{j=r+1}^m\frac{i}{2\pi}dz_j\wedge d\bar{z_j}.$$
Since $\Ric\omega$ is negatively bounded, we can apply Schwarz' lemma (see \cite{Kob98}, Thm 2.4.14), and there exists a positive constant $C$ such that 
\begin{equation}\label{eq:maj}
    \omega\le C\omega_0.
\end{equation}
Therefore, $\omega$ is integrable around $x$.
\item Using the upper bound \ref{eq:maj}, we see that the function $$z\mapsto|z_1\cdots z_r|^2\lambda(z)$$ is bounded above around $x$.

On the other hand, one has $$2\pi dd^c\log(|z_1\cdots z_r|^2\lambda(z))=-\Ric\omega\ge \gamma\omega\ge 0$$ on $W\setminus\{x\}$, so that the function $\phi:z\mapsto \log(|z_1\cdots z_r|^2\lambda(z))$ is plurisubharmonic on $W\setminus\{x\}$, therefore extends as a plurisubharmonic function $\phi$ on the whole of $W$.

Hence \[\log\lambda(z)=\phi(z)-\log(|z_1\cdots z_r|^2)\] is itself a plurisubharmonic function.

\end{enumerate}
\end{proof}
Applying Corollary \ref{thm:corollaire_logsing}, we get the following analogue of Theorem 2.14 of \cite{BB20}:
\begin{Theo}
\label{thm:parab-hsc}
Let $(M,\sigma)$ be a parabolic manifold of dimension $m$ and $\Sigma\subset M$ a thin analytic subset. Consider a smooth hermitian pseudo-metric $h$ defined on $M\setminus \Sigma$ whose degeneracy set $\Sigma_h$ is thin analytic. Assume that over $M\setminus(\Sigma\cup\Sigma_h)$, the holomorphic sectional curvature of $h$ is bounded above by a negative constant $-\gamma$ and the holomorphic bisectional curvature of $h$ is non positive.  Denote by $\omega$ the $(1,1)$-form associated with $h$. Then $\omega$ is locally integrable on $M$ and we have the following Second Main Theorem-type inequality:
$$\gamma T_{[\omega]}(r)\lex 2\pi N_\Sigma(r)-\Ric_{\sigma}(r)+O(\log T_{[\omega]}(r)+\log r).$$
\end{Theo}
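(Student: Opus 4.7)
The plan is to assemble the theorem directly from the preceding two lemmas together with Corollary \ref{thm:corollaire_logsing}, which already encapsulates most of the analytic work.

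First I would record the curvature-to-form comparison. The first lemma of this section gives, under the assumptions on the holomorphic sectional and bisectional curvatures, the pointwise inequality of currents $[\Ric\omega]\le -\gamma[\omega]$ on $M\setminus\Sigma$, equivalently $[-\Ric\omega]\ge \gamma[\omega]$. Since $\alpha^{m-1}$ is a non-negative $(m-1,m-1)$-form on $M^+$, applying $T_{(\cdot)}(r)$ (which is monotone with respect to the pointwise order of positive currents) yields
\[
\gamma\, T_{[\omega]}(r) \le T_{[-\Ric\omega]}(r).
\]

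Next I would verify the hypotheses required to apply Corollary \ref{thm:corollaire_logsing} with $\Sigma$ playing the role of the singular divisor. The local integrability of $\omega$ on $M$, and the fact that $\log\lambda$ (and thus $\log v$ after fixing a smooth volume form $\Omega$) is locally a difference of two plurisubharmonic functions, are precisely the content of the second lemma of this section; its proof via Kobayashi's Schwarz lemma also provides the bound $\omega\le C\omega_0$ near each point of $\Sigma$, from which one reads off that $\omega^m$ (and hence $\lambda$) has at most logarithmic poles along $\Sigma$. Local integrability of $\omega\wedge\alpha^{m-1}$ then follows from local integrability of $\omega$ combined with smoothness of $\alpha=dd^c\sigma$. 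This places us exactly in the setting of Proposition \ref{lemme1} and the ensuing corollary.

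Invoking Corollary \ref{thm:corollaire_logsing} then gives
\[
T_{[-\Ric\omega]}(r)\le_{\mathrm{exc}} 2\pi N_\Sigma(r)-\Ric_\sigma(r)+O(\log T_{[\omega]}(r)+\log r),
\]
and chaining this with the inequality $\gamma T_{[\omega]}(r)\le T_{[-\Ric\omega]}(r)$ established in the first step yields the claimed Second Main Theorem-type estimate.

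There is no real obstacle left: the genuine analytic input (the logarithmic derivative estimate and the comparison $[-\Ric\omega]\le 2\pi[\Sigma]+(-\Ric[\omega])$ coming from Lemma \ref{ricci}) is already packaged inside Corollary \ref{thm:corollaire_logsing}. The one point deserving care, and which I would treat explicitly, is ensuring that the logarithmic-singularity hypothesis of Corollary \ref{thm:corollaire_logsing} is truly satisfied; this is where the Schwarz lemma bound $\omega\le C\omega_0$ on a polydisc neighbourhood of a point of $\Sigma$, extracted from the proof of the second lemma, is essential, since it controls the local expression of $\omega^m$ by the product of the $1/|z_j|^2(\log|z_j|^2)^2$ factors and hence by $1/|z_1\cdots z_r|^2$ up to bounded terms.
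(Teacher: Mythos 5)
Your proposal is correct and follows the paper's own route: the paper establishes exactly the two preliminary lemmas you cite (the curvature comparison $[\Ric\omega]\le-\gamma[\omega]$ and the local integrability/quasi-psh statement with the Schwarz-lemma bound $\omega\le C\omega_0$) and then deduces the theorem by applying Corollary \ref{thm:corollaire_logsing} and chaining it with $\gamma T_{[\omega]}(r)\le T_{[-\Ric\omega]}(r)$. You simply make explicit the verification of the corollary's hypotheses (logarithmic singularities along $\Sigma$ and integrability of $\omega\wedge\alpha^{m-1}$), which the paper leaves implicit.
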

From this result we immediately derive a Second Main Theorem for quasi-projective varieties endowed with a metric with negative holomorphic sectional curvature.

\begin{Theo}
\label{thm:SMT-parab}
Let $X$ be a smooth projective variety of dimension $n$ and $D$ a divisor on $X$. 

Endow $X\setminus D$ with a smooth hermitian pseudo-metric $h$ whose holomorphic sectional curvature is bounded above by a negative constant $-\gamma$ and holomorphic bisectional curvature is non positive. Assume that the degeneracy set $\Sigma_h$ of $h$ is thin analytic.

Let $(M,\sigma)$ be a parabolic manifold of dimension $m\le n$ and $f:M\to X$ a non-constant meromorphic map such that $f(M)\not\subset D\cup \Sigma_h$. Denote by $\omega$ the $(1,1)-$form associated to the pseudo-metric $f^*h$ on $M\setminus f^{-1}(D)_{red}$. Then $\omega$ is locally integrable on $M$ and we have the following Second Main Theorem-type inequality:
$$\gamma T_{[\omega]}(r)\lex2\pi N^{[1]}_{f,D}(r)-\Ric_{\sigma}(r)+O(\log T_{[\omega]}(r)+\log r).$$
\end{Theo}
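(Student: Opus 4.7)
The strategy is to reduce Theorem \ref{thm:SMT-parab} to the intrinsic version, Theorem \ref{thm:parab-hsc}, by endowing $M$ with the pullback pseudo-metric $f^*h$. The only additional ingredient needed is the classical fact that holomorphic sectional and holomorphic bisectional curvatures decrease under holomorphic immersions.

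First I would set $\Sigma:=(f^{-1}D)_{\rm red}$, a reduced divisor on $M$ (well-defined since $f(M)\not\subset D$), and let $\omega=f^*\omega_h$ be the $(1,1)$-form of the pullback pseudo-metric $f^*h$, which is smooth on $M\setminus \Sigma$. The degeneracy locus of $f^*h$ decomposes as
\[\Sigma_{f^*h}\;=\;f^{-1}(\Sigma_h)\;\cup\;\{x\in M\setminus \Sigma\,:\,\rk\,df_x<m\},\]
an analytic subset of $M$ which is thin as soon as $f$ has generic rank $m$ (otherwise $\omega\equiv 0$ and the inequality is trivial because the right-hand side is non-negative up to lower-order terms). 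Next I would verify the curvature hypotheses of Theorem \ref{thm:parab-hsc} for $f^*h$: at a point $x\in M\setminus(\Sigma\cup \Sigma_{f^*h})$, where $df_x$ is injective and $f(x)\not\in\Sigma_h$, one may choose holomorphic coordinates identifying $f$ near $x$ with the inclusion of a complex submanifold into $X\setminus(D\cup \Sigma_h)$, so that $f^*h$ is the induced Hermitian metric on that submanifold. The Gauss-type equation for the Chern curvature then bounds the holomorphic sectional and bisectional curvatures of $f^*h$ at $x$ from above by those of $h$ at $f(x)$, yielding the required bounds $-\gamma$ and $0$ respectively.

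Applying Theorem \ref{thm:parab-hsc} to $(M,\sigma)$ with the thin analytic subset $\Sigma$ and the pseudo-metric $f^*h$ then yields local integrability of $\omega$ together with
\[\gamma T_{[\omega]}(r)\lex 2\pi N_\Sigma(r)-\Ric_\sigma(r)+O\bigl(\log T_{[\omega]}(r)+\log r\bigr).\]
Writing $f^*D=\sum_i a_i D_i$, one has $\Sigma=\sum_i D_i$ as reduced divisors, and hence $N_\Sigma(r)=T_{[D^{[1]}]}(r)=N^{[1]}_{f,D}(r)$ by the very definition of the truncated counting function; this gives the conclusion. The main technical point, and the only place where genuinely new input is required beyond Theorem \ref{thm:parab-hsc}, is the transfer of the curvature bounds to $f^*h$ via the Gauss-type argument, together with the careful bookkeeping of the critical locus of $df$, which is a new source of degeneracy of $f^*h$ not present in the abstract intrinsic setting.
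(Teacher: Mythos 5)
Your reduction is exactly the paper's: the paper offers no separate argument for Theorem \ref{thm:SMT-parab} beyond the remark that it is ``immediately derived'' from Theorem \ref{thm:parab-hsc}, i.e.\ one pulls back $h$ by $f$, takes $\Sigma=(f^{-1}D)_{\mathrm{red}}$, invokes the decrease of holomorphic sectional and bisectional curvature for the induced metric where $df$ is injective, and identifies $N_\Sigma(r)$ with $N^{[1]}_{f,D}(r)$ since $D^{[1]}=\sum_i\min(a_i,1)D_i=(f^*D)_{\mathrm{red}}$. Your write-up supplies precisely these details, and they are correct.

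The one step that does not hold as written is your dispatch of the rank-deficient case: if $f$ has generic rank $<m$, it is not true that $\omega\equiv 0$ --- $\omega=f^*\omega_h$ is then a nonzero but everywhere degenerate semi-positive $(1,1)$-form (e.g.\ $f$ factoring through a curve), so $T_{[\omega]}(r)$ need not vanish and the inequality is not ``trivial''. What actually happens in that case is that the degeneracy set $\Sigma_{f^*h}$ is all of $M$, so the hypothesis of Theorem \ref{thm:parab-hsc} that the degeneracy locus be a \emph{thin} analytic subset fails, and the reduction simply does not apply; moreover the whole machinery behind Theorem \ref{thm:parab-hsc} (working with $\omega^m$, $\log\chi$, $\Ric\omega:=\Ric\omega^m$) presupposes $\omega^m\not\equiv 0$. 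So the correct statement is that Theorem \ref{thm:SMT-parab}, as derived this way, tacitly assumes $f$ of generic maximal rank $m$ --- a gap shared with the paper's own formulation (note the tautological inequality in Section \ref{sec:tauto} does impose maximal rank explicitly). You should either add this hypothesis or give a genuinely different argument for the degenerate case, rather than claim $\omega\equiv 0$; apart from this point your proof coincides with the intended one.
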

\section{An Arakelov-Nevanlinna inequality for parabolic manifolds (following Brotbek-Brunebarbe )}
\label{sec:proof SMT for VHS}
In \cite{BB20}, Brotbek and Brunebarbe obtained a Second Main Theorem for holomorphic maps $f:B\to X$, where $B$ is a parabolic Riemann surface and $X$ a smooth projective variety supporting a variation of complex polarised Hodge structures. This can be seen as an analogue in Nevanlinna theory of Arakelov inequalities for variations of Hodge structures.

In this section, we shall see how this generalises naturally to higher-dimensional parabolic manifolds.

This part is a simple adaptation of Brotbek-Brunebarbe's work to the higher-dimensional setting. The arguments being exactly the same, the reader is referred to the original paper \cite{BB20} for additional details and proofs.
\subsection{Variations of Hodge structures}
We briefly recall the definition of variations of complex polarised Hodge structures.

A complex polarised Hodge structure on a finite-dimensional $\C$-vector space $V$ is the data of a non-degenerate hermitian form $h$ and a decomposition $V=\bigoplus_{p\in\Z} V^p$ which is orthogonal for $h$ such that the restriction of $(-1)^ph$ is positive definite for any $p\in\Z$. The Hodge metric $h_H$ is the well-defined hermitian metric on $V$ equal to $(-1)^ph$ on each $V^p$. The length of the decomposition is the integer $w=b-a$, where $[a,b]$ is the smallest interval such that $V^p=0$ for $p\notin[a,b]$.

The Hodge filtration $F^\bullet$ on $V$ is the natural decreasing filtration defined by $F^p=\bigoplus_{q\geq p}V^q$. The Hodge decomposition is entirely determined by $F^\bullet$ since $V^p=F^p\cap (F^{p+1})^\bot$.

\begin{Def}
   A variation of complex polarised Hodge structures $\mathbb{V}$ on a complex analytic space $S$ is the data of a complex local system $\mc{L}$ equipped with a non-degenerate hermitian form $h:\mc{L}\otimes_\C \overline{\mc{L}}\to\underline{\C}_S$ and a locally split finite filtration $\mc{F}^\bullet$ of $\mc{V}:=\mc{L}\otimes_\C\O_S$ by analytic coherent subsheaves such that
\begin{itemize}
 \item for every $s\in S$, the triple $(\mc{L}_s,\mc{F}_s^\bullet,h_s)$ defines a complex polarised Hodge structure;
\item The connection $\nabla:=\mathrm{id}\otimes d$ satisfies \emph{Griffith's transversality property}: $\nabla(\mc{F}^p)\subset\mc{F}^{p-1}\otimes_{\O_S}\Omega^1_S$ for all $p$.
\end{itemize}

When $S$ is smooth (which will be the case for us), the data of the complex local system $\mc{L}$ is equivalent to the data of the holomorphic vector bundle $\mc{V}:=\mc{L}\otimes_\C\O_S$ equipped with the integrable connection $\nabla:=\mathrm{id}\otimes d$.
The length of the complex polarized Hodge structure on the complex vector space $\mc{L}_s$ is independent of the point $s\in S$ and is called the length of $\mathbb{V}$.
\end{Def}

Let $(\mathcal{V}, \nabla, \mc{F}^{\bullet}, h)$ be a variation of complex polarized Hodge structures on a complex manifold $S$, and define $\mc{E}:= \bigoplus_p \mc{E}^p$ with $\mc{E}^p := \mc{F}^p / \mc{F}^{p+1}$. It follows from Griffiths transversality that the connection $\nabla$ induces an $\O_S$-linear morphism  $ \mc{E}^p \rightarrow  \Omega^1_S \otimes_{\O_S} \mc{E}^{p-1}$ for every $p$.
We denote by $\phi_p$ the corresponding element of $ \Omega^1_S \otimes_{\O_S} \mathcal{H}om(\mc{E}^p , \mc{E}^{p-1})$ and define the Higgs field $\phi := \oplus_p \phi_p \in \Omega^1_S (\mathcal{E}nd(\mc{E}))$, which can be viewed as an $\O_S$-linear morphism $ TS \rightarrow \mathcal{E}nd(\mc{E})$. The pair $(\mc{E}, \phi)$ together with the decompositions $\mc{E} := \bigoplus_p \mc{E}^p$ and $\phi := \oplus_p \phi_p $ is called the associated system of Hodge bundles. \\

The holomorphic vector bundle $\mc{E}$ comes equipped with the positive-definite hermitian metric $h_H$, the Hodge metric, which makes the decomposition $\mc{E} := \bigoplus_p \mc{E}^p$ orthogonal.

\subsection{The Griffiths-Schmid pseudo-metric}

Let $S$ be a complex manifold endowed with a variation of complex polarized Hodge structures $(\mathcal{V}, \nabla, \mc{F}^{\bullet}, h)$ of length $w$. We will construct a metric on $S$ having negative holomorphic sectional curvature and non-positive holomorphic bisectional curvature.

\begin{Def}
For any integer $p$, the pull-back by $\phi_p$ of the Hodge metric on $\mathcal{H}om(\mc{E}^p , \mc{E}^{p-1})$ induces a pseudo-metric $h_p$ on $TS$. The sum of these pseudo-metrics, which coincides with the pseudo-metric on $ T_S$ obtained as the pull-back by $\phi$ of the Hodge metric on $\mathcal{E}nd(\mc{E})$, is called the Griffiths-Schmid pseudo-metric on $S$ induced by $\V$.
\end{Def}

By definition, the locus where the Griffiths-Schmid pseudo-metric is non-degenerate coincide with the Zariski-open subset of $S$ where the $\O_S$-linear morphism $\phi : T_S \rightarrow \mc{E} nd(\mc{E})$ is injective, or in other words with the locus where the period map associated to $\V$ is immersive.\\

\begin{Prop}[Griffiths, cf. \cite{Schmid}]
The curvature form of the Chern connection of the Griffiths line bundle $L_{\mathbb{V}} :=  \otimes_p \det \mc{F}^p$ equipped with the Hodge metric corresponds to the Griffiths-Schmid pseudo-metric.

In other words, the K\"ahler form of the Griffiths-Schmid pseudo-metric on $S$ equals the Chern curvature form of $(L_{\mathbb{V}}  , h_H)$.
\end{Prop}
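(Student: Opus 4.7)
The strategy I would follow is Griffiths' classical curvature calculation for the Hodge filtration. First, the short exact sequences of holomorphic bundles
\[
0 \to \mc{F}^{p+1} \to \mc{F}^p \to \mc{E}^p \to 0
\]
yield canonical isomorphisms $\det \mc{F}^p \cong \bigotimes_{q \geq p} \det \mc{E}^q$ of holomorphic line bundles. Consequently,
\[
c_1(L_{\V}, h_H) = \sum_p c_1(\det \mc{F}^p, h_H) = \sum_q n_q\, c_1(\det \mc{E}^q, h_H|_{\mc{E}^q}),
\]
with multiplicities $n_q = \#\{p : p \leq q\}$ running over the Hodge indices. This reduces the problem to computing the Chern curvature of each individual Hodge bundle.

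Next, I would invoke the Hodge-theoretic curvature formula. Decomposing the flat connection along the $C^\infty$-orthogonal Hodge decomposition gives $\nabla = D + \phi + \phi^*$, where $D$ is a unitary connection preserving each $\mc{E}^q$ (and restricting to the Chern connection of $(\mc{E}^q, h_H|_{\mc{E}^q})$), $\phi$ is the Higgs field viewed as a $(1,0)$-form, and $\phi^*$ its $h_H$-adjoint, a $(0,1)$-form. Separating $\nabla^2 = 0$ by bidegree produces the identity
\[
\Theta_D|_{\mc{E}^p} = -\phi_p^* \wedge \phi_p - \phi_{p+1} \wedge \phi_{p+1}^*.
\]
Taking traces and using the cyclic identity $\tr(\phi_{p+1} \wedge \phi_{p+1}^*) = -\tr(\phi_{p+1}^* \wedge \phi_{p+1})$ (the sign comes from commuting a $(1,0)$- past a $(0,1)$-form) gives
\[
c_1(\det \mc{E}^p, h_H|_{\mc{E}^p}) = \tr(\phi_{p+1}^* \wedge \phi_{p+1}) - \tr(\phi_p^* \wedge \phi_p).
\]

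Substituting into the sum for $c_1(L_{\V}, h_H)$ and reindexing, the multiplicities satisfy $n_q - n_{q-1} = 1$ throughout the Hodge range, and $\phi_q = 0$ at the extremes so that boundary terms vanish. The double sum therefore telescopes to
\[
c_1(L_{\V}, h_H) = \sum_p \tr(\phi_p^* \wedge \phi_p),
\]
up to the overall sign fixed by the Chern form convention. Evaluating pointwise on $(X, \bar X)$ produces $\sum_p \Vert \phi_p(X) \Vert_{h_H}^2 = \Vert \phi(X) \Vert_{h_H}^2$, which is by definition the Kähler form of the pull-back pseudo-metric $\phi^* h_H$ on $TS$, i.e. the Griffiths-Schmid pseudo-metric.

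The essential input is the Hodge-theoretic curvature formula, which is classical (Griffiths, \cite{Schmid}, and subsequently Simpson); everything beyond it is linear-algebraic manipulation. The main technical obstacle is the careful sign bookkeeping in the Griffiths decomposition and in the cyclic trace identities for $\End(\mc{E})$-valued forms of mixed type. A slightly cleaner alternative would be to compute $c_1(L_{\V}, h_H)$ directly as $-dd^c \log \prod_p \det(h_H|_{\mc{F}^p})$ via the Gauss-Codazzi formula for the holomorphic subbundles $\mc{F}^p \subset (\mc{V}, h_H)$, whose second fundamental forms are again encoded by the Higgs field $\phi$; this amounts to the same calculation, only organised differently.
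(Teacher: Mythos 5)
Your argument is correct and is exactly the classical Griffiths--Schmid curvature computation: the decomposition $\nabla = D + \phi + \phi^*$, the bidegree-$(1,1)$ consequence of flatness giving the curvature of each $(\mc{E}^p,h_H)$, and the telescoping of the weighted sum $\sum_q n_q\,c_1(\det\mc{E}^q)$ (with $n_q-n_{q-1}=1$ and vanishing extreme Higgs components) do yield $c_1(L_{\V},h_H)=\sum_p\tr(\phi_p^*\wedge\phi_p)$, whose evaluation at $(X,\bar X)$ is $\Vert\phi(X)\Vert_{h_H}^2$, i.e.\ the K\"ahler form of the Griffiths--Schmid pseudo-metric. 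The paper gives no proof of this proposition and simply cites Griffiths/Schmid, and your sketch is precisely the argument of those references; the only point to pin down in a complete write-up is the sign and $i/2\pi$ normalisation bookkeeping that you yourself flag.
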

\begin{Prop}[\cite{BB20} Proposition 3.5]
\label{curv-griffiths}
Over the Zariski-open subset where it is non-degenerate, the Griffiths-Schmid metric has non-positive holomorphic bisectional curvature and negative holomorphic sectional curvature $-  \gamma$ with
\[ \frac{1}{\gamma}  \leqslant \frac{w^2}{4} \cdot \rank( \mc{V}). \]
\end{Prop}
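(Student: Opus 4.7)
The plan is to compute the curvature of the Griffiths-Schmid pseudo-metric explicitly in terms of the Higgs field $\phi$, following Griffiths' original calculation. I would work over the Zariski-open locus $S^\circ \subset S$ where $\phi: TS \to \mathcal{E}nd(\mc{E})$ is injective, so that the Griffiths-Schmid form $\omega_{GS}$ is a genuine Kähler form on $S^\circ$.

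First I would write Griffiths' curvature formula for each Hodge bundle $(\mc{E}^p, h_H)$: the Chern curvature decomposes as
\[\Theta(\mc{E}^p, h_H) = -\phi_p^\ast \wedge \phi_p - \phi_{p+1} \wedge \phi_{p+1}^\ast,\]
where $\phi_p^\ast : \mc{E}^{p-1} \to \Omega^1_S \otimes \mc{E}^p$ is the adjoint of $\phi_p$ with respect to the Hodge metric. Taking traces and summing telescopically over $p$ recovers the previous proposition; more importantly, the same formula controls the variation of the Hodge norm along $S^\circ$, which is what we need for the curvature of $TS$ with the pulled-back metric.

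Next, to compute the holomorphic bisectional curvature of $(TS, h_{GS})$, I would express $h_{GS}(\xi,\xi) = \sum_p \|\phi_p(\xi)\|^2_{h_H}$ and differentiate twice, using the formulas from the previous step to handle the $\bar\partial\partial$ of $\log \|\phi_p(\xi)\|^2$. The curvature of the pulled-back metric splits into two types of contributions: a positively curved piece from differentiating the Higgs field $\phi$ along $S$ (analogous to a second fundamental form) and a piece coming from the intrinsic curvature of $\mathcal{E}nd(\mc{E})$ along $\phi$. A direct bookkeeping, using that $[\phi,\phi]=0$ (integrability of the Higgs field, from Griffiths transversality) and that the decomposition $\mc{E} = \bigoplus_p \mc{E}^p$ is $h_H$-orthogonal, shows that the curvature operator on $TS$ is dominated by a sum of squares of terms of the form $\langle \phi(\eta)\phi(\xi)u, v\rangle - \langle \phi(\xi)\phi(\eta)u, v\rangle$, and bisectional non-positivity follows.

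For the holomorphic sectional curvature bound, I would specialise to $\xi = \eta$ and apply a Cauchy–Schwarz/Royden-type argument. Writing $\xi\in T_sS$ unit for $h_{GS}$ and looking at the orbit $\phi(\xi), \phi(\xi)^2, \dots, \phi(\xi)^w$ acting on the $\mc{E}^p$'s (which terminate by the length of the filtration), the sectional curvature can be written as
\[\mathrm{Sec}_{h_{GS}}(\xi) = -\frac{\sum_p \|[\phi(\xi),\phi(\xi)^\ast]|_{\mc{E}^p}\|^2_{HS}}{\bigl(\sum_p \|\phi_p(\xi)\|^2_{HS}\bigr)^2},\]
and I would bound the numerator from below by applying the inequality $\|AA^\ast - A^\ast A\|_{HS}^2 \geq \tfrac{4}{\mathrm{rank}} \|A\|^4_{HS}$ piece by piece, using that there are at most $w$ graded components to chain together. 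Taking the infimum over unit $\xi$ then yields the constant $\gamma$ with $\tfrac{1}{\gamma} \leq \tfrac{w^2}{4}\rk(\mc{V})$. The main obstacle will be the last step: obtaining the sharp constant $\tfrac{w^2}{4}\rk(\mc{V})$ rather than some cruder bound requires carefully tracking how the $w$ graded pieces enter the Cauchy–Schwarz estimate and how the total rank $\rk(\mc{V}) = \sum_p \rk(\mc{E}^p)$ controls the operator-to-Hilbert-Schmidt ratio uniformly; everything else is more or less a direct unpacking of Griffiths' formulas.
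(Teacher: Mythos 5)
The paper itself does not reprove this statement: it quotes it verbatim from \cite{BB20} (Proposition 3.5), whose proof is the classical Griffiths--Schmid curvature computation. Your overall plan (Griffiths' curvature formula for the Hodge bundles, curvature of the pulled-back metric on $TS$, then a quantitative lower bound on a commutator term) is the right skeleton, but the two places where the real content sits are wrong or unjustified as written. For bisectional non-positivity you claim the curvature is dominated by squares of terms $\langle \phi(\eta)\phi(\xi)u,v\rangle-\langle\phi(\xi)\phi(\eta)u,v\rangle$; these vanish identically because $[\phi(\xi),\phi(\eta)]=0$, so they cannot be the source of the sign (taken literally it would force the curvature to vanish, contradicting your own sectional-curvature formula). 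The terms that actually control the sign are commutators of a Higgs field with an \emph{adjoint}, $[\phi(\xi),\phi(\eta)^{\ast}]$; the relation $[\phi,\phi]=0$ only serves to kill other cross terms. Also, your displayed identity for $\mathrm{Sec}_{h_{GS}}(\xi)$ is an identity on the period domain in horizontal directions; on $S$ one only gets an inequality $\le$, via the curvature-decreasing property (the derivative of $\phi$ along $S$ enters with a favourable sign) --- harmless, but it should be stated as such.

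The more serious gap is the quantitative step. The inequality $\|AA^{\ast}-A^{\ast}A\|_{HS}^{2}\ge \frac{4}{\mathrm{rank}}\|A\|_{HS}^{4}$ is false for a general matrix (any normal $A$ gives a zero left-hand side), and applying it ``piece by piece'' to the $\phi_p(\xi)$ is not meaningful: the restriction of $[N,N^{\ast}]$ to $\mc{E}^p$, where $N=\phi(\xi)$, is a difference of contributions from two adjacent graded pieces (of the shape $\phi_{p+1}(\xi)\phi_{p+1}(\xi)^{\ast}-\phi_p(\xi)^{\ast}\phi_p(\xi)$), and these can cancel blockwise, so there is no per-piece lower bound to chain. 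The standard mechanism producing the constant --- and the only place where $w^{2}/4$ and $\rk\mc{V}$ enter --- is the trace pairing against the grading operator $Y$ acting by the (centred) weight $p$ on $\mc{E}^p$: since $[Y,N]=-N$, one gets $\tr([N,N^{\ast}]Y)=-\|N\|_{HS}^{2}$, hence by Cauchy--Schwarz $\|[N,N^{\ast}]\|_{HS}\ge \|N\|_{HS}^{2}/\|Y\|_{HS}$ with $\|Y\|_{HS}^{2}\le \frac{w^{2}}{4}\rk\mc{V}$ because the centred weights are bounded by $w/2$. Feeding this into the curvature expression gives exactly $1/\gamma\le \frac{w^{2}}{4}\rk\mc{V}$. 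Without this (or an equivalent) argument your sketch does not yield the stated constant, which you yourself flag as the unresolved obstacle.
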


\subsection{A version of the Second Main Theorem}

In view of Proposition \ref{curv-griffiths}, we may apply Theorem \ref{thm:parab-hsc} to the Griffiths-Schmid pseudo-metric and get the following.
\begin{Theo}\label{thm:Arakelov-Nevanlinna inequality}
Let $(M,\sigma)$ be a parabolic manifold, $\Sigma \subset M$ a thin analytic subset and $\mathbb{V} = (\mc{L}, \mc{F}^{\bullet}, h)$ be a variation of complex polarised Hodge structures of length $w$ on $ M\setminus\Sigma$ with a non-constant period map. Then the first Chern form of the holomorphic line bundle $ L_{\mathbb{V}} = \otimes_p \det  \mc{F}^p$ equipped with the hermitian metric induced by $h$ extends as a current \( [L_{\mathbb{V}}  ] \) on $M$, and one has
 \[ T_{[L_{\mathbb{V}} ]}(r) \lex \frac{w^2 \cdot \rk \mc{L}}{4}  \left( - \frac{1}{2\pi}\Ric_\sigma(r) +  N_{\Sigma}(r)\right)+  O( \log T_{[L_{\mathbb{V}}]}(r)+ \log r). \]

\end{Theo}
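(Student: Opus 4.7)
The plan is to apply Theorem \ref{thm:parab-hsc} directly to the Griffiths-Schmid pseudo-metric induced by $\mathbb{V}$ on $M\setminus\Sigma$. Denote by $h_{GS}$ this pseudo-metric and by $\omega$ its associated $(1,1)$-form. By the proposition of Griffiths recalled just above the statement, $\omega$ coincides with the first Chern curvature form $c_1(L_{\mathbb{V}}, h_H)$ of the Griffiths line bundle $L_{\mathbb{V}}=\otimes_p\det\mc{F}^p$ equipped with the Hodge metric. Hence controlling $T_{[L_{\mathbb{V}}]}(r)$ is exactly controlling $T_{[\omega]}(r)$ once we know that $\omega$ extends as a current on $M$.

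Next I would check that the hypotheses of Theorem \ref{thm:parab-hsc} are satisfied. By Proposition \ref{curv-griffiths}, over the Zariski-open subset of $M\setminus\Sigma$ where $h_{GS}$ is non-degenerate, its holomorphic bisectional curvature is non-positive and its holomorphic sectional curvature is bounded above by a negative constant $-\gamma$ with $\frac{1}{\gamma}\leq \frac{w^2}{4}\rk\mc{L}$ (up to the normalisation constant appearing in Griffiths' proposition). The degeneracy set $\Sigma_{h_{GS}}$ is precisely the locus where the period map of $\mathbb{V}$ fails to be immersive; since the period map is non-constant by assumption, this locus is a proper analytic subset of $M\setminus\Sigma$, and together with $\Sigma$ its closure in $M$ is a thin analytic subset, as required.

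Applying Theorem \ref{thm:parab-hsc} to $(M,\sigma)$, $\Sigma$ and $h_{GS}$ then gives that $\omega$ is locally integrable on $M$, hence $[c_1(L_{\mathbb{V}},h_H)]$ extends as a current $[L_{\mathbb{V}}]$ on $M$, and
\[\gamma\,T_{[\omega]}(r)\lex 2\pi N_{\Sigma}(r)-\Ric_\sigma(r)+O(\log T_{[\omega]}(r)+\log r).\]
Dividing by $\gamma$ and plugging in the curvature bound from Proposition \ref{curv-griffiths} yields the announced inequality
\[T_{[L_{\mathbb{V}}]}(r)\lex \frac{w^2\cdot\rk\mc{L}}{4}\Bigl(-\frac{1}{2\pi}\Ric_\sigma(r)+N_{\Sigma}(r)\Bigr)+O(\log T_{[L_{\mathbb{V}}]}(r)+\log r).\]

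There is essentially no hard step here: the whole content is encapsulated in Theorem \ref{thm:parab-hsc} (whose proof occupied Section \ref{sec:smt}) and in Proposition \ref{curv-griffiths} (coming from the work of Griffiths-Schmid recalled above). The only point to verify with some care is that $\Sigma_{h_{GS}}$ is thin analytic in $M$, which uses the non-constancy of the period map; this is the mildest possible obstacle and does not require any computation beyond recalling that the degeneracy locus of $h_{GS}$ is the non-immersive locus of the period map.
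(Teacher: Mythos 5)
Your proposal is correct and coincides with the paper's own (one-line) proof: apply Theorem \ref{thm:parab-hsc} to the Griffiths--Schmid pseudo-metric, identify its $(1,1)$-form with $c_1(L_{\mathbb{V}},h_H)$ via Griffiths' proposition, and convert the constant $\gamma$ into $\frac{w^2\cdot\rk\mc{L}}{4}$ using Proposition \ref{curv-griffiths}. Your added remarks (local integrability giving the current $[L_{\mathbb{V}}]$, and the degeneracy locus of the pseudo-metric being the non-immersive locus of the period map, hence thin analytic) are exactly the points the paper leaves implicit.
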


Putting together theorem \ref{thm:Arakelov-Nevanlinna inequality} and proposition \ref{thm:griff-ext} below, we get
\begin{Theo}\label{thm:SMT for VHS}
Let $X$ be a smooth projective complex algebraic variety of dimension $n$ and $\mathbb{V} = (\mc{L}, \mc{F}^{\bullet}, h)$ be a variation of complex polarised Hodge structures defined on the complement of a normal crossing divisor $D \subset X$. Assume that $\mc{L}$ has unipotent monodromies around the irreducible components of $D$. We denote by $\bar{\mc{F}^p}$ the canonical Deligne-Schmid extension of $\mc{F}^p$ to $X$ for any integer $p$ and by $\bar{L}_{\mathbb{V}} = \otimes_p \det \bar{\mc{F}^p}$ the canonical extension of the Griffiths line bundle of $\mathbb{V}$.
Let $(M,\sigma)$ be a parabolic manifold of dimension $m\le n$ and $f:M\to X$ be a non-constant holomorphic map such that \(f(M)\not\subset D\). Then the first Chern form of the line bundle $ L_{\mathbb{V}}$ equipped with the hermitian metric induced by $h$ extends as a current \( [L_{\mathbb{V}}  ] \) on $M$, and for any ample line bundle $A$ on $X$, one has
  \[T_{f, \bar{L}_{\mathbb{V}}}(r)  \lex \frac{w^2 \cdot \rk \mc{L}}{4}  \left( - \frac{1}{2\pi}\Ric_\sigma(r) +  N_{\Sigma}(r)\right) +O(\log r+\log( T_{f,A}(r))).\]

\end{Theo}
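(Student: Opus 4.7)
The plan is to reduce the statement to Theorem \ref{thm:Arakelov-Nevanlinna inequality} by pulling back the variation of Hodge structures along $f$, and then to translate the resulting estimate on the intrinsic current $[f^{\ast} L_{\mathbb{V}}]$ on $M$ into an estimate on the characteristic $T_{f,\bar{L}_{\mathbb{V}}}$ attached to a smooth hermitian metric on the Deligne--Schmid canonical extension.

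First, I would set $\Sigma := (f^{-1}D)_{red}$. Since $f(M)\not\subset D$ and $D$ has simple normal crossings, $\Sigma$ is a thin analytic subset of $M$, and the restriction $f|_{M\setminus\Sigma}$ factors through $X\setminus D$, so that the pull-back $f^{\ast}\mathbb{V}$ is a variation of complex polarised Hodge structures on $M\setminus\Sigma$, of length $w$ and with underlying local system of rank $\rk \mc{L}$. Its Griffiths line bundle equipped with the Hodge metric is precisely $f^{\ast}L_{\mathbb{V}}$ endowed with the pulled-back Hodge metric. Applying Theorem \ref{thm:Arakelov-Nevanlinna inequality} to $f^{\ast}\mathbb{V}$ on the parabolic manifold $(M,\sigma)$ directly yields
\[T_{[f^{\ast}L_{\mathbb{V}}]}(r) \lex \frac{w^2 \cdot \rk \mc{L}}{4}\left(-\frac{1}{2\pi}\Ric_{\sigma}(r) + N_{\Sigma}(r)\right) + O\!\left(\log T_{[f^{\ast}L_{\mathbb{V}}]}(r) + \log r\right).\]
The degenerate case where the period map associated to $f^{\ast}\mathbb{V}$ is constant is handled separately and is trivial, since the Hodge metric on $f^{\ast}L_{\mathbb{V}}$ is then flat.

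Second, I would invoke Proposition \ref{thm:griff-ext} to compare the singular Hodge metric on $L_{\mathbb{V}}$ with a smooth reference metric $h_1$ on the canonical extension $\bar{L}_{\mathbb{V}}$. Under the unipotent monodromy hypothesis, the asymptotic norm estimates of Cattani--Kaplan--Schmid ensure that, writing $h_H = e^{-\phi} h_1$, the weight $e^{-\phi}$ is locally bounded above near $D$ by $C\log\Vert s_D\Vert_D^{-2\beta}$ for constants $\beta,C>0$ depending only on $\mathbb{V}$. Lemma \ref{thm:lemme2} then gives
\[T_{f,\bar{L}_{\mathbb{V}}}(r) \leq T_{[f^{\ast}L_{\mathbb{V}}]}(r) + O(\log T_{f,A}(r)),\]
while Lemma \ref{thm:lemme1}, together with the positivity of the Griffiths--Schmid curvature current, yields the reverse inequality up to $O(\log T_{f,A}(r))$. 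Injecting the two-sided comparison into the Arakelov--Nevanlinna bound above and absorbing the now superfluous term $\log T_{[f^{\ast}L_{\mathbb{V}}]}(r)$ into $\log T_{f,\bar{L}_{\mathbb{V}}}(r)+\log T_{f,A}(r)$ (and using that $\bar{L}_{\mathbb{V}}$ is dominated, up to a bounded error, by some power of $A$) yields the announced inequality.

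The main obstacle lies in Proposition \ref{thm:griff-ext} itself, namely the statement that the Hodge metric on $L_{\mathbb{V}}$ degenerates along $D$ by at most a polylogarithmic factor when measured against a smooth metric on the canonical Deligne--Schmid extension $\bar{L}_{\mathbb{V}}$. This is precisely where the unipotent monodromy hypothesis is essential and relies on the fine asymptotic analysis of polarised variations of Hodge structures. Once this comparison is taken as given, the reduction to Theorem \ref{thm:Arakelov-Nevanlinna inequality} and the use of Lemmas \ref{thm:lemme1} and \ref{thm:lemme2} are purely formal.
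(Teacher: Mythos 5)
Your proposal is correct and follows essentially the same route as the paper: the statement is obtained by applying Theorem \ref{thm:Arakelov-Nevanlinna inequality} to the pulled-back variation on $M\setminus (f^{-1}D)_{red}$ and then invoking Proposition \ref{thm:griff-ext} (itself a consequence of Lemma \ref{thm:lemme2} under the unipotent monodromy hypothesis) to pass from $T_{[L_{\mathbb{V}}]}$ to $T_{f,\bar{L}_{\mathbb{V}}}$. Your additional use of Lemma \ref{thm:lemme1} to absorb the $\log T_{[L_{\mathbb{V}}]}$ error into $O(\log T_{f,A})$ is a detail the paper leaves implicit, and it is handled correctly.
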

The following fact is a simple consequence of lemma \ref{thm:lemme2}, since the assumption on the monodromy ensures that the Hodge metric satisfies the growth condition.
\begin{Prop}\label{thm:griff-ext}
  With the above notation,
  one has
  \[ T_{f, \bar{L_{\mathbb{V}} }}(r)  \leqslant T_{[L_{\mathbb{V}} ]}(r) +O(\log( T_{f,A}(r))).\]
\end{Prop}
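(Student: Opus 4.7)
The plan is to reduce the comparison directly to Lemma \ref{thm:lemme2} by viewing the Hodge metric as a singular hermitian metric on the canonical extension $\bar{L}_{\V}$, and to verify the polylogarithmic growth hypothesis of that lemma via Schmid's norm estimates, the unipotent-monodromy assumption being exactly what is needed to make these estimates clean.

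More precisely, fix any smooth hermitian metric $h_1$ on the line bundle $\bar{L}_{\V}$ over $X$. The Hodge metric $h_H$ defined on $L_{\V}$ over $X\setminus D$ extends as a singular hermitian metric $h_2$ on $\bar{L}_{\V}$, and one can write $h_2=e^{-\phi}h_1$ for a function $\phi$ which is smooth on $X\setminus D$. By the very definition of the characteristic functions and of the current $[L_\V]$, one has
\[T_{f,c_1(\bar L_{\V},h_1)}(r)=T_{f,\bar L_{\V}}(r)+O(1)\qquad\text{and}\qquad T_{f,c_1(\bar L_{\V},h_2)}(r)=T_{[L_{\V}]}(r)+O(1),\]
so the desired inequality is precisely the conclusion of Lemma \ref{thm:lemme2} applied to the pair $(h_1,h_2)$, to the divisor $D$ and the given ample line bundle $A$.

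The only thing to check is the local growth condition $e^{-\phi}\le C\,\log\lVert s_D\rVert_D^{-2\beta}$ near $D$. This is the main (and only) technical point, and it is precisely Schmid's norm estimates for variations of polarised Hodge structures with unipotent monodromy. Concretely, near a point of $D$ one chooses local coordinates $(z_1,\dots,z_n)$ in which $D=(z_1\cdots z_k=0)$; choosing a local trivialising section $e$ of $\bar{\mc F}^p$ given by Deligne's canonical extension and using the decomposition $L_{\V}=\otimes_p\det\mc F^p$, Schmid's nilpotent orbit theorem and $\mathrm{SL}_2$-orbit theorem imply that $\lVert e\rVert_{h_H}^2$ is comparable to a finite product of powers of the functions $-\log|z_j|$ (up to a smooth positive factor); taking $\det$ and tensoring over $p$ preserves this behaviour. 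Hence $h_H/h_1=e^{-\phi}$ is bounded above by a polynomial in $\log\lVert s_D\rVert_D^{-2}$, which is itself dominated by $C\,\log\lVert s_D\rVert_D^{-2\beta}$ for a sufficiently large $\beta$.

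The hardest step is this Hodge-theoretic input; everything else is bookkeeping (choice of a reference smooth metric, and the two identifications of characteristic functions written above). With the norm estimate in hand, Lemma \ref{thm:lemme2} applies verbatim and yields
\[T_{f,\bar L_{\V}}(r)\le T_{[L_{\V}]}(r)+O\bigl(\log T_{f,A}(r)\bigr),\]
which is the stated inequality.
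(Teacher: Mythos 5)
Your proof is correct and follows essentially the same route as the paper, which likewise derives the proposition directly from Lemma \ref{thm:lemme2} by noting that the unipotent-monodromy assumption guarantees the Hodge metric satisfies the required logarithmic growth condition near $D$. You simply make explicit the Hodge-theoretic input (Schmid's norm estimates) that the paper leaves implicit.
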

\section{The parabolic tautological inequality}\label{sec:tauto}

Consider a holomorphic map $f:M\to X$, where $(M^m,\sigma)$ is a parabolic manifold and $X$ a smooth projective variety of dimension $n\geq m$. For any normal crossing divisor $D\subset X$ such that $f(M)\not\subset D$, one wants to estimate the characteristic function of the \emph{tautological line bundle} $\O(1)$ on the projectivisation of the $m$-th power of the (logarithmic) cotangent bundle $\P(\Omega^m_X)$ (or $\P(\Omega_X^m(\log D))$), using other Nevanlinna functions. This is an example of a so-called \emph{tautological inequality}.

This was achieved in both the compact and logarithmic cases for $M=\C$ first (McQuillan \cite{McQ98}, Vojta \cite{Voj00}), then $M=\C^m$ (Gasbarri-Pacienza-Rousseau \cite{GPR13}).

In this section, we generalise some of the results of \cite{GPR13} to general parabolic manifolds. More specifically, we are interested in extending the following.
\begin{Prop}[Theorem 3.3 in \cite{GPR13}]
    Let $X$ be a smooth projective variety, $D\subset X$ a simple normal crossings divisor, $A$ an ample line bundle on $X$ and $f:\C^m\to X$ a holomorphic mapping of maximal rank such that $f(\C^m)\not\subset D$. Set $X_1=\P\left(\bigwedge^m\clog{X}{D}\right)$ and $L=\O_{X_1}(1)$. Denote by $f_1$ the natural meromorphic lift $\C^m\to X_1$ of $f$.
    Then 
    \[T_{f_1,L}(r)\lex N^{[1]}_{f,D}(r)+O(\log T_{f,A}(r)).\]
\end{Prop}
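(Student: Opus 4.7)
The plan is to identify the pulled-back tautological line bundle $(f')^*\O_{X_1}(1)$ with the logarithmic canonical bundle $K_M(E):=\bigwedge^m\clog{M}{E}$ on $M$, where $E:=(f^{-1}D)_{\mathrm{red}}$, and then to reduce the desired estimate to Corollary \ref{thm:corollaire_logsing} applied to a well-chosen pseudo-metric on $M$.

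First, I would equip $\clog{X}{D}$ with a Hermitian metric $h_X$; taking its $m$-th exterior power and the associated Fubini--Study construction then yields a Hermitian metric $h_{FS}$ on $\O_{X_1}(1)$. The hypothesis that $f$ has generic maximal rank $m$ ensures that the natural sheaf morphism
\[\phi\colon f^*\bigwedge^m\clog{X}{D}\longrightarrow\bigwedge^m\clog{M}{E}=K_M(E)\]
is surjective outside a thin analytic set; its kernel determines the rational map $f'$, and the identification $(f')^*\O_{X_1}(1)\cong K_M(E)$ follows, with $(f')^*h_{FS}$ corresponding to the quotient metric inherited from $f^*\bigwedge^m h_X$.

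Next, I would set $\omega:=f^*\omega_{h_X}$, where $\omega_{h_X}$ is the $(1,1)$-form on $X\setminus D$ associated to the dual Hermitian metric on $TX(-\log D)$. By construction, $\omega$ has at most logarithmic singularities along $E$, and a local Gram-determinant computation in normal-crossings coordinates shows that the metric induced on $K_M(E)$ by the pseudo-volume form $\omega^m$ differs from the quotient metric $(f')^*h_{FS}$ only by a bounded positive factor. Passing to currents on $M$, one finds
\[2\pi\,(f')^*c_1\bigl(\O_{X_1}(1),h_{FS}\bigr)=[-\Ric\,\omega]\]
modulo a $dd^c$-exact correction whose contribution to the $T$-function is $O(1)$. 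Applying Corollary \ref{thm:corollaire_logsing} to this $\omega$ then gives, after dividing by $2\pi$, the bound
\[T_{f',\O_{X_1}(1)}(r)\;\lex\;N_E(r)-\tfrac{1}{2\pi}\Ric_\sigma(r)+O\bigl(\log T_{[\omega]}(r)+\log r\bigr).\]
Using the identification $N_E(r)=N_{f,D}^{[1]}(r)$ (since $D^{[1]}=\sum_i D_i=(f^{-1}D)_{\mathrm{red}}$ by the definition of the truncated counting function) and the comparison $T_{[\omega]}(r)\leq C\cdot T_{f,\omega}(r)+O(1)$ (via Lemma \ref{thm:lemme2} applied to the inclusion $\clog{X}{D}\subset\Omega_X^1\otimes\O_X(D)$), one absorbs the $\log r$ term into the dominant logarithmic error and obtains the claim.

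The main obstacle is the local integrability of $\omega\wedge\alpha^{m-1}$ near $E$, which is required by Proposition \ref{lemme1} and hence by Corollary \ref{thm:corollaire_logsing}. For a naively chosen smooth Hermitian metric on $\clog{X}{D}$, the pulled-back pseudo-metric has $1/|z_j|^2$-type singularities along $E$, for which $\omega\wedge\alpha^{m-1}$ fails to be locally integrable. The fix is to work instead with a Poincar\'e-type metric on $\clog{X}{D}$ (with singularities of order $1/(|z_j|\log|z_j|)^2$ that are locally integrable on $M$), and then to compare back to the smooth Fubini--Study metric on $\O_{X_1}(1)$ via Lemmas \ref{thm:lemme1} and \ref{thm:lemme2}, at the cost of an $O(\log T_{f,\omega}(r))$ error. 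A secondary but delicate point is the explicit identification of the quotient metric on $K_M(E)$ with the one induced by $\omega^m$, which relies on a careful computation exploiting the simple normal crossings structure of $D$ and the generic maximal rank of $f$.
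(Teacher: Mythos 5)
Your overall scheme is the same as the one the paper uses for Theorem \ref{thm:tauto} (of which the stated Proposition is the special case $M=\C^m$, where $\Ric_\sigma\equiv 0$): identify $2\pi(f')^{*}c_1(\O_{X_1}(1))$ with $[-\Ric\,\omega]$ for the pullback of a metric built from $\clog{X}{D}$, repair integrability with Poincar\'e-type factors, apply Corollary \ref{thm:corollaire_logsing}, and compare back to a reference metric. However, two steps are genuinely unjustified as written. First, the error coming from Corollary \ref{thm:corollaire_logsing} is $O(\log T_{[\omega]}+\log r)$, so you must prove $T_{[\omega]}(r)=O(T_{f,A}(r))$ for the pullback of the Poincar\'e-type metric. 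Your justification (``$T_{[\omega]}\le C\,T_{f,\omega}+O(1)$ via Lemma \ref{thm:lemme2} applied to the inclusion $\clog{X}{D}\subset\Omega_X^1\otimes\O_X(D)$'') does not apply: Lemma \ref{thm:lemme2} compares two metrics on a line bundle whose ratio has logarithmic growth, whereas here one must control the characteristic function of a $(1,1)$-form with $1/(|s_i|\log|s_i|)^2$-type singularities, which is not pointwise dominated by any form pulled back smoothly from $X$. The paper handles exactly this point through the identity
\[\frac{d|s_i|\wedge d^c|s_i|}{|s_i|^2(\log|s_i|^2)^2}=-\frac{1}{4\log(|s_i|^2)}dd^c\log(|s_i|^2)-\frac{1}{8}dd^c\log\bigl(\log(|s_i|^2)^2\bigr)\]
combined with Jensen's formula, which is what yields \eqref{eq:taut_2}; some explicit computation of this kind is unavoidable in your argument as well.

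Second, your comparison between the Poincar\'e-induced and smooth metrics on $\O_{X_1}(1)$ via Lemmas \ref{thm:lemme1} and \ref{thm:lemme2} is problematic: Lemma \ref{thm:lemme1} requires positivity of the curvature current of the singular metric, which you neither verify nor is clear here, and Lemma \ref{thm:lemme2}, applied to $f':M\dashrightarrow X_1$, gives an error $O(\log T_{f',A_1}(r))$ for an ample $A_1$ on $X_1$; every such $A_1$ involves $\O_{X_1}(1)$, so this error contains $\log T_{f',\O_{X_1}(1)}(r)$, the quantity being estimated, and absorbing it degrades the coefficient of $N^{[1]}_{f,D}$ from $1$ to $1+\ep$. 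To repair this you must either re-run the proof of Lemma \ref{thm:lemme2} noting that the relevant divisor on $X_1$ is $\pi^{*}D$, so the error is really $O(\log T_{f,\O_X(D)})=O(\log T_{f,A})$, or follow the paper: keep the smooth metric $\tilde h$ on $\O_{X_1}(-1)$ induced by $\omega_1^m$ and rescale only the pulled-back form, $\tilde\omega_s=(\log|s\circ f|^2)^{-2}\tilde\omega$, controlling $T_{[-\Ric\tilde\omega]}-T_{[-\Ric\tilde\omega_s]}$ by Jensen applied to $\log(\log(|s\circ f|^2)^2)$, which gives \eqref{eq:taut_1} with error $O(\log T_{f,A})$ and no circularity. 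Finally, a smaller imprecision: $(f')^{*}\O_{X_1}(1)$ is not isomorphic to $\bigwedge^m\clog{M}{E}$; there is only a nonzero morphism, degenerating where $\bigwedge^m df$ vanishes. Since only the curvature identity on $M\setminus(S\cup H)$ is used and the discrepancy is an effective divisor contributing with the favourable sign, this does not break the argument, but it should not be stated as an isomorphism.
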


\subsection{The parabolic setting}
Our situation is as follows. Let $X$ be a smooth $n$-dimensional projective variety, $D=D_1+...+D_c$ a simple normal crossing divisor. We consider the projectivisation 
\[X_1=\P\left(\bigwedge^m\clog{X}{D}\right)\] of the vector bundle of logarithmic $m$-forms on $X$. 

Let $(M,\sigma)$ be a parabolic manifold of dimension $m$ together with a holomorphic map $f:M\to X$ such that $f(M)\not\subset D$. Denote $H=(f^{-1}D)_{red}$.

Assume moreover that $m\le n$ and $f$ has maximal rank $m$, where we define the rank of a holomorphic map to be 
\[\rk f=\max\{\rk d_xf; x\in M\}.\]
The subset \[\Sigma=\{x\in M;\rk d_xf<m\}\] is analytic of codimension at least 2 (see e.g. \cite{AS71}, lemma 1.14). 

The morphism of sheaves 
\[f^\ast\bigwedge^m\clog{X}{D}\longrightarrow\bigwedge^m\clog{M}{H}\]
induces a rational map \[f':M\dashrightarrow \P\left(\bigwedge^m\clog{X}{D}\right)\]lifting $f$.
The meromorphic map $f'$ is well-defined and holomorphic outside an analytic subset $S$ of codimension at least $2$, which is the union of $\Sigma$ and the locus where $H$ is not normal crossings.

The restriction of $f'$ to $M\setminus (S\cup H)$ is simply given by differentiating $f$. In local coordinates $t_1,\dots,t_m$, we have 
\[f'|_{M\setminus (S\cup H)}(t_1,\dots,t_m)=\left(f(t_1,\dots,t_m),\left[\frac{\dr f}{\dr t_1}\w\cdots\w\frac{\dr f}{\dr t_m}\right]\right)\]

We shall prove the following estimation.
\begin{Theo}
\label{thm:tauto}
For any ample line bundle $A$ on $X$, one has
\[T_{f',\O_{X_1}(1)}(r)\lex N_{f,D}^{[1]}(r)-\frac{1}{2\pi}\Ric_\sigma(r)+O\left(\log T_{f,A}(r)\right).\]
\end{Theo}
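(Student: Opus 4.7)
The plan is to adapt the strategy of Theorem 3.3 of \cite{GPR13} (valid for $M = \C^m$) to the parabolic setting, substituting the parabolic Ricci estimate of Proposition \ref{lemme1} for the explicit logarithmic-derivative arguments that are specific to $\C^m$. The core of the approach is to identify $f'^*\O_{X_1}(1)$ with the logarithmic canonical line bundle $K_M(H) = \bigwedge^m\clog{M}{H}$ and to express its Chern current as $-\Ric[\Phi]/(2\pi) + [H]$ for a suitable pseudo-volume form $\Phi$ on $M$.

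First, on the open subset $M \setminus S$ the morphism $f^*\bigwedge^m\clog{X}{D} \to \bigwedge^m\clog{M}{H}$ is a surjection of vector bundles (since $f$ is a maximal-rank immersion and $H$ is SNC there), so it identifies $f'^*\O_{X_1}(1)$ with $K_M(H)$ on $M \setminus S$; this identification extends canonically to $M$ by Hartogs, as $\mathrm{codim}_M S \geq 2$. I then equip $E = \bigwedge^m\clog{X}{D}$ with a smooth Hermitian metric $h_E$, inducing a smooth quotient metric $h_1$ on $\O_{X_1}(1)$. Decomposing the pullback metric on $K_M(H)$ along $K_M(H) = K_M \otimes \O_M(H)$ using on $\O_M(H)$ the singular metric with $\|s_H\| \equiv 1$ (whose Chern curvature is $[H]$ by Lelong--Poincaré) yields a pseudo-metric on $K_M$ that corresponds to a pseudo-volume form $\Phi$ on $M$ with at most logarithmic singularities along $H$. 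A direct curvature computation then gives the current identity
\[
f'^*c_1(\O_{X_1}(1), h_1) = -\frac{1}{2\pi}\Ric[\Phi] + [H] \quad\text{on } M,
\]
which integrates to $T_{f',\O_{X_1}(1)}(r) = \frac{1}{2\pi}T_{-\Ric[\Phi]}(r) + N^{[1]}_{f, D}(r)$.

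It then remains to bound $T_{-\Ric[\Phi]}$ via the parabolic Ricci estimate. For this, I construct a pseudo-metric $\omega$ on $M \setminus H$ whose top power $\omega^m$ agrees with $\Phi$ up to a smooth positive factor (so that $\Ric[\omega] = \Ric[\Phi]$), with $T_{[\omega]}(r) = O(T_{f, A}(r))$. A natural construction uses a smooth Hermitian metric on the dual $(\clog{X}{D})^*$: pulling back by $f$ and restricting via the natural inclusion $(\clog{M}{H})^* \hookrightarrow f^*(\clog{X}{D})^*$ produces a pseudo-metric on $TM$ whose associated $(1,1)$-form has the expected logarithmic singularities along $H$, and the bound $T_{[\omega]}(r) = O(T_{f, A}(r))$ follows by comparison with $f^*\omega_X$ for $\omega_X$ a Kähler form on $X$, using Lemmas \ref{thm:lemme1}--\ref{thm:lemme2}. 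The hypotheses of Proposition \ref{lemme1} are then satisfied: the key plurisubharmonic-difference property of $\log(\omega^m/\Omega)$ follows from the observation that $\log\Gamma$ — the logarithm of the squared pointwise norm of the holomorphic pullback map $f^*\bigwedge^m\clog{X}{D} \to K_M(H)$ — is plurisubharmonic, since $\Gamma$ is locally a sum of squared absolute values of holomorphic functions. Proposition \ref{lemme1} then yields
\[
T_{-\Ric[\omega]}(r) \lex -\Ric_\sigma(r) + O(\log T_{f, A}(r)),
\]
and substitution into the previous identity gives the claimed inequality.

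The main technical obstacle is to construct the pseudo-metric $\omega$ satisfying all the required properties simultaneously: matching $\Phi$ up to a smooth positive factor, having the correct logarithmic structure along $H$, having a characteristic function controlled by $T_{f, A}$, and possessing a log-density that is locally a difference of plurisubharmonic functions. The situation is reminiscent of the Schwarz-type upper bound \eqref{eq:maj} from Section \ref{sec:smt}, though here no Schwarz lemma is available a priori and the construction must instead leverage the intrinsic geometry of the logarithmic (co)tangent bundles together with the plurisubharmonicity of $\log\Gamma$.
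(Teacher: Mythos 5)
Your proposal follows the paper's general architecture (pull back a metric adapted to the logarithmic structure, rewrite $T_{f',\O_{X_1}(1)}$ through a Ricci current, and feed it into Proposition \ref{lemme1} / Corollary \ref{thm:corollaire_logsing}), but the step you yourself flag as ``the main technical obstacle'' is exactly where the argument breaks, and it is not a removable detail. The pseudo-metric you propose --- restrict a smooth metric on $f^*(\clog{X}{D})^*$ through the inclusion $(\clog{M}{H})^*=T_M(-\log H)\hookrightarrow f^*T_X(-\log D)$ --- degenerates like $|s\circ f|^{-2}$ transversally to $H$: in coordinates with $H=\{t_1=0\}$ one gets $\Vert\partial_{t_1}\Vert^2\sim|t_1|^{-2}$, so the associated $(1,1)$-form behaves like $i\,dt_1\wedge d\bar t_1/|t_1|^2$, which is \emph{not} locally integrable along $H$. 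Consequently $T_{[\omega]}$ is not defined, the hypothesis of Proposition \ref{lemme1} that $\omega\wedge\alpha^{m-1}$ be locally integrable fails, and the comparison with $f^*\omega_X$ cannot hold ($\omega$ is unbounded against any smooth form), so the claimed bound $T_{[\omega]}(r)=O(T_{f,A}(r))$ is false for this construction. Since, as you note, no Schwarz-type lemma is available to dominate such an $\omega$, nothing in the proposal replaces this missing control, and the final substitution into the current identity cannot be carried out.

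The paper's proof supplies precisely the missing device. It pulls back the Poincar\'e-type form $\omega_1=\omega_A+\sum_i d|s_i|_{\kappa_i}\wedge d^c|s_i|_{\kappa_i}/|s_i|_{\kappa_i}^2$ (a smooth metric on $T_X(-\log D)$), identifies $-\Ric(f^*\omega_1)$ with $2\pi(f')^*c_1(\O_{X_1}(1),\tilde h^{-1})$ off $S\cup H$, and then \emph{rescales} the pulled-back form by $(\log|s\circ f|^2)^{-2}$. The rescaled form $\tilde\omega_s$ has at most logarithmic singularities along $H$, so Corollary \ref{thm:corollaire_logsing} applies to it; its characteristic function really is $O(T_{f,A})$, because $d|s_i|\wedge d^c|s_i|/\bigl(|s_i|^2(\log|s_i|^2)^2\bigr)$ is expressed through $dd^c$ of potentials controlled by the First Main Theorem; and the discrepancy $-\Ric\tilde\omega_s+\Ric\tilde\omega=2\pi m\,dd^c\log\bigl(\log(|s|^2)^2\bigr)$ contributes only $O(\log T_{f,A}+\log r)$ by Jensen's formula. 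Without this taming of the $|s|^{-2}$ singularity (or an equivalent substitute), your identity $f'^*c_1(\O_{X_1}(1),h_1)=-\frac{1}{2\pi}\Ric[\Phi]+[H]$ has no admissible $\omega$ to pair with Proposition \ref{lemme1}. A secondary point: the identification $f'^*\O_{X_1}(1)\cong K_M\otimes\O_M(H)$ is only valid up to the divisorial part of the vanishing locus of the top logarithmic differential; the correction is effective and works in your favour, but it must be accounted for rather than asserted via surjectivity on $M\setminus S$.
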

\begin{proof}
\normalsize
Let $A$ be an ample line bundle on $X$ and $\omega_A$ the positive curvature form of an appropriate hermitian metric on $A$.

For any $i=1,\dots,c$, let $s_i$ be a holomorphic section of $\O_X(D_i)$ defining $D_i$, and endow each $\O_X(D_i)$ with a smooth hermitian metric $\kappa_i$ such that $|s_i|_{\kappa_i}<1$. Denote $s=\prod_{i=1}^cs_i$ and $|s|=\prod_i |s_i|_{\kappa_i}$.

We can define a smooth metric on $T_X(-\log D)$ by the singular $(1,1)$-form
\[\omega_1=\omega_A+\sum_{1\leq i\leq c}\frac{d|s_i|_{\kappa_i}\w d^c| s_i|_{\kappa_i}}{| s_i|_{\kappa_i}^2}.\]

The form $\omega_1^m$ induces a smooth metric $\tilde{h}$ on $\O_{X_1}(-1)$. 
Let $\tilde{\omega}$ be the smooth pseudo-metric $f^\ast\omega_1$ on $M\setminus (S\cup H)$, with logarithmic singularities along $H$. Moreover, the form $\log(\tilde{\omega}^m)$ is locally integrable along $H$: writing $\omega_1^m=\lambda(z)\prod_{j=1}^nidz_j\w d\bar{z}_j$ in some local coordinates $z_1,\dots,z_n$, the function $\log(|s|^2\lambda)$ is locally bounded and $\log(|s|^2)$ is plurisubharmonic. Hence $\log(\lambda)=\log(|s|^2)-\log(|s|^2\lambda)$ is locally integrable along $D$.

By definition, we have \[-\Ric \tilde{\omega}=2\pi(f')^\ast c_1(\O_{X_1}(1),\tilde{h}^{-1}),\]
on $M\setminus (S\cup H)$, so that 
\[2\pi T_{f',\O_{X_1}(1)}=T_{[-\Ric \tilde{\omega}]}.\]
Define $\tilde{\omega}_s=\frac{1}{(\log|s\circ f|^2)^2}\tilde{\omega}$. 
It is clear that $\tilde{\omega}_s$ has at most logarithmic singularities along $H$. Thus we may apply \ref{thm:corollaire_logsing}:
\[T_{[-\Ric\tilde{\omega}_s]}\lex2\pi N_H(r)-\Ric_\sigma(r)+O(\log T_{[\tilde{\omega}_s]}+\log r).\]
Since $N_H(r)=N_{f,D}^{[1]}(r)$, it suffices to prove the following inequalities.
\begin{align}
 \label{eq:taut_1}   T_{[-\Ric\tilde{\omega}]}&\leq T_{[-\Ric\tilde{\omega}_s]}+O(\log r+\log T_{f,A})\\
  \label{eq:taut_2}   \log T_{[\tilde{\omega}_s]}&\leq O(\log T_{f,A})
\end{align}
Let us first establish \eqref{eq:taut_1}. Over $M\setminus(H\cup S)$, one has
\begin{equation}
\label{eq:comparaison}
-\Ric\tilde{\omega}_s=-\Ric\tilde{\omega}-2\pi dd^c\log(\log(|s|^2)^{2m})=-\Ric\tilde{\omega}-2\pi mdd^c\log(\log(|s|^2)^{2}).
\end{equation}
Recall that we have chosen the metrics on $\O(D_i)$ such that $|s|<1$. Hence 
\[\log(\log(|s|^2)^2)=2\log(-\log(|s|^2)).\]
Because of the moderate singularities of $\log(-\log(|s|^2))$, it is readily verified that we can apply Jensen's formula to this function. This gives
\begin{align*}
T_{[dd^c\log(\log(|s|^2)^2)]}(r)&=\int_{M\langle r\rangle}\log(\log(|s|^2)^2)d\mu_r+O(1)\\
&\leq 2\log\left(\int_{M\langle r\rangle}-\log(|s|^2)^2)d\mu_r\right)+O(1)\\
&\leq 2\log(-T_{[dd^c\log|s|^2]}(r))+O(1)\\
&\leq 2\log\left(-\sum_{i=1}^cN_{f,D_i}(r)+T_{f,\O_X(D)}(r)\right)+O(1)\\
&\leq 2\log T_{f,\O_X(D)}(r)+O(1)\leq O(\log r+\log T_{f,A})
\end{align*}
Therefore, integrating \eqref{eq:comparaison} implies \eqref{eq:taut_1}.

By definition,
\begin{align*}
\tilde{\omega}_s&=\frac{1}{(\log|s\circ f|^2)^2}f^\ast\omega_A+f^\ast\sum_{i=1}^c\frac{d|s_i|_{\kappa_i}\w d^c|s_i|_{\kappa_i}}{|s_i|_{\kappa_i}^2(\log|s_i|_{\kappa_i}^2)^2}\\
&\leq f^\ast\omega_A+f^\ast\sum_{i=1}^c\frac{d|s_i|_{\kappa_i}\w d^c|s_i|_{\kappa_i}}{|s_i|_{\kappa_i}^2(\log|s_i|_{\kappa_i}^2)^2}
\end{align*}

By a straightforward computation, one has
\[\frac{d|s_i|_{\kappa_i}\w d^c|s_i|_{\kappa_i}}{|s_i|_{\kappa_i}^2(\log|s_i|_{\kappa_i}^2)^2}=-\frac{1}{4\log(|s_i|_{\kappa_i}^2)}dd^c\log(|s_i|_{\kappa_i}^2)-\frac{1}{8}dd^c\log(\log(|s_i|_{\kappa_i}^2)^2).\]
Integrating this relation yields 
\[T_{\left[f^\ast\frac{d|s_i|_{\kappa_i}\w d^c|s_i|_{\kappa_i}}{|s_i|_{\kappa_i}^2(\log|s_i|_{\kappa_i}^2)^2}\right]}\leq O(T_{f,\O_X(D_i)}+T_{f,A})\leq O(T_{f,A}),\]
and therefore
\[ \log T_{[\tilde{\omega}_s]}\leq O(\log T_{f,A}).\]
\end{proof}

\bibliography{bib}{}
\bibliographystyle{plain}
\end{document}